\renewcommand{\epsilon}{{\varepsilon}}
\numberwithin{equation}{section}
\newtheorem{theorem}{Theorem}[section]
\newtheorem{lemma}[theorem]{Lemma}
\newtheorem{remark}[theorem]{Remark}
\newtheorem{proposition}[theorem]{Proposition}
\title[Inhomogeneous Gross-Pitaevskii  equation]
{Some qualitative studies of the focusing inhomogeneous Gross-Pitaevskii  equation}
\author[A. H. Ardila]{Alex H. Ardila}
\address[A. H. Ardila]{ICEx, Universidade Federal de Minas Gerais, Av. Antonio Carlos, 6627, 
Caixa Postal 702, 30123-970, Belo Horizonte-MG, Brazil}
\email{ardila@impa.br}
\author[V. D. Dinh]{Van Duong Dinh}
\address[V. D. Dinh]{Institut de Math\'ematiques de Toulouse UMR5219, Universit\'e Toulouse CNRS, 31062 Toulouse Cedex 9, France and Department of Mathematics, HCMC University of Pedagogy, 280 An Duong Vuong, Ho Chi Minh, Vietnam}
\email{dinhvan.duong@math.univ-toulouse.fr}
\subjclass[2010]{35Q55; 35Q40}
\keywords{Inhomogeneous NLS, ground states, stability, instability, blow up}
\begin{document}

\begin{abstract}
We study the Cauchy problem for an inhomogeneous  Gross-Pitaevskii  equation. We first derive a sharp threshold for global existence and blow up of the solution. Then we construct and classify finite time blow up solutions at the minimal mass threshold.  Additionally, using variational techniques, we study the existence, the orbital stability and instability of standing waves.
\end{abstract}

\maketitle

\section{Introduction}
\label{S:0}
 In this paper, we give some results concerning the Cauchy problem and the dynamics for an nonlinear inhomogeneous  Gross-Pitaevskii  equation in the following form:
\begin{equation}\label{GP}
 \begin{cases} 
 i\partial_{t}u+\Delta u-\gamma^{2}|x|^{2}u+ |x|^{-b}|u|^{p-1}u=0,\\
u(x,0)=u_{0},
\end{cases} 
\end{equation}
where $\gamma>0$,  $u=u(x,t)$ is a complex-valued function of $(x,t)\in \mathbb{R}^{N}\times\mathbb{R}$, $N\geq 1$, $0<b<\min\left\{2, N\right\}$ and $1<p<2^{\circ}$. Here, $2^{\circ}$ is defined by ${2}^{\circ}=1+\frac{4-2b}{N-2}$ if $N\geq 3$, and $2^{\circ}=\infty$ if $N=1$, $2$.

The Schr\"odinger equation \eqref{GP} is a model from various physical contexts in the description of nonlinear
waves such as propagation of a laser beam in the optical fiber. In particular, it models the Bose-Einstein condensates with the attractive interparticle interactions under a magnetic trap. The operator $-\gamma^{2}|x|^{2}$ is the isotropic harmonic potential modelling a magnetic field whose role is to confine the movement of particles.  The inhomogeneous nonlinearity $|x|^{-b}|u|^{p-1}u$ describes the attractive interaction between particles. When $b>0$, it can be thought of as modeling inhomogeneities in the medium in which the wave propagates;
we refer the readers to \cite{AGR,BP} for more information on the related physical backgrounds. In recent years, this type
of equations has attracted attention of numerous researchers due to their significance in theory and applications, see \cite{JC,CGO,  FG1, FGCS, SZ, TSaa, ABRF, VCGG, Dinh}.

In the absence of the harmonic potential, i.e., \eqref{GP} with $\gamma=0$, we refer the reader to \cite{LGF, FG1, FGCS, MGM, SZ, ABRF, VCGG, Dinh} for more information.  In the classical case $b=0$, many authors have been studying the problem of stability of standing waves, see \cite{Carles2002, FOG, STW, Fukui2000, JZ2000, JZ2SS005}. On the other hand, if $\gamma>0$ and $b<0$  the problem \eqref{GP} was treated in \cite{JC, CGO, Luo2018, Chen2010, Saanouni2015}. If $\gamma>0$ and $b>0$, to the best of our knowledge, there are no results concerning the Cauchy problem and the dynamics for \eqref{GP}.

By \cite[Appendix K]{FGCS} and \cite[Theorem 9.2.6]{CB} we can get the time local well-posedness for the Cauchy problem to \eqref{GP} in the space 
\begin{equation*} 
\Sigma(\mathbb{R}^{N}):=\left\{u\in H^{1}(\mathbb{R}^{N}):|x|^{}u\in L^{2}(\mathbb{R}^{N}) \right\},
\end{equation*}
equipped with the norm 
\begin{equation*}
\|u\|^{2}_{\Sigma}=\int_{\mathbb{R}^{N}}\left(\left|\nabla u\right|^{2}+|x|^{2}|u|^{2}\right)dx.
\end{equation*}
More precisely, we have the following proposition.
\begin{proposition}\label{WP} 
For every $u_{0}\in \Sigma(\mathbb{R}^{N})$ there exists a unique maximal solution of Cauchy problem \eqref{GP}, $T\in(0, \infty]$, such that $u(0)=u_{0}$ and $u\in C([0,T), \Sigma(\mathbb{R}^{N}))$.  If $T=\infty$, $u$ is called a global solution. If $T<\infty$,  $u$ is called blow-up in finite time and $\lim_{t\rightarrow T} \|\nabla u(t)\|^{2}_{L^{2}}=\infty$. Moreover, we have the conservation of energy and charge: for every $t\in [0,T)$,
\begin{equation*}
E(u(t))=E(u_{0}) \quad and \quad \|u(t)\|^{2}_{L^{2}}=\|u_{0}\|^{2}_{L^{2}},
\end{equation*}
where
\begin{equation}\label{Es}
E(u)=\frac{1}{2}\int_{\mathbb{R}^{N}}|\nabla u|^{2}dx+\frac{\gamma^{2}}{2}\int_{\mathbb{R}^{N}}|x|^{2}|u|^{2}dx-\frac{1}{p+1}\int_{\mathbb{R}^{N}}|x|^{-b}|u|^{p+1}dx.
\end{equation}
\end{proposition}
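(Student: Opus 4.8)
The plan is to solve the integral (Duhamel) formulation of \eqref{GP} by a contraction mapping argument in the energy space $\Sigma(\mathbb{R}^N)$, combining the harmonic-oscillator Strichartz theory of \cite[Theorem 9.2.6]{CB} with the inhomogeneous nonlinear estimates of \cite[Appendix K]{FGCS}. First I would introduce the propagator $S(t)=e^{it(\Delta-\gamma^2|x|^2)}$ associated with the harmonic oscillator $H=-\Delta+\gamma^2|x|^2$. This is a one-parameter unitary group on $L^2(\mathbb{R}^N)$ which, by Mehler's formula (equivalently, through the lens transform relating $S(t)$ to the free Schr\"odinger group), satisfies the same local-in-time dispersive decay and hence the same Strichartz estimates as $e^{it\Delta}$ on every bounded interval. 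Moreover $H$ interacts in a controlled way with the operators defining $\Sigma$, so that $S(t)$ maps $\Sigma$ into $\Sigma$ with norm bounded on compact time intervals; this is the linear input I would borrow directly from \cite{CB}.

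Second, writing $u(t)=S(t)u_0+i\int_0^t S(t-s)\,F(u(s))\,ds$ with $F(u)=|x|^{-b}|u|^{p-1}u$, I would run a fixed point in a ball of $X_T=C([0,T];\Sigma)\cap L^q([0,T];W^{1,r})$ for an admissible pair $(q,r)$, with the weight $|x|$ controlled in a parallel norm. The crux is to estimate $F(u)$, its gradient, and $|x|F(u)$ in the relevant dual-Strichartz norms. Here I would split $\mathbb{R}^N=\{|x|\le 1\}\cup\{|x|>1\}$: on $\{|x|>1\}$ one has $|x|^{-b}\le 1$ and the estimates reduce to the usual power-nonlinearity bounds, while on $\{|x|\le 1\}$ the singular factor $|x|^{-b}$ is absorbed by H\"older's inequality together with Sobolev embedding, using $0<b<\min\{2,N\}$ and the energy-subcriticality $1<p<2^{\circ}$. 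The derivative $\nabla F$ produces the more singular weight $|x|^{-b-1}$, which I would control through Hardy's inequality; the constraint $b<\min\{2,N\}$ is exactly what keeps these weighted quantities integrable. This nonlinear analysis, transplanted from \cite[Appendix K]{FGCS}, closes the contraction for $T$ small depending only on $\|u_0\|_\Sigma$ and yields existence, uniqueness, and continuous dependence, together with the blow-up alternative that either $T=\infty$ or $\|u(t)\|_\Sigma\to\infty$ as $t\to T$.

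Third, the conservation laws follow by standard regularization: charge conservation is inherited from the $L^2$-unitarity of $S(t)$, and energy conservation is obtained by pairing the equation with $\partial_t\bar u$ and integrating, first for smooth approximations and then passing to the limit by density in $\Sigma$. To sharpen the blow-up criterion from $\|u(t)\|_\Sigma\to\infty$ to $\|\nabla u(t)\|_{L^2}^2\to\infty$, I would invoke the inhomogeneous Gagliardo--Nirenberg inequality $\int|x|^{-b}|u|^{p+1}\,dx\le C\|\nabla u\|_{L^2}^{\alpha}\|u\|_{L^2}^{\beta}$ valid for $1<p<2^{\circ}$. If $\|\nabla u(t_n)\|_{L^2}$ stayed bounded along some sequence $t_n\to T<\infty$, then energy conservation rearranged as $\tfrac{\gamma^2}{2}\||x|u\|_{L^2}^2=E(u_0)-\tfrac12\|\nabla u\|_{L^2}^2+\tfrac{1}{p+1}\int|x|^{-b}|u|^{p+1}$, combined with charge conservation and this inequality, would force $\||x|u(t_n)\|_{L^2}$ to remain bounded, hence $\|u(t_n)\|_\Sigma$ bounded, contradicting the $\Sigma$-blow-up alternative. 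Therefore $\|\nabla u(t)\|_{L^2}^2\to\infty$.

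I expect the main obstacle to be the interplay between the origin singularity of the nonlinearity and the weighted structure of $\Sigma$: uniformly controlling $\nabla F(u)$ and $|x|F(u)$ near $x=0$ requires delicate region-splitting and Hardy/Sobolev estimates, and one must check that these weighted bounds are compatible with the harmonic-oscillator Strichartz norms rather than merely the free ones. The conservation laws, though morally immediate, also demand a careful density argument since the solution is a priori only of $\Sigma$-regularity.
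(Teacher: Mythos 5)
Your overall plan (Duhamel formula, harmonic-oscillator Strichartz estimates via Mehler's formula, contraction in a Strichartz-augmented subspace of $C([0,T];\Sigma)$) is a legitimate strategy, but it does not prove the proposition in the stated generality, and the failure occurs exactly at the step you yourself flag as delicate. In the fixed-point argument you must estimate $\nabla F(u)$, $F(u)=|x|^{-b}|u|^{p-1}u$, in dual Strichartz norms; differentiating the weight produces the term $b\,|x|^{-b-1}|u|^{p}$. To place this in any $L^{r'}$ near the origin by H\"older you need $|x|^{-(b+1)}\in L^{\gamma}(\{|x|\le 1\})$ for some $\gamma\ge r'\ge 1$, which forces $b+1<N$; moreover the Hardy inequality $\| |x|^{-1}u\|_{L^2}\lesssim \|\nabla u\|_{L^2}$ that you invoke holds only for $N\ge 3$. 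Consequently your scheme cannot reach $N=1$ (where the proposition allows $0<b<1$) nor $N=2$ with $1\le b<2$, both of which are covered by the statement ($N\ge 1$, $0<b<\min\{2,N\}$, $1<p<2^{\circ}$). This is not a removable technicality of your write-up: Strichartz-based local theory for the inhomogeneous nonlinearity is known to require strictly stronger hypotheses on $(N,b)$ than $0<b<\min\{2,N\}$ (compare \cite{MGM}), which is precisely why the local theory invoked here is not built this way.

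The proof the paper relies on is different in kind: it cites Cazenave's treatment of NLS with harmonic potential in $\Sigma$ \cite[Theorem 9.2.6]{CB} combined with Genoud--Stuart's verification \cite[Appendix K]{FGCS} that the nonlinearity $|x|^{-b}|u|^{p-1}u$ fits Cazenave's abstract energy/compactness method. That method never differentiates the nonlinearity: it only requires boundedness and suitable continuity of $u\mapsto |x|^{-b}|u|^{p-1}u$ from the energy space into its dual, together with the variational (energy) structure, and it therefore covers the full range $N\ge 1$, $0<b<\min\{2,N\}$, $1<p<2^{\circ}$, yielding at the same time uniqueness, the conservation laws, and the blow-up alternative. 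For what it is worth, your last two steps are sound: the regularization argument for the conservation laws, and the upgrade of the alternative from $\|u(t)\|_{\Sigma}\to\infty$ to $\|\nabla u(t)\|_{L^2}^2\to\infty$ using \eqref{GNI} together with conservation of mass and energy (boundedness of $\|\nabla u(t_n)\|_{L^2}$ along a sequence would bound $\||x|u(t_n)\|_{L^2}$, hence the $\Sigma$-norm, contradicting maximality). The gap is confined to, but essential in, the local existence scheme itself.
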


We remark that if $1<p<1+\frac{4-2b}{N}$, then we have the global existence of Cauchy problem \eqref{GP} in $\Sigma(\mathbb{R}^{N})$. Indeed, let $u$ be a solution of \eqref{GP} as in Proposition \ref{WP}. From Gagliardo-Nirenberg inequality ( see \cite{FGCS, LGF}) 
\begin{equation}\label{GNI}
\int_{\mathbb{R}^{N}}|x|^{-b}|u|^{p+1}dx\leq C\|\nabla u\|^{\frac{N(p-1)}{2}+b}_{L^{2}}\|u\|^{p+1-\frac{N(p-1)}{2}-b}_{L^{2}}, 
\end{equation} 
we have that 
\begin{equation*}
E(u(t))\geq \|\nabla u(t)\|^{2}_{L^{2}}\left(\frac{1}{2}-C\|\nabla u(t)\|^{\frac{N(p-1)}{2}+b-2}_{L^{2}}\|u(t)\|^{p+1-\frac{N(p-1)}{2}-b}_{L^{2}}\right).
\end{equation*} 
Since $1<p<1+\frac{4-2b}{N}$, in view of the conservation of energy and charge, we see that $\|\nabla u(t)\|^{2}_{L^{2}}$ is bounded; that is,  \eqref{GP} is globally well-posed.

On the other hand, assume that $p\geq 1+\frac{4-2b}{N}$ and let $u_{0}\in \Sigma(\mathbb{R}^{N})$. From Lemma \ref{L1} below we see that if $E(u_{0})<0$, then the solution $u$ of the Cauchy problem \eqref{GP} corresponding to $u_{0}$ blows up in finite time.

In the case $p=1+\frac{4-2b}{N}$, we are motived to investigate a sharp sufficient conditions of global existence to the solutions of the Cauchy problem \eqref{GP}. Let $Q$ be denote the unique (up to symmetries) positive radial solution of the following elliptic equation (see \cite{ABRF, FG1})
\begin{equation}\label{Izx}
-\Delta Q+Q- |x|^{-b}|Q|^{\frac{4-2b}{N}}Q=0.
\end{equation} 
From \cite{FG1}, we have that $\frac{N}{2+N-b}\|Q\|^{\frac{4-2b}{N}}_{L^{2}}$ is the minimum of Weinstein functional
\begin{equation*}
J(u)=\left[\|\nabla u\|^{2}_{L^{2}}\| u\|^{\frac{4-2b}{N}}_{L^{2}}\right]\div \int_{\mathbb{R}^{N}}|x|^{-b}|u|^{\frac{4-2b}{N}+2}dx.
\end{equation*} 
Following the argument of Zhang \cite{ZEc}, we have

\begin{theorem}\label{SC}
Let $p=1+\frac{4-2b}{N}$. Assume that $u_{0}\in \Sigma(\mathbb{R}^{N})$.\\
(i) If $u_{0}$ satisfies  $\|u_{0}\|_{L^{2}}< \|Q\|_{L^{2}}$, then the corresponding solution $u(x,t)$ of the Cauchy problem \eqref{GP} given in Proposition \ref{WP} exists globally in the time.\\
(ii) For arbitrary positive $\lambda$ and complex number $c$  satisfying $|c|\geq1$,  if we take initial data $u_{0}=c\lambda^{\frac{N}{2}} Q(\lambda x)$, then $\|u_{0}\|_{L^{2}}\geq\|Q\|_{L^{2}}$ and the corresponding solution $u(x,t)$ of the Cauchy problem \eqref{GP} blows up in finite time.
\end{theorem}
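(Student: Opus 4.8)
The plan is to handle part (i) by the sharp Gagliardo--Nirenberg inequality together with the conservation laws, and part (ii) by a variance (virial) argument adapted to the harmonic confinement, following the scheme of Zhang \cite{ZEc}.

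For part (i) I would first rewrite the sharp constant in \eqref{GNI} through $Q$. Since $\frac{N}{2+N-b}\|Q\|^{\frac{4-2b}{N}}_{L^2}$ is the minimum of the Weinstein functional $J$, every $u\in\Sigma(\mathbb{R}^N)$ obeys
\[
\int_{\mathbb{R}^N}|x|^{-b}|u|^{p+1}\,dx\le\frac{2+N-b}{N}\left(\frac{\|u\|_{L^2}}{\|Q\|_{L^2}}\right)^{\frac{4-2b}{N}}\|\nabla u\|^2_{L^2}.
\]
Because $p+1=\tfrac{2(N+2-b)}{N}$, one checks that $\tfrac{1}{p+1}\cdot\tfrac{2+N-b}{N}=\tfrac12$, so inserting this bound into \eqref{Es} yields
\[
E(u)\ge\frac12\left[1-\left(\frac{\|u\|_{L^2}}{\|Q\|_{L^2}}\right)^{\frac{4-2b}{N}}\right]\|\nabla u\|^2_{L^2}+\frac{\gamma^2}{2}\int_{\mathbb{R}^N}|x|^2|u|^2\,dx.
\]
By the conservation of mass and energy (Proposition \ref{WP}) and the hypothesis $\|u_0\|_{L^2}<\|Q\|_{L^2}$, the bracket is a strictly positive constant independent of $t$; hence $\|\nabla u(t)\|^2_{L^2}$ is bounded uniformly on $[0,T)$ by $2E(u_0)$ divided by that constant. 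The blow-up alternative in Proposition \ref{WP} then forces $T=\infty$.

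For part (ii) the mass identity is immediate: the scaling $y=\lambda x$ gives $\|u_0\|^2_{L^2}=|c|^2\|Q\|^2_{L^2}\ge\|Q\|^2_{L^2}$. The core object is the variance $V(t):=\int_{\mathbb{R}^N}|x|^2|u(t)|^2\,dx$. I would establish $V'(t)=4\,\mathrm{Im}\int_{\mathbb{R}^N}\bar u\,(x\cdot\nabla u)\,dx$ and, using the equation together with the critical relation $N(p-1)+2b=4$ (so that the nonlinear virial coefficient $\tfrac{4[N(p-1)+2b]}{p+1}$ coincides with the coefficient $\tfrac{16}{p+1}$ occurring in $16E$), the identity
\[
V''(t)=8\|\nabla u\|^2_{L^2}-\frac{16}{p+1}\int_{\mathbb{R}^N}|x|^{-b}|u|^{p+1}\,dx-8\gamma^2V(t)=16E(u_0)-16\gamma^2V(t).
\]
This linear ODE integrates to $V(t)=\gamma^{-2}E(u_0)+\bigl(V(0)-\gamma^{-2}E(u_0)\bigr)\cos(4\gamma t)$, where $V'(0)=0$ because $Q$ is real-valued, so $\mathrm{Im}\int\bar u_0(x\cdot\nabla u_0)\,dx=0$. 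It then remains to show that $V$ is forced to vanish in finite time. Computing the three terms of $E(u_0)$ by the scaling $y=\lambda x$, the harmonic part equals exactly $\frac{\gamma^2}{2}V(0)$, and combining the Nehari identity $\|\nabla Q\|^2_{L^2}+\|Q\|^2_{L^2}=\int|x|^{-b}Q^{p+1}\,dx$ for \eqref{Izx} with its Pohozaev identity, which gives $\|\nabla Q\|^2_{L^2}=\tfrac{N}{2-b}\|Q\|^2_{L^2}$, one obtains
\[
E(u_0)-\frac{\gamma^2}{2}V(0)=\lambda^2\left[\frac{|c|^2}{2}\|\nabla Q\|^2_{L^2}-\frac{|c|^{p+1}}{p+1}\bigl(\|\nabla Q\|^2_{L^2}+\|Q\|^2_{L^2}\bigr)\right]=\frac{\lambda^2\|\nabla Q\|^2_{L^2}}{2}\,|c|^2\bigl(1-|c|^{p-1}\bigr)\le0,
\]
with equality precisely when $|c|=1$. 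Hence $V(0)\ge 2\gamma^{-2}E(u_0)$, so the minimum of $V$, namely $2\gamma^{-2}E(u_0)-V(0)\le0$, is reached at $t=\tfrac{\pi}{4\gamma}$. Since $V(t)>0$ whenever $u(t)\in\Sigma(\mathbb{R}^N)$ with $\|u(t)\|_{L^2}=\|u_0\|_{L^2}>0$ (by the uncertainty inequality $\|u\|^2_{L^2}\le\tfrac{2}{N}\|\,|x|u\|_{L^2}\|\nabla u\|_{L^2}$, which forces $\|\nabla u(t)\|_{L^2}\to\infty$ as $V(t)\to0$), the solution cannot survive up to $t=\tfrac{\pi}{4\gamma}$; thus $T<\infty$ and $u$ blows up. For $E(u_0)\le0$ one may alternatively invoke Lemma \ref{L1}, and the computation above shows the two regimes match.

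The step I expect to be the main obstacle is the rigorous justification of the virial identity for merely $\Sigma$-regular solutions: the formal differentiation of $V$ presupposes decay and regularity that the solution need not have, so one must carry out the computation on smooth, spatially truncated approximations and pass to the limit, the delicate part being the control of the singular weight $|x|^{-b}$ near the origin in the nonlinear term. The remaining subtle point is the exact algebra pinning $|c|=1$ as the sharp borderline, which rests entirely on the Pohozaev relation $\|\nabla Q\|^2_{L^2}=\tfrac{N}{2-b}\|Q\|^2_{L^2}$ for the inhomogeneous ground state.
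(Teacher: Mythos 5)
Your proposal is correct and follows essentially the same route as the paper: part (i) is the identical sharp Gagliardo--Nirenberg plus conservation-law argument, and part (ii) is the same virial computation leading to $f''=16E(u_0)-16\gamma^2 f$ and the threshold $f(0)\geq 2\gamma^{-2}E(u_0)$ (the paper's Lemma \ref{L2}), with the energy of the rescaled data evaluated through the Pohozaev-type identity \eqref{Pi}. The only cosmetic differences are that you exploit $V'(0)=0$ to solve the ODE explicitly with a cosine, where the paper keeps $f'(0)$ general, and that you reach \eqref{Pi} via the Nehari identity combined with $\|\nabla Q\|^2_{L^2}=\tfrac{N}{2-b}\|Q\|^2_{L^2}$ rather than quoting it directly.
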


Notice that if $1<p<1+\frac{4-2b}{N}$, then we have global well-posedness of the Cauchy problem \eqref{GP}. On the other hand, from Theorem \ref{SC}, when $p= 1+\frac{4-2b}{N}$,  all solutions with a mass strictly below that $\|Q\|_{L^{2}}$ are global. If the mass is greater than or equal to $\|Q\|_{L^{2}}$ there are collapse solutions to exists for Eq. \eqref{GP}. So, in the case $p= 1+\frac{4-2b}{N}$, we call $\|Q\|_{L^{2}}$ the critical mass for \eqref{GP}. 

Let us consider the function
\begin{equation}\label{AX}
S_{\beta, \theta_{0}}(x, t)=e^{i\theta_{0}}e^{i\frac{\beta^{2}}{t}}e^{-i\frac{|x|^{2}}{4t}}\left(\frac{\beta}{t}\right)^{\frac{N}{2}}Q \left(\frac{\beta x}{t}\right),
\end{equation} 
where $t>0$, $\beta$, $\theta_{0}\in \mathbb{R}$  and $Q$ is defined by \eqref{Izx}.

In the next result,  inspired by the work of R. Carles \cite{RCIH},  we classify finite time blow-up solutions at the minimal mass threshold.

\begin{theorem}\label{CMm23}
Let $p=1+\frac{4-2b}{N}$ and $\gamma>0$. Assume that $u$ is  a critical mass solution of \eqref{GP} which blows up in finite time $0<T< \frac{\pi}{4\gamma}$, that is, $\|u_{0}\|_{L^{2}}=\|Q\|_{L^{2}}$ and $\lim_{t\rightarrow T}\|\nabla u(t)\|_{L^{2}}=+\infty$. \\
Then there exist $\theta_{0}\in \mathbb{R}$ and $\lambda_{0}>0$ such that 
\begin{equation*}
u(t)=\left(\frac{1}{\mathrm{cos}\,2\gamma t}\right)^{\frac{N}{2}}e^{-i\frac{\gamma}{2}|x|^{2}\mathrm{tan}\,2\gamma t}S_{{\lambda_{0}}, \theta_{0}}\left(\frac{x}{\mathrm{cos}\,2\gamma t}, \frac{\mathrm{sin}\,2\gamma(T-t)}{2\gamma\mathrm{cos}\,2\gamma T\,\, \mathrm{cos}\,2\gamma t}\right)
\end{equation*} 
for every $t\in [0,T)$, where $S_{\lambda_{0}, \theta_{0}}$ is defined in \eqref{AX}. In particular, with the change of variable $\beta_{0}=\lambda_{0}\,\mathrm{cos}\,2\gamma T$, we see that the initial data is of the form
\begin{equation*}
u_{0}(x)=e^{ i\,\theta_{0}}e^{4\gamma\frac{\beta_{0}}{\mathrm{sin}\,4\gamma T}}e^{-i\frac{\gamma}{2}|x|^{2}\mathrm{cot}\,2\gamma T}\left(\frac{2\gamma \,\beta_{0}}{\mathrm{sin}\,2\gamma T}\right)^{\frac{N}{2}}Q\left(\frac{2\gamma \,\beta_{0}}{\mathrm{sin}\,2\gamma T}\,x\right).
\end{equation*} 
\end{theorem}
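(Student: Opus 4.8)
The plan is to strip off the harmonic potential by an explicit lens (pseudo-conformal) change of variables, thereby reducing the statement to a minimal-mass blow-up classification for the \emph{free} inhomogeneous equation ($\gamma=0$), and then to transport the resulting rigid profile back to the trapped setting. Concretely, I would introduce
\begin{equation*}
u(x,t)=\left(\frac{1}{\cos 2\gamma t}\right)^{\frac{N}{2}}e^{-i\frac{\gamma}{2}|x|^{2}\tan 2\gamma t}\,v\!\left(\frac{x}{\cos 2\gamma t},\,\frac{\tan 2\gamma t}{2\gamma}\right),
\end{equation*}
defined for $t\in[0,\tfrac{\pi}{4\gamma})$, and check by a direct computation that $u$ solves \eqref{GP} if and only if $v$ solves the free equation
\begin{equation*}
i\partial_{s}v+\Delta v+|y|^{-b}|v|^{\frac{4-2b}{N}}v=0 .
\end{equation*}
The mass-critical exponent $p=1+\frac{4-2b}{N}$ is precisely what makes the inhomogeneous nonlinearity compatible with this transform, and the restriction $T<\frac{\pi}{4\gamma}$ is exactly what keeps $\cos 2\gamma t>0$ on $[0,T]$, so that the map is a smooth, $L^{2}$-preserving bijection there.

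Since the dilation factor $(\cos 2\gamma t)^{-N/2}$ and the Gaussian phase are bounded away from $0$ and $\infty$ on every compact subinterval of $[0,T)$, and the new time $s(t)=\frac{\tan 2\gamma t}{2\gamma}$ is a diffeomorphism onto a finite interval, the hypotheses $\|u_{0}\|_{L^{2}}=\|Q\|_{L^{2}}$ and $\|\nabla u(t)\|_{L^{2}}\to\infty$ transfer to the assertion that the corresponding $v$ is a \emph{critical-mass} solution of the free equation that blows up at the finite time $s(T)=\frac{\tan 2\gamma T}{2\gamma}$. Everything then reduces to showing that such a $v$ must coincide, up to the symmetries of the free equation, with the explicit profile $S_{\beta,\theta_{0}}$ of \eqref{AX}.

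This rigidity is the core of the argument and the step I expect to be the main obstacle. I would follow Merle's scheme. At critical mass the sharp Gagliardo--Nirenberg inequality \eqref{GNI}, whose optimizers are exactly the rescalings of the ground state $Q$ of \eqref{Izx}, gives $E(v)\geq 0$. For the critical free equation the variance obeys
\begin{equation*}
V''(s)=16\,E(v), \qquad V(s)=\int_{\mathbb{R}^{N}}|y|^{2}|v(y,s)|^{2}\,dy ,
\end{equation*}
and combining this with the conservation of mass and energy, the divergence of $\|\nabla v\|_{L^{2}}$, and the Cauchy--Schwarz bound $|V'(s)|^{2}\leq 16\,V(s)\,\|\nabla v(s)\|_{L^{2}}^{2}$ forces both $E(v)=0$ and the saturation of \eqref{GNI} along the flow. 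Saturation identifies $v(\cdot,s)$, for each $s$, as a modulated rescaling of $Q$, and the pseudo-conformal invariance of the free equation then rigidifies the modulation parameters into the exact self-similar form, leaving only the scale $\beta>0$ and the phase $\theta_{0}\in\mathbb{R}$ as free parameters. Upgrading the ``at each time a ground state'' information to a single self-similar solution is the delicate point and relies crucially on the uniqueness (up to symmetry) of $Q$.

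Finally I would transport the profile back. Using the time-translation and time-reversal symmetries of the free flow, the free solution underlying $u$ is $S_{\lambda_{0},\theta_{0}}$ evaluated at the shifted time $\frac{\tan 2\gamma T-\tan 2\gamma t}{2\gamma}$, and the elementary identity
\begin{equation*}
\frac{\tan 2\gamma T-\tan 2\gamma t}{2\gamma}=\frac{\sin 2\gamma(T-t)}{2\gamma\cos 2\gamma T\,\cos 2\gamma t}
\end{equation*}
reproduces the claimed time argument; substituting into the lens transform yields the stated expression for $u(t)$ with $\lambda_{0}=\beta$. Setting $t=0$ and carrying out the change of variable $\beta_{0}=\lambda_{0}\cos 2\gamma T$ is then pure bookkeeping: it collects the phases $e^{i\theta_{0}}$ and $e^{-i\frac{\gamma}{2}|x|^{2}\cot 2\gamma T}$ together with the scaling factor $\left(\frac{2\gamma\beta_{0}}{\sin 2\gamma T}\right)^{N/2}$ from \eqref{AX} to give the closed form of the initial datum $u_{0}$.
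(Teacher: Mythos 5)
You reduce to the free equation \eqref{Lp} by exactly the lens transform the paper uses in Lemma \ref{Lbn} (your change of variables is \eqref{ul}), and your transport back --- including the identity $\frac{\tan 2\gamma T-\tan 2\gamma t}{2\gamma}=\frac{\sin 2\gamma(T-t)}{2\gamma\cos 2\gamma T\cos 2\gamma t}$ and the bookkeeping at $t=0$ --- is the paper's computation essentially verbatim. The genuine difference lies in the middle step: the paper does not reprove the minimal-mass classification for \eqref{Lp}; it invokes the Combet--Genoud theorem (Proposition \ref{Ph}, i.e. \cite[Theorem 1]{VCGG}), which moreover holds for all $H^1$ data, not only for data in $\Sigma(\mathbb{R}^N)$.

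The rigidity argument you sketch in place of that citation has a genuine flaw. You claim that the virial identity, the conservation laws and the plain Cauchy--Schwarz bound $|V'(s)|^2\le 16\,V(s)\|\nabla v(s)\|_{L^2}^2$ force $E(v)=0$ together with saturation of \eqref{GNI} along the flow. That conclusion is impossible for a blow-up solution: a critical-mass function with zero energy saturates \eqref{GNI}, hence by the variational characterization of $Q$ it equals $e^{i\theta}\mu^{N/2}Q(\mu\,\cdot)$, and its flow $e^{i\theta}e^{i\mu^{2}s}\mu^{N/2}Q(\mu x)$ is global. In fact, a direct computation using \eqref{Pi} gives $E\bigl(S_{\beta,\theta_0}(t)\bigr)=\frac{1}{8\beta^{2}}\||x|Q\|_{L^2}^{2}>0$, so minimal-mass blow-up solutions have strictly \emph{positive} energy, and they never saturate \eqref{GNI}: the quadratic phase in \eqref{AX} strictly increases $\|\nabla\cdot\|_{L^2}$ while leaving the $L^2$ norm and the potential term unchanged. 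What the correct argument needs is: (a) the refined inequality $|V'(s)|^{2}\le 32\,E(v)\,V(s)$, obtained from the fact that $E(e^{i\lambda|x|^{2}}v)\ge 0$ for every $\lambda\in\mathbb{R}$ at critical mass (plain Cauchy--Schwarz is useless here because $\|\nabla v(s)\|_{L^2}\to\infty$); (b) a proof that $V(s)\to 0$ at the blow-up time, which does \emph{not} follow from (a) plus the virial identity --- these alone allow $V$ to be any nonnegative quadratic, e.g. one with a positive minimum --- and is where mass concentration, pinned at the origin by the $|x|^{-b}$ weight, must enter; and (c) the observation that it is the quadratically modulated function $e^{i|x|^{2}/(4(s^{*}-s))}v(s)$, not $v(s)$ itself, that is a zero-energy optimizer of \eqref{GNI}, after which the modulation parameters must still be rigidified into the exact profile. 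These steps are precisely the content of \cite{VCGG}; as written, your sketch does not close. The economical repair is to quote that theorem, as the paper does, and keep your (correct) reduction and transport arguments around it.
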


\begin{remark} 
(i) Obviously we can prove similar result as Theorem \ref{CMm23} also in the case where $u$ is  a critical mass solution of \eqref{GP} which blows up in the past, i.e., for  $-{\pi}/{4\gamma}<T<0$.\\
(ii)Let $u$ satisfy the hypotheses of the Theorem \ref{CMm23}. Since $Q$ is spherically symmetric, it is not difficult to show that the function $u$ satisfies the relation $u(x, t+\frac{n\pi}{2\gamma})=u(x, t)$ for every $n\in \mathbb{N}$ and $0\leq t<T$. This implies,  by using a time-translation and (i),  that if $u(t)$ does not collapse in finite time $0<T<{\pi}/{2\gamma}$, then it will never collapse in the future. 
\end{remark}




By a standing wave, we mean a solution of \eqref{GP} with the form $u(x,t)=e^{i\omega t}\varphi(x)$ with $\omega\in \mathbb{R}$ and $\varphi$ satisfying the following nonlinear elliptic problem
\begin{equation}\label{Sp}
 \begin{cases} 
 -\Delta \varphi+\omega\varphi+\gamma^{2}|x|^{2}\varphi-|x|^{-b}|\varphi|^{p-1}\varphi=0,\\
\varphi\in \Sigma(\mathbb{R}^{N})\setminus \left\{0\right\}.
\end{cases} 
\end{equation}
We  remember that $\lambda_{1}=\gamma\, N$ is the simple first eigenvalue of the many-dimensional harmonic oscillator $-\Delta +\gamma^{2}|x|^{2}$. More precisely,
\begin{equation}\label{INF1}
\gamma N=\inf\left\{\|\nabla u\|_{L^{2}}^{2}+\gamma^{2}\|xu\|_{L^{2}}^{2}:u\in \Sigma(\mathbb{R}^{N}), \| u\|_{L^{2}}^{2}=1 \right\}.
\end{equation}
The corresponding eigenfunction to $\lambda_{1}$ is 
\begin{equation}\label{Efa}
\Phi(x):=\pi^{-\frac{N}{2}}e^{-\gamma\frac{|x|^{2}}{2}}
\end{equation}
and we have the inequality 
\begin{equation}\label{Igst}
\gamma\, N \|u\|^{2}_{L^{2}}\leq \|\nabla u\|_{L^{2}}^{2}+\gamma^{2}\|xu\|_{L^{2}}^{2}.
\end{equation}
Notice that if $\omega\leq-\gamma\, N$, then the problem \eqref{Sp} does not admit positive solutions. Indeed, suppose that $\varphi$ is a positive solution of \eqref{Sp}. After multiplication of \eqref{Sp} by the function $\Phi$ defined above, and integrating, we infer
\begin{equation*}
(\omega+\gamma N)\int_{\mathbb{R}^{N}}\varphi(x)\Phi(x)\, dx=\int_{\mathbb{R}^{N}}|x|^{-b}\varphi^{p}(x)\Phi(x)>0.
\end{equation*}
Thus $\omega>-\gamma\, N$. On the other hand, since $\Sigma (\mathbb{R}^{N})\hookrightarrow L^{r+1}(\mathbb{R}^{N})$ is compact, where $1\leq r<1+4/(N-2)$($N\geq 3$), $1\leq r<\infty$ ($N=1$, $2$),  we have that there is at least one solution $\varphi\in C(\mathbb{R}^{N})\cap C^{2}(\mathbb{R}^{N}\setminus\left\{0\right\})$ of \eqref{Sp} that is spherically symmetric and positive. Indeed,  let $\omega>-\gamma N$. We denote
\begin{align*}
\|u\|^2_{H_\omega}&:= \|\nabla u\|^2_{L^2} + \gamma^2\|xu\|^2_{L^2} + \omega \|u\|^2_{L^2},\\
P(u) &:=\int_{\mathbb{R}^N} |x|^{-b}|u|^{p+1} dx.
\end{align*}
By \eqref{Igst}, we have for every $\omega>-\gamma N$, $\|u\|^2_{H_\omega} \sim \|u\|^2_{\Sigma}$.

We define the following functionals
\begin{align*}
S_\omega(u) &:= E(u) + \frac{\omega}{2}M(u) = \frac{1}{2} \|u\|^2_{H_\omega} - \frac{1}{p+1} P(u), \\
K_\omega(u) &:= \|u\|^2_{H_\omega} - P(u), \\
I(u) &:= \|\nabla u\|^2_{L^2} -\gamma^2 \|xu\|^2_{L^2} -\frac{N(p-1)+2b}{2(p+1)}P(u).
\end{align*}
Note that the elliptic equation \eqref{Sp} can be written as $S'_\omega(\varphi) =0$. We now consider the minimizing problem
\begin{align}
d_\omega := \inf \{S_\omega(u) \ : \ u \in \Sigma \backslash \{0\}, K_\omega(u) =0\} \label{d_ome}
\end{align}
and define the set of minimizers of \eqref{d_ome} by
\begin{align}
\mathcal{M}_{\omega}= \left\{u\in\Sigma \backslash \{0\}: S_\omega(u)=d_\omega, K_\omega(u) =0 \right\}.
\end{align}
We have the following result.
\begin{theorem} \label{prop GS}
Let $\gamma>0$, $N\geq 1$, $0<b<\min \{2,N\}$, $1<p<2^{\circ}$ and $\omega>-\gamma N$. Then $d_\omega>0$ and $d_\omega$ is attained by a function which is a solution to the elliptic equation \eqref{Sp}. Moreover, every minimizer is the form $e^{i\theta}\varphi (x)$, where $\varphi$ a real-valued, positive and spherically symmetric function.
\end{theorem}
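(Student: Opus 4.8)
The plan is to treat \eqref{d_ome} as a constrained minimization problem in the confined space $\Sigma$ and exploit the compactness furnished by the harmonic potential, then read off the structure of minimizers by symmetrization. The guiding observation is that on the constraint $K_\omega(u)=0$, i.e.\ $\|u\|^2_{H_\omega}=P(u)$, the functional collapses to
\begin{equation*}
S_\omega(u)=\left(\tfrac12-\tfrac{1}{p+1}\right)\|u\|^2_{H_\omega}=\frac{p-1}{2(p+1)}\|u\|^2_{H_\omega},
\end{equation*}
so that minimizing $S_\omega$ on the constraint amounts to minimizing $\|u\|_{H_\omega}$ there. For $d_\omega>0$ I would combine the Gagliardo--Nirenberg inequality \eqref{GNI} with the equivalence $\|u\|^2_{H_\omega}\sim\|u\|^2_\Sigma$ to get $P(u)\le C\|u\|_{H_\omega}^{p+1}$; on the constraint this reads $\|u\|^2_{H_\omega}=P(u)\le C\|u\|_{H_\omega}^{p+1}$, hence $\|u\|_{H_\omega}\ge\delta>0$ for every admissible $u$, and therefore $d_\omega\ge\frac{p-1}{2(p+1)}\delta^2>0$.

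For existence, take a minimizing sequence $(u_n)$ with $K_\omega(u_n)=0$ and $S_\omega(u_n)\to d_\omega$. The identity above shows $(u_n)$ is bounded in $\Sigma$, so along a subsequence $u_n\rightharpoonup u$ in $\Sigma$. Here the potential is decisive: the embedding $\Sigma\hookrightarrow L^{p+1}$ is compact in the subcritical range (as recalled before \eqref{INF1}), and the weight $|x|^{-b}$ with $0<b<\min\{2,N\}$ is locally integrable, so $P(u_n)\to P(u)$. Since $P(u_n)=\|u_n\|^2_{H_\omega}\ge\delta^2$ we get $P(u)\ge\delta^2$, in particular $u\ne0$. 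Weak lower semicontinuity of $\|\cdot\|_{H_\omega}$ together with $P(u_n)\to P(u)$ gives $K_\omega(u)\le\liminf K_\omega(u_n)=0$. If $K_\omega(u)<0$ I rescale: there is $\lambda\in(0,1)$ with $\lambda^{p-1}=\|u\|^2_{H_\omega}/P(u)$ and $K_\omega(\lambda u)=0$, whence $S_\omega(\lambda u)=\frac{p-1}{2(p+1)}\lambda^2\|u\|^2_{H_\omega}<d_\omega$, contradicting the admissibility of $\lambda u$. Hence $K_\omega(u)=0$, and $S_\omega(u)=\frac{p-1}{2(p+1)}\|u\|^2_{H_\omega}\le\liminf S_\omega(u_n)=d_\omega$ forces $S_\omega(u)=d_\omega$, so $u$ is a minimizer.

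That $u$ solves \eqref{Sp} is a Lagrange-multiplier argument. Since $K'_\omega(u)[u]=2\|u\|^2_{H_\omega}-(p+1)P(u)=(1-p)\|u\|^2_{H_\omega}\ne0$, the constraint is a smooth manifold near $u$ and there exists $\mu$ with $S'_\omega(u)=\mu K'_\omega(u)$. Testing against $u$ and using $S'_\omega(u)[u]=K_\omega(u)=0$ gives $0=\mu(1-p)\|u\|^2_{H_\omega}$, so $\mu=0$ and $S'_\omega(u)=0$, which is precisely \eqref{Sp}.

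For the structure of minimizers I would symmetrize twice. For any minimizer $u$, the diamagnetic inequality yields $\||u|\|_{H_\omega}\le\|u\|_{H_\omega}$ and $P(|u|)=P(u)$, so $K_\omega(|u|)\le0$; the rescaling dichotomy above then forces $K_\omega(|u|)=0$ and the equality $\||u|\|_{H_\omega}=\|u\|_{H_\omega}$, whose equality case produces a constant phase $u=e^{i\theta}|u|$. Thus $\varphi:=|u|\ge0$ is a real minimizer solving \eqref{Sp}, and the strong maximum principle (the zeroth-order coefficient $\omega+\gamma^2|x|^2$ is locally bounded and the source $|x|^{-b}\varphi^p\ge0$) gives $\varphi>0$. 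For spherical symmetry I pass to the Schwarz rearrangement $\varphi^*$: since $|x|^2$ is radially increasing and $|x|^{-b}$ radially decreasing, one has $\|x\varphi^*\|_{L^2}\le\|x\varphi\|_{L^2}$ and $P(\varphi^*)\ge P(\varphi)$, while $\|\nabla\varphi^*\|_{L^2}\le\|\nabla\varphi\|_{L^2}$ and $\|\varphi^*\|_{L^2}=\|\varphi\|_{L^2}$; hence $K_\omega(\varphi^*)\le0$ and the dichotomy again forces $\|\varphi^*\|_{H_\omega}=\|\varphi\|_{H_\omega}$. The main obstacle is exactly this last step: to conclude that $\varphi$ \emph{itself}, and not merely some minimizer, is radial I must invoke the equality case of the P\'olya--Szeg\H{o} inequality (Brothers--Ziemer), together with the equality case in the weighted terms, which pins the center of symmetry at the origin; controlling the singular weight and the possible vanishing of $\nabla\varphi$ on level sets are the delicate points.
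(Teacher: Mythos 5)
Your proposal coincides with the paper's own proof in every step up to and including the phase factorization: the Gagliardo--Nirenberg coercivity bound giving $d_\omega>0$, the minimizing-sequence / compact-embedding / rescaling-dichotomy scheme for existence, the vanishing Lagrange multiplier (via $\langle K'_\omega(u),u\rangle=-(p-1)P(u)<0$), and the diamagnetic-plus-maximum-principle step producing $u=e^{i\theta}\varphi$ with $\varphi=|u|>0$. One cosmetic remark: your justification of $P(u_n)\to P(u)$ (``the weight is locally integrable'') is too quick as stated; convergence in $L^{p+1}$ does not pair directly with a singular weight, and the paper spends some effort splitting into the unit ball and its complement and choosing H\"older exponents $\delta,\mu,\tau,\sigma$ compatible with $b$ and $p$. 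This is routine, but it is a step you would have to write out.

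The genuine gap is exactly where you flagged it, and the paper's fix is both simpler than, and different from, the Brothers--Ziemer route you propose. No equality-case analysis of P\'olya--Szeg\H{o} is needed. The paper invokes the \emph{strict} weighted rearrangement inequalities of Hajaiej \cite{HHa}, recorded as \eqref{S1}--\eqref{S2}: because $|x|^{2}$ is strictly radially increasing and $|x|^{-b}$ strictly radially decreasing, one has $\int|x|^{2}|\varphi^{*}|^{2}\,dx<\int|x|^{2}\varphi^{2}\,dx$ and $\int|x|^{-b}|\varphi^{*}|^{p+1}\,dx>\int|x|^{-b}\varphi^{p+1}\,dx$ \emph{unless} $\varphi=\varphi^{*}$. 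Hence, if $\varphi\neq\varphi^{*}$, these strict inequalities combined with P\'olya--Szeg\H{o} give $K_\omega(\varphi^{*})<0$ and $\|\varphi^{*}\|^{2}_{H_\omega}<\|\varphi\|^{2}_{H_\omega}$; your own rescaling dichotomy then yields $\lambda_{0}\in(0,1)$ with $K_\omega(\lambda_{0}\varphi^{*})=0$ and $S_\omega(\lambda_{0}\varphi^{*})=\frac{p-1}{2(p+1)}\lambda_{0}^{2}\|\varphi^{*}\|^{2}_{H_\omega}<\frac{p-1}{2(p+1)}\|\varphi\|^{2}_{H_\omega}=d_\omega$, contradicting the definition of $d_\omega$. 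Therefore $\varphi=\varphi^{*}$, i.e.\ $\varphi$ is radial and radially decreasing. Alternatively, note that your dichotomy already forces term-wise equality $\|x\varphi^{*}\|_{L^{2}}=\|x\varphi\|_{L^{2}}$ (the sum of the $H_\omega$-terms is equal while each term separately satisfies $\leq$), and the equality case of \eqref{S1} alone then pins $\varphi=\varphi^{*}$; the strictly increasing weight also fixes the center of symmetry at the origin, which is precisely the point you were worried about. The delicate issues you list (vanishing of $\nabla\varphi$ on level sets, regularity of the distribution function) never arise, because the strictness is harvested from the potential and the inhomogeneous nonlinearity rather than from the gradient term.
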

Notice that $\varphi$ being radially symmetric, satisfies the ordinary differential equation
	\begin{equation*}
	\varphi^{\prime\prime}+\frac{N-1}{r}\varphi^{\prime}-(\omega+\gamma^{2}r^{2})\varphi+r^{-b}\varphi^{p}=0 \quad \text{in $(0, +\infty)$.}
	\end{equation*}
Using the general results of Shioji and Watanabe \cite{NSKW}, we have that for any $\omega> -\gamma\, N$, $0 < b < 1$, $N\geq 3$ and $1 < p < {2}^{\circ}$  such a solution $\varphi$ is unique, i.e, $\mathcal{M}_{\omega}=\left\{e^{i\theta_{0}}\varphi; \theta_{0}\in\mathbb{R}\right\}$; see Apendix for more details.

We consider the following cross-constrained minimization problem
\[
d_n:= \inf \{ S_\omega(u) \ : \ u \in \mathcal{N}\},
\]
where the constrain $\mathcal{N}$ is given by
\[
\mathcal{N}:= \{ u \in \Sigma \backslash \{0\}, \ K_\omega(u) <0, I(u)=0 \},
\]
and we define
\[
d:= \min\{ d_\omega, d_n\},
\]
where $d_\omega$ is given by \eqref{d_ome}. From Lemma \ref{Povs} we obtain that $d>0$. Now we define the sets
\begin{align*}
K_- &:= \{u \in \Sigma \backslash \{0\} \ : \ S_\omega(u) <d, K_\omega(u)<0, I (u)<0 \}, \\
K_+ &:= \{u \in \Sigma \backslash \{0\} \ : \ S_\omega(u) <d, K_\omega(u)<0, I (u)>0 \}, \\
R_- &:= \{u \in \Sigma \backslash \{0\} \ : \ S_\omega(u) <d, K_\omega(u)<0 \}, \\
R_+ &:= \{u \in \Sigma \backslash \{0\} \ : \ S_\omega(u) <d, K_\omega(u)>0 \}.
\end{align*}
\begin{remark}\label{RM1}
	By the definition, we see that
	\[
	\{u \in \Sigma \backslash \{0\} \ : \ S_\omega<d \} = R_+ \cup K_+ \cup K_-.
	\]
\end{remark}

We are now able to show the sharp threshold for global existence and blow up of solutions to \eqref{GP}. 
\begin{theorem} \label{prop blowup}
Let $\gamma >0, N\geq 1, 0<b<\min \{2,N\}, 1+\frac{4-2b}{N} \leq p<2^{\circ}$ and $\omega>-\gamma N$.\\
(i) If $u_0 \in K_-$, then the corresponding solution to \eqref{GP} blows up in finite time. \\
(ii) If $u_0 \in R_+ \cup K_+$, then the corresponding solution to \eqref{GP} exists globally in time.
\end{theorem}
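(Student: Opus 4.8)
The plan is to combine the second-order virial identity with the invariance of the sets $R_+$, $K_+$, $K_-$ under the flow of Proposition \ref{WP}. Writing $V(t):=\|xu(t)\|_{L^2}^2$, a direct computation from \eqref{GP} (the mechanism behind Lemma \ref{L1}) gives
\begin{equation*}
\frac{d^2}{dt^2}V(t)=8\,I(u(t)),\qquad t\in[0,T),
\end{equation*}
the harmonic-potential contribution $-8\gamma^2\|xu\|_{L^2}^2$ being exactly the $-\gamma^2\|xu\|_{L^2}^2$ term inside $I$. Since $E$ and $M$ are conserved, $S_\omega(u(t))=E(u(t))+\tfrac{\omega}{2}M(u(t))=S_\omega(u_0)<d$ throughout the lifespan, and $M(u(t))=\|u_0\|_{L^2}^2$.

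First I would establish invariance. The point is that the two constraint surfaces cannot be crossed from within $\{S_\omega<d\}$: if $K_\omega(v)=0$ with $v\neq 0$ then $S_\omega(v)\geq d_\omega\geq d$ by \eqref{d_ome}, while if $I(v)=0$ with $K_\omega(v)<0$ then $S_\omega(v)\geq d_n\geq d$ by the definition of $\mathcal N$. Hence along a trajectory with $S_\omega(u(t))<d$ the continuous map $t\mapsto K_\omega(u(t))$ never vanishes, and while $K_\omega(u(t))<0$ the map $t\mapsto I(u(t))$ cannot vanish either; the signs of both functionals are therefore frozen, which together with Remark \ref{RM1} is precisely the invariance of $R_+$, $K_+$ and $K_-$.

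For (i), let $u_0\in K_-$, so $u(t)\in K_-$ and $I(u(t))<0$ for all $t\in[0,T)$. The crux is to upgrade this to a uniform bound $I(u(t))\leq-\delta$ with $\delta>0$ independent of $t$; granting it, the virial identity gives $V''(t)\leq-8\delta$, so that the nonnegative function $V$ is dominated by a downward parabola, forcing $T<\infty$ and hence blow up by Proposition \ref{WP}. To produce $\delta$ I would use the mass-preserving rescaling $u_\lambda(x):=\lambda^{N/2}u(\lambda x)$, for which $\frac{d}{d\lambda}S_\omega(u_\lambda)\big|_{\lambda=1}=I(u)$, locate a scale $\lambda_0\in(0,1)$ at which this derivative vanishes, so $I(u_{\lambda_0})=0$ and $K_\omega(u_{\lambda_0})<0$, and then compare $S_\omega(u_{\lambda_0})\geq d$ with $S_\omega(u)<d$ to reach $I(u)\leq S_\omega(u)-d=S_\omega(u_0)-d=:-\delta$. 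This is the main obstacle: because of the harmonic trap the map $\lambda\mapsto S_\omega(u_\lambda)$ carries a non-homogeneous term $\tfrac{\gamma^2}{2}\lambda^{-2}\|xu\|_{L^2}^2$, so the existence of the critical scale $\lambda_0$ and the concavity of $\lambda\mapsto S_\omega(u_\lambda)$ on $[\lambda_0,1]$ needed to convert $S_\omega(u_{\lambda_0})-S_\omega(u)\geq d-S_\omega(u)$ into a bound on $I(u)=\frac{d}{d\lambda}S_\omega(u_\lambda)\big|_{\lambda=1}$ must be handled carefully; I expect this to rest on the variational analysis behind Lemma \ref{Povs}.

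For (ii), let $u_0\in R_+\cup K_+$; by invariance the solution remains there and it suffices to bound $\|\nabla u(t)\|_{L^2}$. On $R_+$ the identity $\tfrac{p-1}{2(p+1)}\|u\|_{H_\omega}^2=S_\omega(u)-\tfrac{1}{p+1}K_\omega(u)$ together with $K_\omega(u)>0$ and $S_\omega(u)<d$ bounds $\|u\|_{H_\omega}^2\sim\|u\|_\Sigma^2$, hence $\|\nabla u\|_{L^2}$. On $K_+$ the sign $K_\omega<0$ is useless, so I would exploit $I>0$: with $\alpha:=\tfrac{2}{N(p-1)+2b}\leq\tfrac12$ (strictly $<\tfrac12$ exactly when $p>1+\tfrac{4-2b}{N}$) the weight of $P$ cancels and
\begin{equation*}
\Big(\tfrac12-\alpha\Big)\|\nabla u\|_{L^2}^2+\Big(\tfrac12+\alpha\Big)\gamma^2\|xu\|_{L^2}^2=S_\omega(u)-\alpha I(u)-\tfrac{\omega}{2}M(u)<d+\tfrac{|\omega|}{2}\|u_0\|_{L^2}^2 ,
\end{equation*}
where $M(u)=\|u_0\|_{L^2}^2$ is conserved and $I(u)>0$. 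In the mass-supercritical case $\tfrac12-\alpha>0$ bounds $\|\nabla u\|_{L^2}$ directly; in the mass-critical case $\alpha=\tfrac12$ the display only bounds $\|xu\|_{L^2}$, and one then closes the estimate via energy conservation together with the Gagliardo-Nirenberg inequality \eqref{GNI} and the conserved mass, as in the global-existence discussion following Proposition \ref{WP}. In either case $\|\nabla u(t)\|_{L^2}$ stays bounded, so $T=\infty$.
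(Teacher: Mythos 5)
Your skeleton coincides with the paper's: invariance of $K_\pm$, $R_\pm$ deduced from the definitions of $d_\omega$ and $d_n$, the virial identity $\frac{d^2}{dt^2}\|xu(t)\|^2_{L^2}=8I(u(t))$, a uniform bound $I(u(t))\le -\delta$ for blow-up, and coercivity of $S_\omega$ for global existence; your invariance argument, the $R_+$ case and the mass-supercritical $K_+$ case are correct and essentially identical to the paper's. However, the two steps you defer are exactly where the proof lives, and the mechanisms you propose for them fail. For part (i): with the mass-preserving scaling $u_\lambda=\lambda^{N/2}u(\lambda x)$ one has
\begin{equation*}
I(u_\lambda)=\lambda^2\|\nabla u\|^2_{L^2}-\lambda^{-2}\gamma^2\|xu\|^2_{L^2}-\frac{N(p-1)+2b}{2(p+1)}\,\lambda^{\frac{N(p-1)}{2}+b}P(u),
\end{equation*}
and whenever $\|\nabla u\|^2_{L^2}\le \gamma^2\|xu\|^2_{L^2}$ --- which nothing in the definition of $K_-$ rules out --- the right-hand side is negative for every $\lambda\in(0,1]$, so the critical scale $\lambda_0\in(0,1)$ with $I(u_{\lambda_0})=0$ need not exist at all (and a zero with $\lambda_0>1$ is useless to you, since your bound degenerates like $(\lambda_0-1)^{-1}$). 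Moreover $\lambda\mapsto S_\omega(u_\lambda)$ contains the convex term $\frac{\gamma^2}{2}\lambda^{-2}\|xu\|^2_{L^2}$, so the concavity you invoke on $[\lambda_0,1]$ is false in general. The paper resolves both problems by scaling with $u_\mu(x)=\mu^{\frac{N-b}{p+1}}u(\mu x)$, which leaves $P$ \emph{invariant}: then $I(u_\mu)=\mu^{a_1}\|\nabla u\|^2_{L^2}-\mu^{a_2}\gamma^2\|xu\|^2_{L^2}-\frac{N(p-1)+2b}{2(p+1)}P(u)$ with $a_1>0>a_2$, hence $I(u_\mu)\to+\infty$ as $\mu\to\infty$ and a zero $\mu_0>1$ always exists; no concavity is needed because the inequality $S_\omega(u)-S_\omega(u_{\mu_0})\ge \frac12\bigl(I(u)-I(u_{\mu_0})\bigr)=\frac12 I(u)$ is checked term by term (the $P$-terms cancel), and one must also run the dichotomy you omit: either $K_\omega(u_\mu)<0$ up to $\mu_0$ (compare with $d_n$), or $K_\omega(u_{\mu_1})=0$ for some $\mu_1\in(1,\mu_0]$ (compare with $d_\omega$).

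For part (ii), the mass-critical $K_+$ case cannot be closed by ``energy conservation + Gagliardo--Nirenberg + conserved mass'': in the critical case $p=1+\frac{4-2b}{N}$ the absorption in \eqref{GNI} works only under the mass-smallness condition $\|u_0\|_{L^2}<\|Q\|_{L^2}$ (this is precisely the proof of Theorem \ref{SC}(i)), and no such condition is encoded in the definition of $K_+$. A bound on $\|xu(t)\|^2_{L^2}$ together with bounded energy does not preclude blow-up in this regime: the minimal-mass blow-up solutions of Theorem \ref{CMm23} have bounded variance. The paper instead performs a second $P$-preserving rescaling $u_\mu=\mu^{\frac{N(N-b)}{2N+4-2b}}u(\mu x)$, uses $I(u(t))>0$ to locate $\mu_0<1$ with $I(u_{\mu_0})=0$, and extracts the gradient bound from a dichotomy on the sign of $K_\omega(u_{\mu_0})$ (via $d_n$ when it is negative, via the coercive quantity $S_\omega-\frac{1}{p+1}K_\omega$ when it is nonnegative), combined with the bound $\gamma^2\|xu\|^2_{L^2}+\frac{\omega}{2}\|u\|^2_{L^2}<d$ that you derived. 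Both gaps have the same root: in the presence of the harmonic potential, only scalings that freeze the nonlinear term $P$ allow the comparison with $d_\omega$ and $d_n$ to close, and your proposal never supplies a substitute for this device.
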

From Theorem \ref{prop blowup} and Remark \ref{RM1} we infer that if $S_\omega(u_0)<d$, then the solution of Cauchy problem \eqref{GP} exists globally if and only if $u_0\in R_+ \cup K_+$.

From a physical point of view, the most important solutions of the stationary problem \eqref{Sp} are the so-called ground states solutions; that is, which are the minimizers  of the energy functional $E$ subject to a prescribed mass constraint $q>0$,
\begin{equation}\label{PMp}
I_{q}=\inf\left\{E(u),\quad u\in \Sigma(\mathbb{R}^{N}),\quad \|u\|^{2}_{L^{2}}=q \right\}.
\end{equation}
Eventually, we introduce the set of ground states of \eqref{Sp} by
\begin{equation*}
\mathcal{G}_{q}:=\left\{\varphi\in \Sigma(\mathbb{R}^{N}) \quad \text{such that} \quad  I_{q}=E(\varphi), \quad \|\varphi\|^{2}_{L^{2}}=q\right\}.
\end{equation*}
Notice that if $\varphi\in \mathcal{G}_{q}$, then there exists a Lagrange multiplier $\omega\in \mathbb{R}$ such that \eqref{Sp} is satisfied. Thus, $u(x,t)=e^{i\omega t}\varphi(x)$ is a solution of the Cauchy problem \eqref{GP} with initial condition $u_{0}=\varphi$.

We present a result about the existence of ground state.
\begin{theorem}\label{CMm}
Let $\gamma>0$, $N\geq 1$, $0<b<\min\left\{2, N\right\}$  and $1<p<1+\frac{4-2b}{N}$. \\
(i) Any minimizing sequence of $I_{q}$ is relatively compact in $\Sigma(\mathbb{R}^{N})$. In particular, the set of ground states $\mathcal{G}_{q}$ is not empty.\\
(ii) If $u\in \mathcal{G}_{q}$, then there exists a real-valued, positive and spherically symmetric function $\varphi\in \Sigma(\mathbb{R}^{N})$ such that $u(x)=e^{i\theta_{0}}\varphi(x)$ with $\theta_{0}\in \mathbb{R}$.
\end{theorem}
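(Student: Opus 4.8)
The plan is to prove (i) by the direct method of the calculus of variations, crucially exploiting the compact embedding $\Sigma(\mathbb{R}^{N})\hookrightarrow L^{r+1}(\mathbb{R}^{N})$ recorded above, which is what replaces the concentration--compactness machinery needed in the translation--invariant case $\gamma=0$. First I would establish coercivity of $E$ on the constraint $\{\|u\|^{2}_{L^{2}}=q\}$. Inserting the Gagliardo--Nirenberg inequality \eqref{GNI} with $\|u\|^{2}_{L^{2}}=q$ fixed gives
\[
E(u)\ge \tfrac{1}{2}\big(\|\nabla u\|^{2}_{L^{2}}+\gamma^{2}\|xu\|^{2}_{L^{2}}\big)-C\,q^{\frac{1}{2}(p+1-\frac{N(p-1)}{2}-b)}\,\|\nabla u\|^{\frac{N(p-1)}{2}+b}_{L^{2}}.
\]
Since $1<p<1+\frac{4-2b}{N}$ forces the exponent $\theta:=\frac{N(p-1)}{2}+b<2$, Young's inequality shows that $E$ is bounded below and, moreover, that $E(u)\to+\infty$ as $\|u\|_{\Sigma}\to\infty$ along the constraint. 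Hence $I_{q}$ is finite and any minimizing sequence $\{u_{n}\}$ is bounded in $\Sigma$.

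Next I would extract a weak limit: up to a subsequence $u_{n}\rightharpoonup u$ in $\Sigma$, and by the compact embedding $u_{n}\to u$ strongly in $L^{2}$ and in $L^{p+1}$; in particular $\|u\|^{2}_{L^{2}}=q$, so the limit is admissible and, decisively, no mass escapes to infinity. The only delicate point is the convergence of the weighted term $P(u_{n})\to P(u)$. Here I would split $\mathbb{R}^{N}$ into $\{|x|\ge R\}$ and $\{|x|\le R\}$: on the exterior region $|x|^{-b}\le R^{-b}$, so that part of $P$ is bounded by $R^{-b}\sup_{n}\|u_{n}\|^{p+1}_{L^{p+1}}$, which is uniformly small for $R$ large; on the ball $\{|x|\le R\}$ the local integrability of $|x|^{-b}$ (valid since $b<N$) together with the strong $L^{p+1}$ convergence and H\"older's inequality yields convergence. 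With $P(u_{n})\to P(u)$ in hand, the weak lower semicontinuity of the quadratic form $\|\nabla\,\cdot\,\|^{2}_{L^{2}}+\gamma^{2}\|x\,\cdot\,\|^{2}_{L^{2}}$ gives $E(u)\le\liminf E(u_{n})=I_{q}$, whence $E(u)=I_{q}$ and $u$ is a minimizer. To upgrade weak to strong convergence I would note that $\tfrac{1}{2}(\|\nabla u_{n}\|^{2}_{L^{2}}+\gamma^{2}\|xu_{n}\|^{2}_{L^{2}})=E(u_{n})+\tfrac{1}{p+1}P(u_{n})$ converges to the same expression evaluated at $u$; combined with weak convergence and the lower semicontinuity of each quadratic piece, this forces norm convergence of each term, hence strong convergence in $\Sigma$ since $\Sigma$ is a Hilbert space. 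This proves (i).

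For (ii) I would argue in three steps on a minimizer $u\in\mathcal{G}_{q}$. First, the diamagnetic inequality gives $\|\nabla|u|\|_{L^{2}}\le\|\nabla u\|_{L^{2}}$ while $\||u|\|_{L^{2}}=\|u\|_{L^{2}}$, $\|x|u|\|_{L^{2}}=\|xu\|_{L^{2}}$ and $P(|u|)=P(u)$, so $E(|u|)\le E(u)=I_{q}$; since $|u|$ is admissible, equality holds, and the equality case of the diamagnetic inequality yields $u=e^{i\theta_{0}}|u|$ for a constant phase $\theta_{0}$. Second, writing $v=|u|\ge0$, I would apply the symmetric decreasing rearrangement $v^{*}$: the P\'olya--Szeg\H{o} inequality controls the gradient, the rearrangement inequality for the radially increasing weight $|x|^{2}$ gives $\int|x|^{2}(v^{*})^{2}\le\int|x|^{2}v^{2}$, and the Hardy--Littlewood inequality for the radially decreasing weight $|x|^{-b}$ gives $P(v^{*})\ge P(v)$, so $E(v^{*})\le E(v)=I_{q}$. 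Equality is then forced in each inequality, and the strict monotonicity of the weight $|x|^{2}$ forces $v=v^{*}$, i.e. $v$ is real, nonnegative and spherically symmetric. Third, the minimizer satisfies the Euler--Lagrange equation \eqref{Sp} for some Lagrange multiplier $\omega$; since $v\ge0$ solves $-\Delta v+(\omega+\gamma^{2}|x|^{2})v=|x|^{-b}v^{p}\ge0$ and $v\not\equiv0$, elliptic regularity and the strong maximum principle give $v>0$, so $\varphi:=v$ has the asserted properties.

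The main obstacle is the convergence of the singular weighted nonlinearity $P(u_{n})\to P(u)$ from only the compact $L^{p+1}$ embedding --- controlling $|x|^{-b}$ both near the origin and at infinity --- and, in (ii), extracting full radial symmetry (not merely the existence of a radial minimizer) from the equality cases of the rearrangement inequalities, for which the strict monotonicity of the harmonic weight $|x|^{2}$ is the essential tool.
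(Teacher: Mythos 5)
Your proof of part (i) is correct and follows essentially the same route as the paper: coercivity of $E$ on the constraint via the Gagliardo--Nirenberg inequality \eqref{GNI} plus Young's inequality (using that $\frac{N(p-1)}{2}+b<2$ precisely when $p<1+\frac{4-2b}{N}$), extraction of a weak limit, the compact embedding $\Sigma\hookrightarrow L^{r+1}$, convergence of the weighted term $P$ by splitting $\mathbb{R}^{N}$ into a ball and its complement and applying H\"older's inequality (this is exactly the claim \eqref{conver} established inside the paper's proof of Theorem \ref{prop GS}, which the paper then cites in its proof of Theorem \ref{CMm}; your treatment of the exterior region via $|x|^{-b}\le R^{-b}$ is a harmless variant), weak lower semicontinuity, and the Hilbert-space upgrade from weak convergence plus norm convergence to strong $\Sigma$-convergence. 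Part (ii) likewise uses the same ingredients as the paper: passing to the modulus, Schwarz rearrangement with the strict weighted inequalities \eqref{S1}--\eqref{S2}, the Euler--Lagrange equation \eqref{Sp}, and the maximum principle.

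However, in part (ii) one step is invoked in an order in which it is not valid. The equality $\|\nabla|u|\|_{L^{2}}=\|\nabla u\|_{L^{2}}$ alone does \emph{not} imply $u=e^{i\theta_{0}}|u|$: any real-valued sign-changing $u\in H^{1}$ satisfies this equality without being a constant phase times its modulus, and in general the phase is only locally constant on the connected components of $\{u\neq 0\}$. To conclude a single global constant phase one first needs $|u|>0$ on all of $\mathbb{R}^{N}$. The paper's symmetry argument (in the proof of Theorem \ref{prop GS}, to which the proof of Theorem \ref{CMm} appeals) therefore proceeds in the opposite order from yours: first, $|u|$ is itself a minimizer and hence satisfies \eqref{Sp}; elliptic regularity and the strong maximum principle then give $u\in C^{1}$ and $|u|>0$; only then does the identity $|\nabla u|^{2}=|\nabla|u||^{2}+|u|^{2}|\nabla w|^{2}$, with $w=u/|u|$ well defined, force $\nabla w\equiv 0$ and a constant phase. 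Your proposal contains every needed ingredient---your step 3 is precisely this positivity argument, and it applies verbatim to the nonnegative minimizer $|u|$ without any reference to the phase---so the proof is repaired simply by reordering: modulus is a minimizer $\Rightarrow$ positivity of $|u|$ $\Rightarrow$ constant phase $\Rightarrow$ rearrangement and radial symmetry. As written, though, your step 1 rests on an implication that is false without the positivity established only in step 3.
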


For the critical case $p=1+\frac{4-2b}{N}$,  under appropriate assumption on $q$, we have similar results.
\begin{theorem}\label{CC}
Let $\gamma>0$, , $N\geq 1$, $0<b<\min\left\{2, N\right\}$ and $p=1+\frac{4-2b}{N}$. \\
Let $q$ satisfy that $q< \|Q\|^{2}_{L^{2}}$. Then the set $\mathcal{G}_{q}$ is not empty. Moreover, every minimizer is of the form $e^{i\theta_{0}}\varphi(x)$, where $\varphi$ is a positive and spherically symmetric function and $\theta_{0}\in \mathbb{R}$.
\end{theorem}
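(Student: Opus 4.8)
The plan is to apply the direct method of the calculus of variations to the constrained minimization problem $I_q$ in \eqref{PMp}, following the same scheme as the proof of Theorem \ref{CMm}, the only essential new ingredient being a coercivity estimate that is available precisely because of the subcriticality condition $q<\|Q\|_{L^2}^2$. In the mass-critical case $p=1+\frac{4-2b}{N}$ the Gagliardo--Nirenberg inequality \eqref{GNI} has gradient exponent $\frac{N(p-1)}{2}+b=2$ and $L^2$-exponent $\frac{4-2b}{N}$, and inserting the sharp constant coming from the Weinstein functional $J$ (whose minimum is $\frac{N}{2+N-b}\|Q\|_{L^2}^{\frac{4-2b}{N}}$) yields
$$\int_{\mathbb{R}^N}|x|^{-b}|u|^{p+1}\,dx\le \frac{2+N-b}{N}\|Q\|_{L^2}^{-\frac{4-2b}{N}}\|\nabla u\|_{L^2}^2\,\|u\|_{L^2}^{\frac{4-2b}{N}}.$$

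First I would substitute this bound into the energy \eqref{Es}. Since $\frac{1}{p+1}=\frac{N}{2(N+2-b)}$, the coefficients combine cleanly and give
$$E(u)\ge \frac12\|\nabla u\|_{L^2}^2\left(1-\Big(\tfrac{\|u\|_{L^2}^2}{\|Q\|_{L^2}^2}\Big)^{\frac{2-b}{N}}\right)+\frac{\gamma^2}{2}\|xu\|_{L^2}^2.$$
On the constraint $\|u\|_{L^2}^2=q<\|Q\|_{L^2}^2$ the parenthesis equals the fixed positive number $1-\delta$, where $\delta:=(q/\|Q\|_{L^2}^2)^{\frac{2-b}{N}}<1$, so $E$ is bounded below by $0$ and, more importantly, coercive on the constraint: $\min\{\tfrac{1-\delta}{2},\tfrac{\gamma^2}{2}\}\,\|u\|_{\Sigma}^2\le E(u)$. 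This gives at once $I_q\in[0,\infty)$ and that any minimizing sequence $\{u_n\}$ with $\|u_n\|_{L^2}^2=q$ and $E(u_n)\to I_q$ is bounded in $\Sigma(\mathbb{R}^N)$. This coercivity step is exactly where the hypothesis $q<\|Q\|_{L^2}^2$ enters in an essential way, and I expect it to be the crux of the proof.

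Next I would pass to the limit. Using the compact embedding $\Sigma(\mathbb{R}^N)\hookrightarrow L^{r+1}(\mathbb{R}^N)$ recorded in the introduction, I extract a subsequence with $u_n\rightharpoonup\varphi$ weakly in $\Sigma$ and strongly in $L^2$ and in the relevant $L^{r+1}$ spaces. Strong $L^2$-convergence preserves the mass, so $\|\varphi\|_{L^2}^2=q>0$ and in particular $\varphi\ne0$; the convergence of the weighted nonlinear term $\int|x|^{-b}|u_n|^{p+1}\to\int|x|^{-b}|\varphi|^{p+1}$ follows by splitting the integral near and away from the origin and using that $|x|^{-b}\in L^1_{loc}$ for $0<b<\min\{2,N\}$ together with the compact embedding. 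Since the quadratic terms $\|\nabla\cdot\|_{L^2}^2$ and $\|x\cdot\|_{L^2}^2$ are weakly lower semicontinuous, $E(\varphi)\le\liminf E(u_n)=I_q$, while the constraint forces $E(\varphi)\ge I_q$; hence $E(\varphi)=I_q$ and $\mathcal{G}_q\ne\emptyset$.

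Finally, for the structure of minimizers I would argue as follows. Writing $u=|u|e^{i\theta}$ and using $|\nabla u|^2=|\nabla|u||^2+|u|^2|\nabla\theta|^2\ge|\nabla|u||^2$ shows $E(|u|)\le E(u)$, with equality only if $\theta$ is constant, so every minimizer has constant phase, $u=e^{i\theta_0}|u|$. Then I would apply the Schwarz symmetrization to $|u|$: the rearrangement preserves the mass, does not increase $\|\nabla|u|\|_{L^2}$ (P\'olya--Szeg\H{o}) nor $\|x|u|\|_{L^2}$ (as $|x|^2$ is radially increasing), and does not decrease $\int|x|^{-b}|u|^{p+1}$ (as $|x|^{-b}$ is radially decreasing). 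Hence $E(|u|^*)\le E(|u|)=I_q$, which forces equality throughout; the equality cases of these rearrangement inequalities then give that $|u|$ is already radially symmetric about the origin, and positivity follows from the strong maximum principle applied to the Euler--Lagrange equation \eqref{Sp} satisfied by $\varphi=|u|$ with its Lagrange multiplier $\omega$. The main obstacle is the coercivity/sharp-constant step; the compactness and symmetrization steps run parallel to Theorem \ref{CMm}.
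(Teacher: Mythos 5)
Your proposal is correct and follows essentially the same route as the paper: boundedness/coercivity of minimizing sequences via the sharp Gagliardo--Nirenberg inequality \eqref{Lk} (where $q<\|Q\|_{L^2}^2$ enters), compactness of the embedding $\Sigma(\mathbb{R}^N)\hookrightarrow L^{r+1}$ together with the convergence \eqref{conver} of the weighted nonlinear term and weak lower semicontinuity to produce a minimizer, and then the phase-constancy plus Schwarz rearrangement argument of Theorem \ref{prop GS} (with \eqref{S1}--\eqref{S2} and the maximum principle) for the form $e^{i\theta_0}\varphi$ of minimizers.
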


Notice that if $p>1+\frac{4-2b}{N}$, then we have $I_{q}=-\infty$. Indeed, for $v\in \Sigma(\mathbb{R}^{N})$ with $\|v\|^{2}_{L^{2}}=q$ we define  $v_{\mu}(x):=\mu^{\frac{N}{2}}v(\mu x)$. It is clear that  $\|v_{\mu}\|^{2}_{L^{2}}=\|v\|^{2}_{L^{2}}$ and 
\begin{equation*}
E(v_{\mu})=\frac{\mu^{2}}{2}\int_{\mathbb{R}^{N}}|\nabla v|^{2}dx+\mu^{-2}\frac{\gamma^{2}}{2}\int_{\mathbb{R}^{N}}|x|^{2}|v|^{2}dx-\frac{\mu^{\frac{N}{2}(p-1)+b}}{p+1}\int_{\mathbb{R}^{N}}|x|^{-b}|u|^{p+1}dx.
\end{equation*}
Thus, since $p>1+\frac{4-2b}{N}$, it follows that $E(v_{\mu})\rightarrow -\infty$ as $\mu$ goes to $+\infty$. To show the existence of ground states in the supercritical case $1+\frac{4-2b}{N}<p<{2}^{\circ}$, we consider a local minimization problem. Following \cite{BEBOJEVI2017}, we introduce the following sets
\begin{align*}
S_{q}:=&\left\{u\in\Sigma(\mathbb{R}^{N}): \|u\|^{2}_{L^{2}}=q \right\} \\ 
B_{r}:=&\left\{u\in\Sigma(\mathbb{R}^{N}): \|u\|^{2}_{H}\leq r \right\}, 
\end{align*}
where $\|\cdot\|_{H}$ denotes the norm 
\begin{equation}\label{Hn}
\|u\|^{2}_{H}:= \|\nabla u\|^{2}_{L^{2}}+\gamma^{2}\|xu\|^{2}_{L^{2}}.
\end{equation}
For a fixed $q>0$ and $r>0$, we set the following local variational problem
\begin{equation}\label{Lmp1}
I^{r}_{q}=\inf\left\{E(u),\quad u\in S_{q}\cap B_{r}\right\}.
\end{equation}
Notice that if $S_{q}\cap B_{r}\neq \emptyset$, then by \eqref{GNI} we infer that $I^{r}_{q}>-\infty$. We denote the set of nontrivial solutions of \eqref{Lmp1} by
\begin{equation*}
\mathcal{G}^{r}_{q}:=\left\{\varphi\in S_{q}\cap B_{r} \quad \text{such that} \quad  I^{r}_{q}=E(\varphi)\right\}.
\end{equation*}

\begin{theorem}\label{Lmp}
Let $\gamma>0$, $N\geq 1$, $0<b<\min\left\{2, N\right\}$ and $1+\frac{4-2b}{N}<p<{2}^{\circ}$. For any $r>0$ there exists $q_{0}>0$ such that for every $q<q_{0}:$\\
(i) Any minimizing sequence for problem $I^{r}_{q}$ is precompact in $\Sigma(\mathbb{R}^{N})$. \\
(ii) For every $\varphi\in\mathcal{G}_{q}$ there exists a Lagrange multiplier $\omega\in \mathbb{R}$ such that \eqref{Sp} is satisfied with the estimates
\begin{equation*}
-\gamma N<\omega\leq -\gamma N(1-Cq^{\frac{p-1}{2}}).
\end{equation*}
In particular, $\omega\rightarrow -\gamma N$ as $q\rightarrow 0$.\\
(iii) If $u\in \mathcal{G}^{r}_{q}$, then $u(x)=e^{i\theta_{0}}\varphi(x)$, where $\varphi$ is a positive and radially symmetric function and $\theta_{0}\in \mathbb{R}$.
\end{theorem}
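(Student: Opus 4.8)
The plan is to handle the three assertions in turn, with the compact embedding $\Sigma(\mathbb{R}^N)\hookrightarrow L^{p+1}(\mathbb{R}^N)$ (available since $p+1<2^{\circ}+1<2^*$ because $b>0$) as the main analytic tool, and with the smallness hypothesis $q<q_0$ isolated to the single step that confines the minimizer to the interior of $B_r$. For (i) I would first note that every $u\in S_q\cap B_r$ satisfies $\|u\|^2_\Sigma=\|u\|^2_H+q\le r+q$, so any minimizing sequence $(u_n)$ is bounded in $\Sigma$. Passing to a subsequence, $u_n\rightharpoonup u$ weakly in $\Sigma$ and strongly in $L^2$ and $L^{p+1}$; the weight $|x|^{-b}$ is dealt with by splitting the integral near the origin (where $|x|^{-b}\in L^1_{\mathrm{loc}}$ since $b<N$, using H\"older together with the stronger $L^{s}$ convergence) and away from it (where the potential term controls the tail), yielding $P(u_n)\to P(u)$. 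Weak lower semicontinuity of the quadratic part gives $E(u)\le\liminf E(u_n)=I^r_q$, while strong $L^2$ convergence gives $\|u\|^2_{L^2}=q$ and lower semicontinuity gives $\|u\|_H^2\le r$, so $u\in S_q\cap B_r$ is a minimizer. Finally $E(u_n)\to E(u)$ and $P(u_n)\to P(u)$ force $\|u_n\|_H^2\to\|u\|_H^2$, and with $\|u_n\|^2_{L^2}=\|u\|^2_{L^2}$ this gives $\|u_n\|_\Sigma\to\|u\|_\Sigma$; weak convergence plus convergence of norms in the Hilbert space $\Sigma$ then yields $u_n\to u$ strongly, proving precompactness.

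The decisive step, and the one I expect to be the main obstacle, is to show for (ii) that a minimizer $\varphi$ lies strictly inside $B_r$. Testing $I^r_q$ with $\sqrt{q}\,\Phi/\|\Phi\|_{L^2}$, where $\Phi$ is the harmonic-oscillator ground state \eqref{Efa}, and using $\|\Phi\|_H^2/\|\Phi\|_{L^2}^2=\gamma N$ from \eqref{Igst}, gives the upper bound $I^r_q\le\frac{\gamma N}{2}q$. On the other hand, writing $s:=\frac{N(p-1)}{2}+b$ (so that $2<s<p+1$ in the present range and $\beta:=\frac{p+1-s}{2}>0$) and applying \eqref{GNI}, any $u\in S_q$ with $\|u\|_H^2=r$ satisfies $E(u)\ge\frac{r}{2}-Cr^{s/2}q^{\beta}$, which tends to $r/2$ as $q\to 0$. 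Hence there is $q_0>0$, depending on $r$, such that for $q<q_0$ one has $I^r_q<\inf\{E(u):u\in S_q,\ \|u\|_H^2=r\}$; since $E(\varphi)=I^r_q$ and $\varphi\in B_r$, this forces $\|\varphi\|_H^2<r$. With the constraint $\|\cdot\|_H^2\le r$ now inactive, the Lagrange multiplier theorem applied to the remaining constraint $\|u\|_{L^2}^2=q$ produces $\omega\in\mathbb{R}$ with $E'(\varphi)+\omega\varphi=0$, which is precisely \eqref{Sp}.

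For the estimate on $\omega$, pairing \eqref{Sp} with $\varphi$ gives $\omega q=P(\varphi)-\|\varphi\|_H^2$. Using \eqref{GNI} and $\|\varphi\|_H^2\le r$ to write $P(\varphi)\le C\|\varphi\|_H^{s}q^{\beta}\le Cr^{s/2-1}q^{\beta}\|\varphi\|_H^2$, the bound $E(\varphi)\le\frac{\gamma N}{2}q$ rearranges to $\frac12\|\varphi\|_H^2\le\frac{\gamma N}{2}q+\frac{1}{p+1}P(\varphi)$, and for $q$ small enough the nonlinear term is absorbed into the left-hand side to yield $\|\varphi\|_H^2\le 2\gamma N q$. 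Feeding this back into \eqref{GNI} gives $P(\varphi)\le Cq^{(p+1)/2}$, and together with $\|\varphi\|_H^2\ge\gamma N q$ from \eqref{Igst} this produces $\omega q\le Cq^{(p+1)/2}-\gamma N q$, i.e. $\omega\le-\gamma N(1-Cq^{(p-1)/2})$; in particular $\omega\to-\gamma N$ as $q\to0$. The strict lower bound $\omega>-\gamma N$ follows from the positivity established in (iii): multiplying \eqref{Sp} by $\Phi>0$ and integrating gives $(\omega+\gamma N)\int\varphi\Phi\,dx=\int|x|^{-b}\varphi^{p}\Phi\,dx>0$.

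For (iii) I would argue by symmetrization. The diamagnetic inequality gives $\|\nabla|u|\|_{L^2}\le\|\nabla u\|_{L^2}$, while the potential, mass and nonlinear terms are unchanged under $u\mapsto|u|$, so $E(|u|)\le E(u)$ and $|u|\in S_q\cap B_r$; as $u$ is a minimizer, equality holds, and the equality case of the diamagnetic inequality forces $u=e^{i\theta_0}|u|$ for a constant $\theta_0$. Replacing $|u|$ by its Schwarz symmetrization $(|u|)^{\ast}$ decreases the gradient (P\'olya--Szeg\H{o}) and the integral $\int|x|^2|\cdot|^2$ (since $|x|^2$ is radially increasing) while increasing $P$ (Hardy--Littlewood, as $|x|^{-b}$ is radially decreasing), so $E((|u|)^{\ast})\le E(|u|)=I^r_q$; equality again holds, and strict monotonicity of $|x|^2$ in the equality case forces $|u|=(|u|)^{\ast}$, so $\varphi:=|u|$ is radially symmetric and nonincreasing. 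Finally, elliptic regularity together with the strong maximum principle applied to \eqref{Sp} gives $\varphi>0$, completing the proof. The only genuinely subtle point, beyond the interior-confinement step, is the careful verification that $P(u_n)\to P(u)$ for the weighted nonlinearity in (i); everything else reduces to the comparison of scales $I^r_q\sim\frac{\gamma N}{2}q\ll r/2$ that drives both the interior property and the bound on $\omega$.
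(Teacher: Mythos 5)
Your proof is correct, and its overall architecture --- local minimization on $S_{q}\cap B_{r}$, compactness via the compact embedding $\Sigma(\mathbb{R}^{N})\hookrightarrow L^{p+1}(\mathbb{R}^{N})$, confinement of minimizers away from the boundary of $B_{r}$, then the Lagrange multiplier and Gagliardo--Nirenberg bookkeeping --- is the same as the paper's. The execution of the confinement step, however, is genuinely different. The paper proves a sharper two-scale statement (Lemma \ref{Mlh}(ii)): for $q$ small, $\inf\{E(u):u\in S_{q}\cap B_{rq/2}\}<\inf\{E(u):u\in S_{q}\cap(B_{r}\setminus B_{rq})\}$, so every minimizer lies in the small ball $B_{rq}$; the resulting bound $\|\varphi\|^{2}_{H}\leq rq$ is then inserted directly into \eqref{GNI} to produce $\omega\leq-\gamma N(1-Cq^{\frac{p-1}{2}})$. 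You instead only exclude the boundary sphere $\{\|u\|^{2}_{H}=r\}$, by comparing $I^{r}_{q}\leq\frac{\gamma N}{2}q$ with the boundary infimum $\geq\frac{r}{2}-Cr^{s/2}q^{\beta}$, and then recover the $O(q)$ localization $\|\varphi\|^{2}_{H}\leq 2\gamma Nq$ a posteriori by absorbing $P(\varphi)$ into the energy via the same test-function bound; this yields the same power $q^{\frac{p-1}{2}}$ (with $C$ depending on $r$, as in the paper) and is arguably more self-contained, at the price of losing the explicit $B_{rq}$-localization that the paper reuses later in the proof of Theorem \ref{Et}(iii). A second difference concerns the strict bound $\omega>-\gamma N$: the paper obtains it without any positivity, from the identity $\omega q=-2I^{r}_{q}+\frac{p-1}{p+1}P(\varphi)$ together with $I^{r}_{q}<\frac{\gamma N}{2}q$, whereas you invoke the positivity of $\varphi$ from part (iii) and pair \eqref{Sp} with the Gaussian $\Phi$ of \eqref{Efa}; this is valid and not circular, provided the steps are ordered as you order them (interior confinement and existence of the multiplier first, positivity via the maximum principle second, bounds on $\omega$ last). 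Parts (i) and (iii) of your argument coincide with the paper's: the convergence $P(u_{n})\to P(u)$ is exactly the claim \eqref{conver} established inside the proof of Theorem \ref{prop GS}, and your symmetrization argument uses \eqref{S1}--\eqref{S2} precisely as the paper does.
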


We now discuss the orbital stability of standing waves. For $\mathcal{M}\subset \Sigma(\mathbb{R}^{N})$, we say that the set $\mathcal{M}$ is $\Sigma(\mathbb{R}^{N})$-stable under the flow generated by \eqref{GP}  if for all $\epsilon>0$ there exists $\delta>0$ with the following property: if $u_{0}\in \Sigma(\mathbb{R}^{N})$ and $$\inf_{\varphi\in \mathcal{M}}\|u_{0}-\varphi\|_{\Sigma(\mathbb{R}^{N})}<\delta,$$ then the solution $u(t)$ of the Cauchy problem exists for all $t\in \mathbb{R}$ and
\begin{equation*}
\sup_{t\in \mathbb{R}}\inf_{\varphi\in \mathcal{M}}\|u(t)-\varphi\|_{\Sigma(\mathbb{R}^{N})}<\epsilon.
\end{equation*}
Moreover,  we say that the standing wave $e^{i\omega t} \varphi$ is strongly unstable if for each $\epsilon >0$, there exists $u_0 \in \Sigma(\mathbb{R}^{N})$ such that $\|u_0 - \varphi\|_{\Sigma(\mathbb{R}^{N})} <\epsilon$ and the solution $u(t)$ of \eqref{GP} with $u(0)=0$ blows up in finite time.

We have the following stability results for the standing waves of equation \eqref{GP}.
\begin{theorem}\label{Et}
Let $\gamma>0$, $N\geq 1$ and $0<b<\min\left\{2, N\right\}$.\\
(i) If $1<p<1+\frac{4-2b}{N}$, then $\mathcal{G}_{q}$ is $\Sigma(\mathbb{R}^{N})$-stable with respect to  \eqref{GP}.\\
(ii) If $p=1+\frac{4-2b}{N}$ and $q< \|Q\|^{2}_{L^{2}}$, then $\mathcal{G}_{q}$ is $\Sigma(\mathbb{R}^{N})$-stable with respect to  \eqref{GP}.\\
(iii) If $1+\frac{4-2b}{N}<p<{2}^{\circ}$, then for any fixed $r>0$ and $q<q_{0}$ given in the Theorem \ref{Lmp} we
have that the set $\mathcal{G}^{r}_{q}$ is $\Sigma(\mathbb{R}^{N})$-stable with respect to \eqref{GP}.
\end{theorem}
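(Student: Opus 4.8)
The plan is to run the classical variational (Cazenave--Lions) stability argument, arguing by contradiction and feeding the conservation laws into the compactness of minimizing sequences already established in Theorems \ref{CMm}, \ref{CC} and \ref{Lmp}. Suppose the relevant set $\mathcal{M}$ (namely $\mathcal{G}_q$ in parts (i)--(ii), or $\mathcal{G}^r_q$ in part (iii)) is not $\Sigma$-stable. Then there exist $\epsilon_0>0$, a sequence $u_{0,n}\in\Sigma$ with $\inf_{\varphi\in\mathcal{M}}\|u_{0,n}-\varphi\|_\Sigma\to0$, and times $t_n$ such that $\inf_{\varphi\in\mathcal{M}}\|u_n(t_n)-\varphi\|_\Sigma\ge\epsilon_0$, where $u_n$ solves \eqref{GP} with datum $u_{0,n}$. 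Since every element of $\mathcal{M}$ has energy equal to the relevant infimum $d_*$ (either $I_q$ or $I^r_q$) and mass $q$, and $E$ and $\|\cdot\|^2_{L^2}$ are continuous on $\Sigma$, the assumption $\inf_{\varphi\in\mathcal{M}}\|u_{0,n}-\varphi\|_\Sigma\to0$ forces $E(u_{0,n})\to d_*$ and $\|u_{0,n}\|^2_{L^2}\to q$. The conservation of energy and charge from Proposition \ref{WP} then gives $E(u_n(t_n))\to d_*$ and $\|u_n(t_n)\|^2_{L^2}\to q$, so that after the harmless rescaling $v_n:=\sqrt{q/\|u_n(t_n)\|^2_{L^2}}\,u_n(t_n)$ (whose factor tends to $1$) the sequence $\{v_n\}$ is an exact minimizing sequence for the corresponding variational problem.

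For parts (i) and (ii) the solutions $u_n$ are defined for all time: in the subcritical case this is the global well-posedness noted after Proposition \ref{WP}, while in the critical case $p=1+\frac{4-2b}{N}$ every datum close to $\mathcal{G}_q$ has mass near $q<\|Q\|^2_{L^2}$, hence the corresponding solution is global by Theorem \ref{SC}(i). With $\{v_n\}$ a minimizing sequence for $I_q$, the relative compactness in Theorem \ref{CMm}(i) (resp. Theorem \ref{CC}) produces a subsequence and some $\psi\in\mathcal{G}_q$ with $v_n\to\psi$ in $\Sigma$; since the rescaling factors tend to $1$ this yields $u_n(t_n)\to\psi$, whence $\inf_{\varphi\in\mathcal{M}}\|u_n(t_n)-\varphi\|_\Sigma\to0$, contradicting $\ge\epsilon_0$.

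The genuinely new point is part (iii), where $\mathcal{G}^r_q$ is only a \emph{local} minimizer constrained to the ball $B_r$, so one must (a) guarantee that $u_n$ exists globally and (b) ensure that $u_n(t_n)$ stays in $B_r$ so that $v_n$ is admissible for $I^r_q$. Both follow from a trapping (potential-well) argument, which I expect to be the main obstacle. The crux is the strict energy gap
\[
I^r_q < \inf\{E(u) : u\in S_q,\ \|u\|^2_H=r\},
\]
available for $q<q_0$ because Theorem \ref{Lmp}(ii) places the minimizer strictly inside $B_r$ (its $H$-norm is bounded away from $r$). Taking $u_0$ with $\|u_0\|^2_H<r$ and $E(u_0)$ near $I^r_q$, suppose by continuity of $t\mapsto\|u(t)\|^2_H$ there were a first exit time $t_*$ with $\|u(t_*)\|^2_H=r$; then $u(t_*)$ lies on the boundary sphere while, by conservation, $E(u(t_*))=E(u_0)$ stays below the boundary infimum, contradicting the gap. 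Hence $\|u(t)\|^2_H<r$ for all $t$, which simultaneously confines the solution to $B_r$ and, since $\|\nabla u(t)\|_{L^2}\le\|u(t)\|_H$ remains bounded, rules out blow up and yields global existence through Proposition \ref{WP}.

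With the trapping in hand, $\{v_n\}$ is a bona fide minimizing sequence for $I^r_q$ lying in $S_q\cap B_r$ for large $n$ (the confinement keeps $\|u_n(t_n)\|^2_H$ bounded away from $r$, so the rescaling by a factor tending to $1$ preserves membership), and Theorem \ref{Lmp}(i) gives a convergent subsequence $v_n\to\psi\in\mathcal{G}^r_q$ in $\Sigma$; the rescaling remark again yields $u_n(t_n)\to\psi$, contradicting $\inf_{\varphi\in\mathcal{M}}\|u_n(t_n)-\varphi\|_\Sigma\ge\epsilon_0$. The remaining checks are routine: the continuity of $E$ on $\Sigma$ (immediate from \eqref{GNI} and the embedding $\Sigma\hookrightarrow L^{p+1}$) and the extraction of the limiting phase. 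The one point deserving care is the \emph{strictness} of the gap inequality above rather than a mere $\le$; together with the first-exit-time argument, this strictness is precisely where the local-minimization case (iii) departs from the standard global argument used in (i)--(ii).
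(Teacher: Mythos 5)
Your proposal is correct and takes essentially the same route as the paper: argue by contradiction, use conservation of charge and energy (plus a harmless $L^2$-rescaling, where the paper instead normalizes $\|u_{0,n}\|^{2}_{L^{2}}=q$ at the outset) to turn $u_{n}(t_{n})$ into a minimizing sequence, and conclude via the compactness statements of Theorems \ref{CMm}, \ref{CC} and \ref{Lmp}. Your trapping step in case (iii) is exactly the paper's argument in sequential form: the paper takes a first crossing time $t_{n}^{\ast}$ with $\|u_{n}(t_{n}^{\ast})\|^{2}_{H}=r$, notes that $\{u_{n}(t_{n}^{\ast})\}$ is then a minimizing sequence whose limit would be a minimizer on the boundary sphere, and derives the contradiction from Lemma \ref{Mlh}(ii) --- which is also the correct source for your strict energy gap (rather than Theorem \ref{Lmp}(ii), whose ``minimizer lies strictly inside $B_{r}$'' conclusion itself rests on Lemma \ref{Mlh}(ii) and does not by itself give strictness of the gap without the compactness of minimizing sequences).
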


For instability of standing wave solution of \eqref{GP}, we have the following result.
\begin{theorem}\label{INSF}
Let $N\geq 1$, $0<b<\min \{2,N\}$, $\omega>-\gamma N$ and $1+\frac{4-2b}{N} \leq p <2^{\circ}$. Let $\varphi\in \mathcal{M}_{\omega}$. \\
(i) If $d_n\geq d_\omega$, then the standing wave $e^{i\omega t} \varphi$ is strongly unstable in $\Sigma(\mathbb{R}^{N})$.\\
(ii) If $d_n <d_\omega$, then there exists $\delta>0$ and an initial data $u_0$ with $\|u_0 - \varphi\|_{\Sigma}>\delta$ such that the corresponding solution blows up in finite time.
\end{theorem}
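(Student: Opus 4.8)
\emph{Strategy.} The plan is to reduce both statements to the blow-up criterion of Theorem \ref{prop blowup}(i): it suffices to exhibit initial data lying in the invariant set $K_-$, since such data blow up in finite time. The whole argument is driven by the scalar dilation $\varphi_\lambda := \lambda\varphi$, $\lambda\geq 1$, together with two structural facts about $\varphi\in\mathcal{M}_\omega$. First, since every minimizer of $d_\omega$ solves \eqref{Sp}, $\varphi$ is a genuine critical point of $S_\omega$ on $\Sigma$; hence $K_\omega(\varphi)=0$, and differentiating $S_\omega(\lambda^{N/2}\varphi(\lambda\,\cdot))$ at $\lambda=1$—a quantity equal to $I(\varphi)$ which vanishes because $S'_\omega(\varphi)=0$—yields the Pohozaev-type identity $I(\varphi)=0$, i.e. $\|\nabla\varphi\|_{L^2}^2-\gamma^2\|x\varphi\|_{L^2}^2 = \frac{N(p-1)+2b}{2(p+1)}P(\varphi)$. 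Second, $K_\omega(\varphi)=0$ reads $\|\nabla\varphi\|_{L^2}^2+\gamma^2\|x\varphi\|_{L^2}^2+\omega\|\varphi\|_{L^2}^2 = P(\varphi)$.

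Next I would evaluate the three functionals along $\varphi_\lambda$. Writing $\mu:=\tfrac{N(p-1)}{2}+b$ (so $\mu\geq 2$ when $p\geq 1+\tfrac{4-2b}{N}$) and using the two identities above, a direct computation gives, for all $\lambda>0$,
\begin{align*}
K_\omega(\varphi_\lambda) &= P(\varphi)\,\lambda^2\bigl(1-\lambda^{p-1}\bigr),\\
I(\varphi_\lambda) &= \tfrac{\mu}{p+1}\,P(\varphi)\,\lambda^2\bigl(1-\lambda^{p-1}\bigr),\\
S_\omega(\varphi_\lambda) &= P(\varphi)\Bigl(\tfrac{\lambda^2}{2}-\tfrac{\lambda^{p+1}}{p+1}\Bigr),
\end{align*}
whence $\tfrac{d}{d\lambda}S_\omega(\varphi_\lambda)=P(\varphi)\,\lambda\bigl(1-\lambda^{p-1}\bigr)$. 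Since $P(\varphi)>0$ and $p>1$, for every $\lambda>1$ we obtain simultaneously $K_\omega(\varphi_\lambda)<0$ and $I(\varphi_\lambda)<0$, while $\lambda\mapsto S_\omega(\varphi_\lambda)$ is strictly decreasing on $(1,\infty)$ with $S_\omega(\varphi_1)=S_\omega(\varphi)=d_\omega$ and $S_\omega(\varphi_\lambda)\to-\infty$ as $\lambda\to\infty$.

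For part (i), $d_n\geq d_\omega$ forces $d=d_\omega$. For $\lambda>1$ close to $1$ we then have $S_\omega(\varphi_\lambda)<d_\omega=d$, $K_\omega(\varphi_\lambda)<0$ and $I(\varphi_\lambda)<0$, i.e. $\varphi_\lambda\in K_-$; as $\|\varphi_\lambda-\varphi\|_\Sigma=(\lambda-1)\|\varphi\|_\Sigma\to 0$ when $\lambda\to1^+$, Theorem \ref{prop blowup}(i) produces blow-up for data arbitrarily close to $\varphi$, which is strong instability. For part (ii), $d_n<d_\omega$ forces $d=d_n$. By the monotonicity and limits just recorded there is a unique $\lambda_*>1$ with $S_\omega(\varphi_{\lambda_*})=d_n$; for every $\lambda>\lambda_*$ one has $\varphi_\lambda\in K_-$ (now $S_\omega(\varphi_\lambda)<d_n=d$), so the solution blows up, while $\|\varphi_\lambda-\varphi\|_\Sigma=(\lambda-1)\|\varphi\|_\Sigma>(\lambda_*-1)\|\varphi\|_\Sigma=:\delta>0$. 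This yields blow-up data at a fixed positive distance, as asserted.

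\emph{Main obstacle.} The only delicate input is the identity $I(\varphi)=0$: it places $\varphi$ exactly on the common boundary $\{K_\omega=0,\ I=0\}$ separating the invariant regions, which is precisely what makes the dilation push $\varphi$ into $K_-$ with all three strict inequalities appearing at once for $\lambda>1$. Everything else is a one-variable monotonicity analysis, uniform in $p\geq 1+\tfrac{4-2b}{N}$ (the exponent $\mu\geq 2$ enters only the bookkeeping, not the conclusion), and the genuine dynamical content—that membership in $K_-$ forces finite-time blow-up—is supplied by Theorem \ref{prop blowup}(i).
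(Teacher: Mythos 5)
Your proposal is correct and follows essentially the same route as the paper: the same scalar dilation $\lambda\varphi$, the same facts $K_\omega(\varphi)=I(\varphi)=0$ (the paper's Lemma \ref{lem poho}, which you rederive via the Pohozaev/scaling identity), the same monotonicity relation $\frac{d}{d\lambda}S_\omega(\lambda\varphi)=\lambda^{-1}K_\omega(\lambda\varphi)$, and the same reduction to Theorem \ref{prop blowup}(i) via membership in $K_-$, with $\lambda\to 1^+$ for part (i) and $\lambda>\lambda_*$ for part (ii). The only cosmetic difference is that you factor the functionals explicitly as $P(\varphi)\lambda^2(1-\lambda^{p-1})$ where the paper argues via uniqueness of the nonzero root $\lambda=1$.
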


This paper is organized as follows. In Section \ref{S:1}, the sharp condition for global existence is established (Theorem \ref{SC}).
In Section \ref{S:2}, we construct and classify finite time blow up solutions at the minimal mass threshold. In Section \ref{S:2/2} we prove the existence of a minimizer for $d_{\omega}$. Section \ref{S:2/3} is devoted to the proof of Theorem \ref{prop blowup}. Section \ref{S:3} contains the proof of Theorems \ref{CMm} and \ref{CC}. In Section \ref{S:4}, we establish the proof of Theorem \ref{Lmp}.
Finally, Theorems \ref{Et} and \ref{INSF}  are proved in Section \ref{S:5}. In Appendix \ref{S:8},  we prove a uniqueness result for  \eqref{Sp}.

{\bf Notation.} 
 The space $L^{2}(\mathbb{R}^{N},\mathbb{C})$  will be denoted  by $L^{2}$ and its norm by $\|\cdot\|_{L^{2}}$. This
space will be equipped with the  real scalar product
\begin{equation*}
\left(u,v\right)_{L^{2}}=\text{Re}\int_{\mathbb{R}^{N}}u\,\overline{v}\,dx\quad u,v\in L^{2}(\mathbb{R}^{N},\mathbb{C}).
\end{equation*}
The space $L^{p}(\mathbb{R}^{N})$, denoted by $L^{p}$ for shorthand, is equipped with the norm $L^{p}$. Throughout this paper, the letter $C$ denotes a constant which may vary from line to line.

\section{The critical mass-case : sharp existence}  
\label{S:1}
The aim of this section is to prove Theorem \ref{SC}. First we observe
\begin{remark}
(i) Let $u\in \Sigma(\mathbb{R}^{N})$. Then the following estimate holds:
\begin{equation}\label{HI}
\int_{\mathbb{R}^{N}}|u|^{2}dx\leq \frac{2}{N}\left(\int_{\mathbb{R}^{N}}|\nabla u|^{2}dx\right)^{\frac{1}{2}}\left(\int_{\mathbb{R}^{N}}|x|^{2}| u|^{2}dx\right)^{\frac{1}{2}}.
\end{equation}
Notice that $2/N$ is the best constant for the inequality \eqref{HI}.\\
(ii) If $Q\in H^{1}(\mathbb{R}^{N})$ satisfies \eqref{Izx}, then the  following identity holds:
\begin{equation}\label{Pi}
\left( \frac{N+2-b}{N}\right)\int_{\mathbb{R}^{N}}|\nabla Q|^{2}dx=\int_{\mathbb{R}^{N}}|x|^{-b}|Q|^{2+\frac{4-2b}{N}}dx.
\end{equation}
\end{remark}
As in \cite{JZ2000}, which deal with the classical case $b=0$, we use the virial identity for the proof of Theorem \ref{SC}. From \eqref{HI}, to show that the $H^{1}(\mathbb{R}^{N})$- norm blow up, it suffices to show that the variance $f(t)$, which is defined by
\begin{equation*}
f(t):=\int_{\mathbb{R}^{N}}|x|^{2}|u(x,t)|^{2}dx
\end{equation*}
vanishes as $t\rightarrow \tau$ for some $\tau<\infty$.

\begin{lemma}\label{L1} 
Let $u$ be a solution of \eqref{GP} on an interval $I=[0, T)$. Then the variance $f$ is the class $C^{2}$ on $I$ and 
satisfies the following identities:
\begin{align*}
f^{\prime}(t)&= 4\mathrm{Im}\int_{\mathbb{R}^{N}}\overline{u}(x,t)(\nabla u(x,t)\cdot x)\,dx, \\ 
f^{\prime\prime}(t)&=16E(u(t))+\frac{4}{p+1}\left(N-Np-2b+4\right)\int_{\mathbb{R}^{N}}|x|^{-b}| u(x,t)|^{p+1}dx-16\gamma^{2}f(t).
\end{align*}
\end{lemma}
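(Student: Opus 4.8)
The plan is to establish both identities by the standard virial computation, treating $f'$ and $f''$ in turn, and to justify the formal manipulations by a regularization argument. I would work first at the level of a smooth, rapidly decaying solution (obtained by approximating $u_{0}$ in $\Sigma(\mathbb{R}^{N})$ by Schwartz data and invoking the continuous dependence from Proposition \ref{WP}), perform all differentiations under the integral sign and integrations by parts freely, and then pass to the limit in $\Sigma(\mathbb{R}^{N})$; the $C^{2}$ regularity of $f$ on $I$ follows from the resulting formulas once the right-hand sides are recognized to be continuous in $t$.

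For $f'$, I would differentiate $f(t)=\int|x|^{2}|u|^{2}\,dx$ and use $\partial_{t}|u|^{2}=2\,\mathrm{Re}(\overline{u}\,u_{t})$ together with the equation in the form $u_{t}=i(\Delta u-\gamma^{2}|x|^{2}u+|x|^{-b}|u|^{p-1}u)$. Since the potential term $-\gamma^{2}|x|^{2}u$ and the nonlinear term $|x|^{-b}|u|^{p-1}u$ are real multiples of $u$, they contribute nothing to $\mathrm{Re}(\overline{u}\,u_{t})=-\mathrm{Im}(\overline{u}\,\Delta u)$, so that $f'(t)=-2\,\mathrm{Im}\int|x|^{2}\,\overline{u}\,\Delta u\,dx$. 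A single integration by parts, using $\nabla(|x|^{2}\overline{u})=2x\overline{u}+|x|^{2}\nabla\overline{u}$ and discarding the purely real term $\int|x|^{2}|\nabla u|^{2}\,dx$, yields the first identity $f'(t)=4\,\mathrm{Im}\int\overline{u}\,(x\cdot\nabla u)\,dx$.

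For $f''$, I would differentiate $V(t):=\mathrm{Im}\int\overline{u}\,(x\cdot\nabla u)\,dx$, where $f'=4V$. Integrating by parts to move $\nabla$ off $u_{t}$ and using $\overline{u_{t}}=-i\overline{G}$ with $G:=\Delta u-\gamma^{2}|x|^{2}u+|x|^{-b}|u|^{p-1}u$, the two resulting terms combine into
\[
V'(t)=-2\,\mathrm{Re}\int_{\mathbb{R}^{N}}\overline{G}\,(x\cdot\nabla u)\,dx-N\,\mathrm{Re}\int_{\mathbb{R}^{N}}\overline{u}\,G\,dx.
\]
I would then split $G$ into its three pieces and evaluate each contribution by integration by parts: the Laplacian piece gives $2\int|\nabla u|^{2}\,dx$ (using $\mathrm{Re}\int(\Delta\overline{u})(x\cdot\nabla u)=\tfrac{N-2}{2}\int|\nabla u|^{2}$), the harmonic piece gives $-2\gamma^{2}\int|x|^{2}|u|^{2}\,dx$ (using $\mathrm{Re}[\overline{u}(x\cdot\nabla u)]=\tfrac12\,x\cdot\nabla|u|^{2}$ and $\nabla\cdot(|x|^{2}x)=(N+2)|x|^{2}$), and the inhomogeneous nonlinear piece gives $\tfrac{N-Np-2b}{p+1}\int|x|^{-b}|u|^{p+1}\,dx$ (using $|u|^{p-1}\mathrm{Re}[\overline{u}(x\cdot\nabla u)]=\tfrac{1}{p+1}\,x\cdot\nabla|u|^{p+1}$ and $\nabla\cdot(|x|^{-b}x)=(N-b)|x|^{-b}$). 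Multiplying $V'$ by $4$ and rewriting $8\int|\nabla u|^{2}+8\gamma^{2}\int|x|^{2}|u|^{2}$ through $16E(u)$ and $16\gamma^{2}f(t)$ via \eqref{Es} produces the stated formula, the nonlinear coefficient becoming $\tfrac{4}{p+1}(N-Np-2b+4)$ once the $-\tfrac{16}{p+1}$ contributed by the energy is accounted for.

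The main obstacle is not the algebra but the rigorous justification of the integrations by parts and of the differentiation under the integral sign for $\Sigma$-solutions, which a priori lie only in $H^{1}$ with $|x|u\in L^{2}$. The weight $|x|^{-b}$ is locally integrable since $b<N$, and the relevant nonlinear integrals are finite by the Gagliardo--Nirenberg inequality \eqref{GNI}; but controlling the boundary terms at infinity and near the origin, and legitimizing $\partial_{t}$ inside the integral, is the delicate point, and this is precisely what the approximation by smooth decaying solutions and the subsequent limiting argument are designed to handle.
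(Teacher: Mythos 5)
Your proposal is correct: the algebra checks out in every step (the Laplacian piece $2\int|\nabla u|^{2}$, the harmonic piece $-2\gamma^{2}\int|x|^{2}|u|^{2}$, the inhomogeneous piece $\tfrac{N-Np-2b}{p+1}\int|x|^{-b}|u|^{p+1}$, and the final reassembly through $16E(u)-16\gamma^{2}f(t)$ all match), and your regularization-and-limit strategy is the standard way to make the formal manipulations rigorous for $\Sigma$-solutions. The paper itself omits the proof, stating only that it follows the same lines as the virial computations in \cite{VCGG,LGF}, which is precisely the argument you have written out.
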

This result can be proved along the same lines  as in \cite{VCGG, LGF} and hence omitted. Notice that if $p=1+\frac{4-2b}{N}$ in the previous lemma, then $f^{\prime\prime}(t)=16E(u_{0})-16\gamma^{2}f(t)$. Throughout the rest of this section we assume that $p=1+\frac{4-2b}{N}$.
\begin{lemma}\label{L2}
Let $u_{0}\neq0$ be such that $f(0)\geq 2\gamma^{-2}E(u_{0})$.  Then the solution $u$ of \eqref{GP} corresponding to $u_{0}$ blows up in finite time.
\end{lemma}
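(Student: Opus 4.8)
The plan is to exploit that, in the critical case $p=1+\frac{4-2b}{N}$, the variance $f$ satisfies the \emph{closed} second-order linear ODE
\[
f''(t) = 16E(u_0) - 16\gamma^2 f(t),
\]
noted right after Lemma \ref{L1} (it combines the virial identity with conservation of energy, the middle term vanishing exactly when $p=1+\frac{4-2b}{N}$). This is an inhomogeneous constant-coefficient equation, solvable in closed form. Writing $a:=\gamma^{-2}E(u_0)$, every solution on the maximal interval $[0,T)$ has the form
\[
f(t) = a + c_1\cos(4\gamma t) + c_2 \sin(4\gamma t), \qquad c_1 = f(0)-a, \quad c_2 = \frac{f'(0)}{4\gamma},
\]
and I would immediately rewrite it in amplitude–phase form $f(t)=a+R\cos(4\gamma t-\phi)$ with $R=\sqrt{c_1^2+c_2^2}\ge 0$.

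The quantitative heart of the argument is to show that the hypothesis $f(0)\ge 2\gamma^{-2}E(u_0)=2a$ forces $R\ge a$, so that the minimal value $a-R$ of $f$ is nonpositive. Indeed the hypothesis gives $c_1=f(0)-a\ge a$. If $a\le 0$ then $R\ge 0\ge a$ trivially; if $a>0$ then $R\ge|c_1|\ge c_1\ge a$. In both cases $\min_t f(t)=a-R\le 0$.

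To conclude finite-time blow-up I would argue by contradiction. Suppose the solution were global, $T=\infty$; then the ODE, and hence the explicit sinusoid, would be valid for all $t\ge 0$. At the first time $\tau>0$ where $\cos(4\gamma t-\phi)=-1$ (which is finite, lying within one period) one has $f(\tau)=a-R\le 0$. But $f(t)=\int_{\mathbb{R}^N}|x|^2|u(t)|^2\,dx\ge 0$ for every $t$: if $f(\tau)<0$ this is an outright contradiction, while if $f(\tau)=0$ then $u(\tau)\equiv 0$, contradicting $\|u(\tau)\|_{L^2}=\|u_0\|_{L^2}>0$ (charge conservation and $u_0\neq 0$). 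Therefore $T<\infty$.

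Finally, to match the blow-up alternative of Proposition \ref{WP}, I would invoke the uncertainty-type inequality \eqref{HI}: from $\|u_0\|_{L^2}^2=\|u(t)\|_{L^2}^2\le \frac{2}{N}\|\nabla u(t)\|_{L^2}\,f(t)^{1/2}$ we obtain $\|\nabla u(t)\|_{L^2}\ge \frac{N\|u_0\|_{L^2}^2}{2\,f(t)^{1/2}}$, which diverges precisely as $f(t)\to 0$, so that $\lim_{t\to T}\|\nabla u(t)\|_{L^2}=\infty$. I expect the only delicate point to be the bookkeeping of the maximal existence time — running the argument as a contradiction against $T=\infty$ rather than asserting that the nonnegative quantity $f$ itself turns negative, together with the borderline case $f(\tau)=0$ handled through charge conservation; this is a matter of care rather than a genuine obstacle.
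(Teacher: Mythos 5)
Your proposal is correct and follows essentially the same route as the paper: solve the virial ODE $f''=16E(u_0)-16\gamma^2 f$ explicitly as a sinusoid about the mean $\gamma^{-2}E(u_0)$, use the hypothesis to show the amplitude dominates the mean so that $f$ reaches zero in finite time, and conclude gradient blow-up via \eqref{HI}. Your extra bookkeeping (the contradiction against $T=\infty$ and the borderline case $f(\tau)=0$ handled by mass conservation) only makes explicit what the paper's terser argument leaves implicit.
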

\begin{proof}Since $f^{\prime\prime}(t)=16E(u_{0})-16\gamma^{2}f(t)$, a straightforward calculation gives first 
\begin{equation}\label{Pii}
f(t)=r\mathrm{sin}(4\gamma t+\theta)+\gamma^{-2}E(u_{0}),
\end{equation}
where $r\geq 0$ and $\theta\in [0, 2\pi)$ are constants determined by $f(0)$ and $f^{\prime}(0)$.
We also have 
\begin{equation}\label{P2}
r^{2}=[f(0)-\gamma^{-2}E(u_{0})]^{2}+\frac{\gamma^{-2}}{16}[f^{\prime}(0)]^{2}.
\end{equation}
Since $f(0)\geq 2\gamma^{-2}E(u_{0})$, it follows that $r\geq \gamma^{-2}E(u_{0})$. Thus from \eqref{Pii} and \eqref{P2}, we see that there exists $\tau<\infty$ such that 
\begin{equation*}
\lim_{t\rightarrow\tau}f(t)=0.
\end{equation*}
Inequality \eqref{HI} implies that $\lim_{t\rightarrow\tau}\|\nabla u(t)\|^{2}_{L^{2}}=+\infty$. This shows that $u(x,t)$ blows up in finite time, which completes the proof of lemma.
\end{proof}

Now we give the proof of Theorem \ref{SC}.
\begin{proof}[ \bf {Proof of Theorem \ref{SC}}] 
First, as noted in the introduction, we have that for every $u\in H^{1}(\mathbb{R}^{N})$,
\begin{equation}\label{Lk}
\int_{\mathbb{R}^{N}}|x|^{-b}|u|^{\frac{4-2b}{N}+2}dx\leq {\|Q\|^{-\left(\frac{4-2b}{N}\right)}_{L^{2}}}\left(\frac{2+N-b}{N}\right)\|\nabla u\|^{2}_{L^{2}}\|u\|^{\frac{4-2b}{N}}_{L^{2}}.
\end{equation}
Notice that ${\|Q\|^{-\left(\frac{4-2b}{N}\right)}_{L^{2}}}\left(\frac{2+N-b}{N}\right)$ is the best constant for the above inequality. 
Consider a local solution $u\in C([0,T), \Sigma(\mathbb{R}^{N}))$ of the Cauchy problem of \eqref{GP}, as given by Proposition \ref{WP}, where $[0,T)$ is the maximal existence time. In view of \eqref{Lk} and the conservation of charge and energy, it is clear that 
\begin{equation*}
\frac{1}{2}\|\nabla u(t)\|^{2}_{L^{2}}\left(1-\left(\frac{\|u_{0}\|_{L^{2}}}{\|Q\|_{L^{2}}}\right)^{\frac{4-2b}{N}}\right)+\frac{\gamma^{2}}{2}\int_{\mathbb{R}^{N}}|x|^{2}| u(x, t)|^{2}dx\leq E(u_{0}).
\end{equation*}
Now, since $\|u_{0}\|_{L^{2}}<\|Q\|_{L^{2}}$, it follows that $\|\nabla u(t)\|^{2}_{L^{2}}$ is bounded for all $t\in [0,T)$. From Proposition \ref{WP} it yields that $u(x,t)$ globally exists in $t\in [0,+\infty)$, which completes the proof of Item (i). 

On the other hand, for $\lambda>0$ and $c\in \mathbb{C}$, $|c|\geq 1$, we take the initial date $u(0,t)=c\lambda^{\frac{N}{2}}Q(\lambda x)$. Clearly  $\|u_{0}\|_{L^{2}}=|c|\|Q\|_{L^{2}}\geq \|Q\|_{L^{2}}$. Now combining \eqref{Es} and \eqref{Pi}, it follows from straightforward calculations that 
\begin{equation*}
2E(u_{0})=\|\nabla Q\|^{2}_{L^{2}}|c|^{2}\lambda^{2}\left(1-|c|^{\frac{4-2b}{N}}\right)+\gamma^{2}f(0)\leq \gamma^{2}f(0).
\end{equation*}
Therefore, by Lemma \ref{L2} we have that $u(x,t)$ blows up in finite time, and this finishes the proof of theorem.
\end{proof}

\section{Classification of minimal mass blow up solutions}
\label{S:2}
In this section, we give the proof of Theorem \ref{CMm23}. For any function $u: \mathbb{R}^{N}\times I\rightarrow \mathbb{C}$, we define  
\begin{equation}\label{ul}
u_{L}(x,t)=\left(\frac{1}{\mathrm{cos}\,2\gamma t}\right)^{\frac{N}{2}}e^{-i\frac{\gamma}{2}|x|^{2}\mathrm{tan}\,\,2\gamma t} u\left(\frac{x}{\mathrm{cos}\,2\gamma t}, \frac{1}{2\gamma}\mathrm{tan}\,2\gamma t\right).
\end{equation}
Notice that $u_{L}$ is defined on the time interval $\mathrm{tan}^{-1}(I):=\left\{\mathrm{tan}^{-1}(t), t\in I\right\}$ and $u_{L}(x,0)=u(x,0)$; for more details we refer to \cite{TaoC, RCIH}. We first prove a key lemma to obtain Theorem \ref{CMm23}.

\begin{lemma}\label{Lbn}
Let $\gamma>0$.\\
(i) Assume that $u$ is a solution of the free (i.e.,zero-potential) inhomogeneous nonlinear Schr\"odinger equation 
\begin{equation}\label{Lp}
i\partial_{t}u+\Delta u+ |x|^{-b}|u|^{p-1}u=0,
\end{equation}
on a interval $I$. Then the function $u_{L}$ defined in \eqref{ul} solves the inhomogeneous nonlinear Schr\"odinger equation with attractive harmonic potential
\begin{equation*}
i\partial_{t}u_{L}+\Delta u_{L}- \gamma^{2}|x|^{2}u_{L}+|\mathrm{cos}\, 2\gamma t|^{\frac{N}{2}(p-1)-2+b}|x|^{-b}|u_{L}|^{p-1}u_{L}=0,
\end{equation*}
with $\|u_{L}\|_{L^{2}}=\|u\|_{L^{2}}$. In particular,  if $p=1+\frac{4-2b}{N}$, then $u_{L}$ is a solution of \eqref{GP} on $\mathrm{tan}^{-1}(I)$. \\
(ii) Reciprocally, assume that $u\in \Sigma(\mathbb{R}^{N})$ is a solution of \eqref{GP} with $p=1+\frac{4-2b}{N}$, then the function $u_{L^{-1}}$, defined by 
\begin{equation}\label{uli}
u_{L^{-1}}(x,t)=\frac{1}{(1+4(\gamma t)^{2})^{\frac{N}{2}}}e^{i\frac{4\gamma^{2}t}{1+4(\gamma t)^{2}}{|x|^{2}}}u\left(\frac{x}{\sqrt{1+4\gamma t^{2}}}, \frac{1}{2\gamma}\mathrm{tan}^{-1}\,2\gamma t\right)
\end{equation}
solves \eqref{Lp} with $p=1+\frac{4-2b}{N}$.
\end{lemma}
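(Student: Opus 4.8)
The plan is to prove part (i) by direct substitution, verifying that the lens transform \eqref{ul} intertwines the free Schr\"odinger operator with the one carrying the harmonic potential. Write $\rho(t) := (\cos 2\gamma t)^{-1}$ and $\phi(x,t) := -\tfrac{\gamma}{2}|x|^{2}\tan 2\gamma t$, so that $u_{L}(x,t) = \rho^{N/2}e^{i\phi}\,u(y,s)$ with the new variables $y := \rho x$ and $s := \tfrac{1}{2\gamma}\tan 2\gamma t$. The whole argument reduces to establishing the operator identity
\begin{equation*}
i\partial_{t}u_{L} + \Delta u_{L} - \gamma^{2}|x|^{2}u_{L} = \rho^{\frac{N}{2}+2}\,e^{i\phi}\left(i\partial_{s}u + \Delta_{y}u\right)(y,s),
\end{equation*}
after which part (i) follows by inserting the free equation \eqref{Lp}, that is $(i\partial_{s}u + \Delta_{y}u)(y,s) = -(|y|^{-b}|u|^{p-1}u)(y,s)$, and re-expressing the right-hand side through $u_{L}$.

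First I would compute the two ingredients by the chain rule, recording the elementary identities $\dot\rho = 2\gamma\rho\tan 2\gamma t$, $\dot s = \rho^{2}$, $\partial_{t}y = 2\gamma\tan 2\gamma t\,y$, together with $\nabla_{x}\phi = -\gamma\tan 2\gamma t\,x$, $\Delta_{x}\phi = -N\gamma\tan 2\gamma t$ and $|\nabla_{x}\phi|^{2} = \gamma^{2}\tan^{2}2\gamma t\,|x|^{2}$. Using $\partial_{t}[u(y,s)] = \rho^{2}\partial_{s}u + 2\gamma\tan 2\gamma t\,(y\cdot\nabla_{y}u)$ one finds
\begin{equation*}
i\partial_{t}u_{L} = \rho^{\frac{N}{2}}e^{i\phi}\left[\,i\rho^{2}\partial_{s}u + 2i\gamma\tan 2\gamma t\,(y\cdot\nabla_{y}u) + iN\gamma\tan 2\gamma t\,u + \gamma^{2}\rho^{2}|x|^{2}u\,\right],
\end{equation*}
while expanding $\Delta_{x}(e^{i\phi}\cdot)$ and using $\nabla_{x}[u(\rho x,s)] = \rho(\nabla_{y}u)(y,s)$ gives
\begin{equation*}
\Delta u_{L} = \rho^{\frac{N}{2}}e^{i\phi}\left[\,\rho^{2}\Delta_{y}u - 2i\gamma\tan 2\gamma t\,(y\cdot\nabla_{y}u) - iN\gamma\tan 2\gamma t\,u - \gamma^{2}\tan^{2}2\gamma t\,|x|^{2}u\,\right].
\end{equation*}

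The decisive step is adding these two expressions. The convective terms $\pm 2i\gamma\tan 2\gamma t\,(y\cdot\nabla_{y}u)$ cancel, as do the two $iN\gamma\tan 2\gamma t\,u$ contributions, leaving $\rho^{N/2}e^{i\phi}[\rho^{2}(i\partial_{s}u+\Delta_{y}u) + \gamma^{2}(\rho^{2}-\tan^{2}2\gamma t)|x|^{2}u]$. Here is the crux: the Pythagorean identity $\rho^{2}-\tan^{2}2\gamma t = (\cos 2\gamma t)^{-2}-\tan^{2}2\gamma t = 1$ collapses the potential terms to exactly $\gamma^{2}|x|^{2}u_{L}$, which is precisely cancelled by $-\gamma^{2}|x|^{2}u_{L}$, yielding the intertwining identity. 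This coefficient matching is the point most prone to error, since it hinges on the exact time-derivative of the phase $\phi$ and on the scaling rate $\dot s = \rho^{2}$; a sign slip anywhere spoils the cancellation of the harmonic potential.

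It then remains to transform the nonlinearity. Writing $u(y,s) = \rho^{-N/2}e^{-i\phi}u_{L}$ and using $|y|^{-b} = \rho^{-b}|x|^{-b} = (\cos 2\gamma t)^{b}|x|^{-b}$, a direct substitution produces the factor $(\cos 2\gamma t)^{\frac{N}{2}(p-1)-2+b}$ in front of $|x|^{-b}|u_{L}|^{p-1}u_{L}$, which is exactly the claimed coefficient; in the mass-critical case $p = 1+\frac{4-2b}{N}$ the exponent $\frac{N}{2}(p-1)-2+b$ vanishes, so $u_{L}$ solves \eqref{GP}. The mass identity $\|u_{L}\|_{L^{2}} = \|u\|_{L^{2}}$ is immediate, since the Jacobian $dy = \rho^{N}dx$ cancels the amplitude relation $|u_{L}|^{2} = \rho^{N}|u|^{2}$. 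Finally, part (ii) follows by the identical computation run in the opposite direction: one checks that $u \mapsto u_{L^{-1}}$ in \eqref{uli} is the inverse of the lens map $u \mapsto u_{L}$, the time changes $s = \tfrac{1}{2\gamma}\tan 2\gamma t$ and $t \mapsto \tfrac{1}{2\gamma}\tan^{-1}2\gamma t$ being mutually inverse, so the same intertwining converts a solution of \eqref{GP} with critical $p$ into a solution of \eqref{Lp}.
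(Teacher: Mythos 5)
Your proposal is correct and follows essentially the same route as the paper: a direct chain-rule computation showing that the lens transform \eqref{ul} intertwines $i\partial_t+\Delta-\gamma^2|x|^2$ with the free operator, with the convective and phase terms cancelling and $\sec^2 2\gamma t-\tan^2 2\gamma t=1$ producing the harmonic potential, followed by the same bookkeeping of the factor $(\cos 2\gamma t)^{\frac{N}{2}(p-1)-2+b}$ on the nonlinearity. The only cosmetic differences are that you keep general $\gamma$ and state the computation as an explicit intertwining identity, whereas the paper normalizes $\gamma=\tfrac12$; both treat part (ii) as the same calculation run in reverse.
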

\begin{proof}
For simplicity, we assume that $\gamma=\frac{1}{2}$.  We can easily check that
\begin{align*}
\partial_{t}u_{L}(x,t)=\frac{e^{-i\frac{1}{4}|x|^{2}\mathrm{tan}\, t}}{\mathrm{cos}^{\frac{N}{2}+2}\, t}\left[\partial_{t}u -i\frac{|x|^{2}}{4}u+\mathrm{sin}\, t\,\, \nabla u \cdot x + \frac{N}{2}\mathrm{sin}\, t\,\,\mathrm{cos}\, t\,\, u\right]\left(\frac{x}{\mathrm{cos}\, t}, \mathrm{tan}\, t \right)
\end{align*}
and 
\begin{multline*}
\Delta u_{L}(x,t)=\frac{e^{-i\frac{1}{4}|x|^{2}\mathrm{tan}\, t}}{\mathrm{cos}^{\frac{N}{2}+2}\, t}\left[-\frac{|x|^{2}}{4}\mathrm{sin}^{2}\, t\, u-i\,\mathrm{sin}\, t\, \nabla u \cdot x -\frac{i}{2}N\mathrm{sin}\, t\,\mathrm{cos}\, t\, u+\Delta u\right]\left(\frac{x}{\mathrm{cos}\, t}, \mathrm{tan}\, t \right).
\end{multline*}
Thus, we see that 
\begin{align*}
i\, \partial_{t}u_{L}(x,t)+\Delta u_{L}(x,t)&=\frac{e^{-i\frac{1}{4}|x|^{2}\mathrm{tan}\, t}}{\mathrm{cos}^{\frac{N}{2}+2}\, t}\left[\frac{|x|^{2}}{4}\mathrm{cos}^{2}\,t\,u + i\,\partial_{t}u +\Delta u \right]\left(\frac{x}{\mathrm{cos}\, t}, \mathrm{tan}\, t \right)\\
&=\frac{e^{-i\frac{1}{4}|x|^{2}\mathrm{tan}\, t}}{\mathrm{cos}^{\frac{N}{2}+2}\, t}\left[\frac{|x|^{2}}{4}\mathrm{cos}^{2}\,t\,u -|x|^{-b}|u|^{p-1}u \right]\left(\frac{x}{\mathrm{cos}\, t}, \mathrm{tan}\, t \right)\\
&=\frac{|x|^{2}}{4}u_{L}(x,t)-|\mathrm{cos}\, t|^{\frac{N}{2}(p-1)-2+b}|x|^{-b}|u_{L}(x,t)|^{p-1}u_{L}(x,t).
\end{align*}
This proves the first statement of lemma. Similarly, the second statement of the lemma follows from a straightforward calculation. With this the lemma is proved
\end{proof}
It is important to note that the transforms \eqref{ul} and \eqref{uli} do not alter the initial data $u_{0}$; notice also that $\|u_{L}\|_{L^{2}}=\|u_{L^{-1}}\|_{L^{2}}=\|u\|_{L^{2}}$.  

Theorem \ref{CMm23} follows from the previous lemma and from the following result of Combet and Genoud \cite[Theorem 1]{VCGG}. 
\begin{proposition}\label{Ph}
Let $u_{0}\in H^{1}(\mathbb{R}^{N})$ with $\|u_{0}\|_{L^{2}}=\|Q\|_{L^{2}}$, where $Q$ is defined by \eqref{Izx}. Assume that the solution  $u$  of \eqref{Lp} blows up in finite time $T>0$. Then there exist $\theta_{0}\in \mathbb{R}$ and $\lambda>0$ such that 
\begin{equation*}
u(t)=S_{\lambda, \theta_{0}}(T-t) \quad \text{for every $t\in [0, T)$},
\end{equation*}
where $S_{\lambda, \theta_{0}}$ is defined by \eqref{AX}.
\end{proposition}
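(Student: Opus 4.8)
The plan is to follow Merle's classification scheme for minimal-mass blow-up, adapted to the inhomogeneous nonlinearity. The key structural feature is that the weight $|x|^{-b}$ destroys both translation and Galilean invariance, so at the critical exponent $p=1+\frac{4-2b}{N}$ the only symmetries of \eqref{Lp} are phase rotation, $L^2$-scaling, and the pseudo-conformal transformation; this is exactly why the conclusion reduces to the two-parameter family $S_{\lambda,\theta_0}$, and it removes the spatial-translation and boost modulation parameters that complicate the homogeneous case.

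First I would record the variational input. Writing $E_0(w):=\frac12\|\nabla w\|_{L^2}^2-\frac1{p+1}\int_{\mathbb{R}^N}|x|^{-b}|w|^{p+1}\,dx$ for the energy of the free equation \eqref{Lp}, the Pohozaev identity \eqref{Pi} gives $E_0(Q)=0$, and the sharp Gagliardo--Nirenberg inequality \eqref{Lk} yields, for every $w\in H^1$ with $\|w\|_{L^2}=\|Q\|_{L^2}$,
\[ E_0(w)=\frac12\|\nabla w\|_{L^2}^2-\frac1{p+1}\int_{\mathbb{R}^N}|x|^{-b}|w|^{p+1}\,dx\ge 0, \]
with equality precisely on the orbit $\{e^{i\theta}\mu^{N/2}Q(\mu\,\cdot)\}$, i.e. on the minimizers of the Weinstein functional $J$. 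I would also establish the companion compactness lemma (in the spirit of Hmidi--Keraani): if $(w_n)$ is bounded in $H^1$, $\|w_n\|_{L^2}\to\|Q\|_{L^2}$, and $\int|x|^{-b}|w_n|^{p+1}\,dx$ asymptotically saturates \eqref{Lk}, then suitable rescalings $\mu_n^{N/2}w_n(\mu_n\,\cdot)$ have a subsequence converging strongly in $H^1$, up to a phase, to $Q$. Here the absence of translation invariance is a genuine simplification: the weight $|x|^{-b}$ pins concentration at the origin, so no spatial-translation parameter can appear and the limiting profile is automatically centred.

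Next I would prove compactness up to the blow-up time. Since $E_0(u(t))=E_0(u_0)$ is conserved while $\|\nabla u(t)\|_{L^2}\to\infty$ as $t\to T$, dividing the energy identity by $\|\nabla u(t)\|_{L^2}^2$ forces
\[ \frac{1}{\|\nabla u(t)\|_{L^2}^2}\int_{\mathbb{R}^N}|x|^{-b}|u(t)|^{p+1}\,dx\longrightarrow \frac{p+1}{2}=\frac{N+2-b}{N}\qquad(t\to T), \]
so the normalized profiles $v(t)=\lambda(t)^{N/2}u(\lambda(t)\,\cdot,t)$, with $\lambda(t)=\|\nabla Q\|_{L^2}/\|\nabla u(t)\|_{L^2}$, asymptotically saturate \eqref{Lk}. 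By the previous step there is a modulation phase $\theta(t)$ with $e^{i\theta(t)}v(t)\to Q$ strongly in $H^1$ as $t\to T$; standard modulation theory then upgrades this to continuous, differentiable parameters and yields the pseudo-conformal blow-up rate $\|\nabla u(t)\|_{L^2}\sim C/(T-t)$, hence $\lambda(t)\sim c\,(T-t)$.

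The rigidity step is the last and hardest. Applying the pseudo-conformal transformation associated with \eqref{Lp} (the flat analogue of the lens transform of Lemma \ref{Lbn}, which at the exponent $p=1+\frac{4-2b}{N}$ is an exact symmetry) sends $u$ to a solution that is global toward $s=+\infty$ with mass $\|Q\|_{L^2}$, turning the blow-up of $u$ at $T$ into convergence to $e^{i\theta_0}Q$ (up to scaling) as $s\to\infty$. For the free critical equation the computation behind Lemma \ref{L1} with $\gamma=0$ gives the exact virial law
\[ \frac{d^2}{dt^2}\int_{\mathbb{R}^N}|x|^2|u(t)|^2\,dx = 16\,E_0(u_0), \]
so the variance $f(t)$ is a quadratic polynomial. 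The concentration to $Q$ established above forces $f(t)=\lambda(t)^2\int|y|^2|v(t)|^2\,dy\to 0$ as $t\to T$; since $f\ge 0$ this means $T$ is a double root, whence $f(t)=8E_0(u_0)(T-t)^2$ with $E_0(u_0)>0$ (the case $E_0=0$ would force $u(t)$ to be a genuine soliton for all $t$, which does not blow up). This identity fixes the scale $\lambda_0$ through $E_0(u_0)$ and, together with the phase of $f'$, determines the quadratic phase $e^{-i|x|^2/4(T-t)}$ appearing in \eqref{AX}; matching $u$ with $S_{\lambda_0,\theta_0}(T-t)$ at one time and invoking uniqueness for the Cauchy problem for \eqref{Lp} then gives the identity for all $t\in[0,T)$. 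I expect the principal difficulties to be (i) promoting the convergence to $Q$ to strong $H^1$ convergence of the full trajectory, not merely along a subsequence, while excluding any loss of mass to spatial infinity; and (ii) justifying the virial identity and the limit $f(t)\to 0$ for data only assumed to be in $H^1$, since finite variance of $u_0$ is not among the hypotheses. As in Merle's work, this last point requires a truncated-variance argument combined with the a priori $L^2$-concentration coming from the compactness step (or, equivalently, transferring the computation to the pseudo-conformally transformed solution, whose limiting profile $Q$ has finite variance).
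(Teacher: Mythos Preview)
The paper does not prove this proposition at all: it is quoted verbatim as \cite[Theorem 1]{VCGG} (Combet--Genoud) and used as a black box in the proof of Theorem~\ref{CMm23}. So there is no ``paper's own proof'' to compare against.

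That said, your outline is essentially the Combet--Genoud argument itself, which in turn adapts Merle's minimal-mass classification to the inhomogeneous nonlinearity. Your structural observation that the weight $|x|^{-b}$ kills translation and Galilean invariance, collapsing the modulation to phase and scaling only, is exactly the simplification exploited in \cite{VCGG}. The variational input (sharp Gagliardo--Nirenberg, $E_0(Q)=0$, the Hmidi--Keraani-type concentration-compactness with no spatial translation), the renormalization $v(t)=\lambda(t)^{N/2}u(\lambda(t)\cdot,t)$ converging to $Q$, and the exact virial law $f''=16E_0(u_0)$ with $T$ forced to be a double root are all correct and are the backbone of their proof. The two difficulties you flag---upgrading subsequential to full-trajectory convergence, and handling the virial/variance computations for $H^1$ data without an a priori $|x|u_0\in L^2$ assumption---are precisely the places where \cite{VCGG} do real work, via a truncated virial argument and the pseudo-conformal transform. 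Your sketch is therefore accurate as a roadmap, though turning it into a proof requires the technical machinery of that paper rather than anything contained in the present one.
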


Now we give the proof of Theorem \ref{CMm23}.
\begin{proof}[ \bf {Proof of Theorem \ref{CMm23}}]  Let $p=1+\frac{4-2b}{N}$. Assume that $u$ is a solution of the Cauchy problem \eqref{GP} such that $\|u_{0}\|_{L^{2}}=\|Q\|_{L^{2}}$ and $\lim_{t\rightarrow T}\|\nabla u(t)\|^{2}_{L^{2}}=+\infty$ with $T<\frac{\pi}{4\gamma}$. We set $v(x,t):=u_{L^{-1}}(x,t)$. From Lemma \ref{Lbn} (ii), we have that $v(x,t)$ is a solution of \eqref{Lp} with $v(x,t)=u_{0}$, which blows up in finite time $T^{\ast}:=\frac{1}{2\gamma} \mathrm{tan}\, (2\gamma T)$. By Proposition \ref{Ph} we know that there exist $\theta_{0}\in \mathbb{R}$ and $\lambda_{0}>0$ such that 
\begin{equation*}
v(t)=S_{\lambda, \theta_{0}}(x, T^{\ast}-t) \quad \text{for every $t\in [0, T^{\ast})$},
\end{equation*}
whence, again by Lemma \ref{Lbn} and from uniqueness result of Proposition \ref{WP}, it follows that
\begin{equation*}
u(t)=v_{L}(t)=\left(\frac{1}{\mathrm{cos}\,2\gamma t}\right)^{\frac{N}{2}}e^{-i\frac{\gamma}{2}|x|^{2}\mathrm{tan}\,\,2\gamma t}S_{\lambda, \theta_{0}}\left(\frac{x}{\mathrm{cos}\, 2\gamma t}, T^{\ast}-\frac{1}{2\gamma} \mathrm{tan}\, 2\gamma t\right).
\end{equation*}
Finally, since 
\begin{equation*}
T^{\ast}-\frac{1}{2\gamma} \mathrm{tan}\, (2\gamma t)=\frac{\mathrm{sin}\,2\gamma(T-t)}{2\gamma\mathrm{cos}\,2\gamma T\,\, \mathrm{cos}\,2\gamma t},
\end{equation*}
we see that
\begin{equation*}
u(t)=\left(\frac{1}{\mathrm{cos}\,2\gamma t}\right)^{\frac{N}{2}}e^{-i\frac{\gamma}{2}|x|^{2}\mathrm{tan}\,2\gamma t}S_{{\lambda_{0}}, \theta_{0}}\left(\frac{x}{\mathrm{cos}\,2\gamma t}, \frac{\mathrm{sin}\,2\gamma(T-t)}{2\gamma\mathrm{cos}\,2\gamma T\,\, \mathrm{cos}\,2\gamma t}\right),\end{equation*}
which completes of proof.
\end{proof}

\section{Existence of minimizers}
\label{S:2/2}

The aim this section is to prove Theorem \ref{prop GS}.
\begin{proof}[ \bf {Proof of Theorem \ref{prop GS}}] 
	Let $u \in \Sigma \backslash \{0\}$ be such that $K_\omega(u)=0$. We have $\|u\|^2_{H_\omega} = P(u)$. Using the Gagliardo-Nirenberg inequality
	\[
	P(u) \lesssim \|\nabla u\|_{L^2}^{\frac{N(p-1)}{2}+b} \|u\|_{L^2}^{p+1-\frac{N(p-1)}{2}-b}
	\]
	together with the Young's inequality, we have
	\[
	P(u) \leq C_1 (\|\nabla u\|^2_{L^2} + \|u\|^2_{L^2})^{\frac{p+1}{2}} \leq C_2 \|u\|_{H_\omega}^{p+1} = C_2 P(u)^{\frac{p+1}{2}}.
	\]
	This implies that
	\[
	P(u) >\left( \frac{1}{C_2} \right)^{\frac{2}{p-1}} >0.
	\]
	On the other hand,
	\[
	S_\omega(u) = \frac{1}{2} \|u\|^2_{H_\omega} -\frac{1}{p+1} P(u) =\frac{p-1}{2(p+1)}P(u) > \frac{p-1}{2(p+1)} \left( \frac{1}{C_2} \right)^{\frac{2}{p-1}} >0.
	\]
	Taking the infimum, we obtain $d_\omega>0$.
	
	Let $(u_n)_{n\geq 1}$ be a minimizing sequence of $d_\omega$. Since $K_\omega(u_n)=0$, we have $\|u_n\|^2_{H_\omega} = P(u_n)$ for all $n\geq 1$. Thus, 
	\[
	S_n(u_n) = \frac{p-1}{2(p+1)} \|u_n\|^2_{H_\omega} \rightarrow d_\omega \quad \text{as } n \rightarrow \infty.
	\]
	We infer that there exists a constant $C>0$ such that $\|u_n\|^2_{H_\omega} \leq \frac{2(p+1)}{p-1} d_\omega + C$ for all $n\geq 1$. For $\gamma>0$ and $\omega>-\gamma N$ fixed, $\|u\|^2_{H_\omega} \sim \|u\|^2_{\Sigma}$. This implies that the sequence $(u_n)_{n\geq 1}$ is a bounded in $\Sigma$. There exists $u_0 \in \Sigma$ such that up to a subsequence, we can suppose that $v_n \rightharpoonup u_0$ weakly in $\Sigma$. Since $\Sigma \rightarrow L^{r+1}(\mathbb{R}^N)$ compact (see \cite[Lemma 3.1]{JZ2000}) for $1\leq r<1+\frac{4}{N-2}$ if $N\geq 3$ and $1\leq r<\infty$ if $N=1,2$. This implies that $u_n \rightarrow u_0$ strongly in $L^{r+1}$ with $r$ as above. We now show that $u_0$ is a minimizer of $d_\omega$. Since $u_n \rightharpoonup u_0$ weakly in $\Sigma$, we have
	\[
	\|u_0\|^2_{H_\omega} \leq \liminf_{n\rightarrow \infty} \|u_n\|^2_{H_\omega}.
	\]
	We now claim that for $N\geq 1, 0<b<\min\{2,N\}$ and $1<p<2^{\circ}$,
\begin{equation}
\int_{\mathbb{R}^{N}}|x|^{-b}|u_{n}|^{p+1}dx\rightarrow \int_{\mathbb{R}^{N}}|x|^{-b}|u|^{p+1}dx\quad \text{as $n\rightarrow \infty$.}  \label{conver}
\end{equation}
We have 
\begin{multline*}
\left|\int_{\mathbb{R}^{N}}|x|^{-b}|u_{n}|^{p+1}dx - \int_{\mathbb{R}^{N}}|x|^{-b}|u|^{p+1}dx \right| \leq \||x|^{-b} (|u_n|^{p+1} - |u_0|^{p+1})\|_{L^1} \\
\leq \||x|^{-b} (|u_n|^{p+1} - |u_0|^{p+1})\|_{L^1(B)} + \||x|^{-b} (|u_n|^{p+1} - |u_0|^{p+1})\|_{L^1(B^c)},
\end{multline*}
where $B$ is the unit ball in $\mathbb{R}^N$ and $B^c=\mathbb{R}^N \backslash B$. 

On $B$, we bound
\[
\||x|^{-b} (|u_n|^{p+1} - |u_0|^{p+1})\|_{L^1(B)} \lesssim \||x|^{-b}\|_{L^\delta(B)} \||u_n|^{p+1} - |u_0|^{p+1}\|_{L^\mu}
\]
provided $\delta, \mu \geq 1, 1=\frac{1}{\delta} + \frac{1}{\mu}$. The term $\||x|^{-b}\|_{L^\delta(B)}$ is finite provided $\frac{N}{\delta}>b$. Thus, $\frac{1}{\delta} >\frac{b}{N}$, and $\frac{1}{\mu}=1-\frac{1}{\delta} <\frac{N-b}{N}$. We next bound
\[
\||u_n|^{p+1} - |u_0|^{p+1}\|_{L^\mu} \lesssim (\|u_n\|^p_{L^\tau} + \|u_0\|^p_{L^\tau}) \|u_n-u_0\|_{L^\sigma}
\]
provided
\begin{align}
\frac{p}{\tau}+\frac{1}{\sigma} = \frac{1}{\mu} < \frac{N-b}{N}. \label{in ball}
\end{align}
Using the embedding $\Sigma \rightarrow L^{r+1}(\mathbb{R}^N)$ for $1\leq r<1+\frac{4}{N-2}$ if $N\geq 3$ and $1\leq r<\infty$ if $N=1,2$, we are able to choose $\tau \in \left[2,\frac{2N}{N-2}\right)$ if $N\geq 3$ and $\tau \in [2,\infty)$ if $N=1,2$ so that $\|u_n\|_{L^\tau} \lesssim \|u_n\|_{\Sigma}$ (similarly for $u_0$). In the case $N\geq 3$, we have
\[
\frac{p(N-2)}{2N} + \frac{1}{\sigma} <\frac{N-b}{N}.
\]
Since $u_n \rightarrow u_0$ in $L^{r+1}$ with $1 \leq r <1 +\frac{2}{N-2}$, it follows that
\[
\frac{p(N-2)}{2N} + \frac{N-2}{2N} < \frac{N-b}{N}. 
\]
This condition is satisfied since $p<1 +\frac{4-2b}{N-2}$. Since $u_n \rightarrow u_0$ in $L^{r+1}$ with $1 \leq r <\infty$, we are able to choose $\tau$ and $\sigma$ large enough so that \eqref{in ball} holds. As a consequence, we prove
\[
\||x|^{-b} (|u_n|^{p+1} - |u_0|^{p+1})\|_{L^1(B)} \quad \text{as } n \rightarrow \infty.
\]
On $B^c$, we bound
\begin{align*}
\||x|^{-b}(|u_n|^{p+1} - |u_0|^{p+1})\|_{L^1(B^c)} &\leq \||u_n|^{p+1} - |u_0|^{p+1}\|_{L^1} \\ &\lesssim \|u_n-u_0\|_{L^{p+1}} (\|u_n|^p_{L^{p+1}} + \|u_0\|^p_{L^{p+1}}) \\
&\lesssim \|u_n-u_0\|_{L^{p+1}} (\|u_n\|^p_{\Sigma} + \|u_0\|^p_{\Sigma}) \rightarrow 0.
\end{align*}
Combining two terms, we prove the claim.
	
	 Thus
	\[
	K_\omega(u_0) \leq \liminf_{n\rightarrow \infty} K_\omega(u_n)=0.
	\]
 	We also have
	\[
	\lim_{n\rightarrow \infty} S_\omega(u_n) = \lim_{n\rightarrow \infty} \frac{p-1}{2(p+1)} P(u_n) = \frac{p-1}{2(p+1)} P(u_0) = d_\omega.
	\]
	Suppose $K_\omega(u_0) <0$, thus $\|u_0\|^2_{H_\omega} <P(u_0)$. 
	We have
	\[
	K_\omega(\lambda u_0) = \lambda^2 \|u_0\|^2_{H_\omega} - \lambda^{p+1} P(u_0).
	\]
	This implies that $K_\omega(\lambda_0 u_0) =0$, where
	\[
	\lambda_0 = \left(\frac{\|u_0\|^2_{H_\omega}}{P(u_0)}\right)^{\frac{1}{p-1}} \in (0,1).
	\]
	By definition of $d_\omega$,
	\[
	d_\omega \leq S_\omega(\lambda_0 u_0) = \frac{p-1}{2(p+1)} P(\lambda_0 u_0) = \frac{p-1}{2(p+1)} \lambda_0^{p+1} P(u_0) < \frac{p-1}{2(p+1)} P(u_0) = d_\omega.
	\]
	This is a contradiction. Therefore, $K_\omega(u_0) =0$. This combined with the fact $S_\omega(u_0) = \frac{p-1}{2(p+1)} P(u_0) = d_\omega$ imply that $u_0$ is a minimizer of $d_\omega$. 
	
	It remains to show $u_0$ solves the elliptic equation \eqref{Sp}. Since $u_0$ is a minimizer of $d_\omega$, there exists a Lagrange multiplier $\mu \in \mathbb{R}$ such that $S'_\omega(u_0) = \mu K'_\omega(u_0)$. We have
	\begin{align}
	0= K_\omega(u_0) = \langle S'_\omega(u_0), u_0\rangle = \mu \langle K'_\omega(u_0), u_0\rangle. \label{lagr mu}
	\end{align}
	On the other hand,
	\[
	K'_\omega(u_0) = 2(-\Delta) u_0 +2\gamma |x|^2 u_0+2\omega u_0 - (p+1) |x|^{-b} |u_0|^{p-1} u_0.
	\]
	Thus,
	\[
	\langle K'_\omega(u_0), u_0\rangle = 2\|u_0\|^2_{H_\omega} - (p+1) P(u_0) = -(p-1) P(u_0) <0.
	\]
	This together with \eqref{lagr mu} imply that $\mu=0$. So, $S'_\omega(u_0)=0$ or $u_0$ is a solution of \eqref{Sp}. This proves the first part of the statement.
Now	let $u$ be a complex valued minimizer for $d_\omega$. We claim that there exists $\theta\in \mathbb{R}$ such that
	$u(x)=e^{i\theta}\varphi(x)$, where $\varphi$ is a positive real valued minimizer. Indeed, since $\|\nabla (|u|)\|^{2}_{L^{2}}\leq \|\nabla u\|^{2}_{L^{2}}$, it is clear that  $S_\omega(|u|)\leq S_\omega(u)$ and $K_\omega(|u|)\leq K_\omega(u)=0$. In particular, $|u|\in\mathcal{M}_{\omega}$ and 
	\begin{equation}\label{Gca}
	\|\nabla |u|\|^{2}_{L^{2}}= \|\nabla u\|^{2}_{L^{2}}.
	\end{equation}
	From the Euler-Lagrange equation \eqref{Sp} and an elliptic regularity regularity/bootstrap argument we see that $u\in C^{1}(\mathbb{R}^{N}, \mathbb{C})$ (see \cite[Sections 2.1 and 2.2]{FGCS} and \cite{ABRF}). Moreover, the positivity of $|u|$ follows from the maximum principle and thus $u\in C^{1}(\mathbb{R}^{N}, \mathbb{C}\setminus\left\{0\right\})$. 
	
	We set $w(x):=\frac{u(x)}{|u(x)|}$. Since $|w|^{2}=1$, it follows that $\text{Re}(\overline{w}\,\nabla w)=0$ and 
	\begin{equation*}
	\nabla u=(\nabla |u|)w+|u|\nabla w=w(\nabla |u|+|u|\overline{w}\nabla w).
	\end{equation*}
	Therefore, we see that $|\nabla u|^{2}=|\nabla |u||^{2}+|u|^{2}|\nabla w|^{2}$.  From \eqref{Gca} we get 
	\begin{equation*}
	\int_{\mathbb{R}^{N}}|u|^{2}|\nabla w|^{2}dx=0,
	\end{equation*}
	and thus $|\nabla w|=0$. Hence $w$ is constant with $|w|=1$, we infer that there exists $\theta\in \mathbb{R}$ such that $u=e^{i \theta}\varphi(x)$ where $\varphi(x):=|u(x)|$.  This prove the claim. We now prove that $\varphi$ is necessarily radial and radially decreasing. Indeed, denoting by $\varphi^{\ast}$ the Schwarz rearrangement  of $\varphi$, it is well known that (see \cite{HHa})
	\begin{align}\label{S1}
	\int_{\mathbb{R}^{N}}|x|^{2}|\varphi^{\ast}(x)|^{2}\,dx&<\int_{\mathbb{R}^{N}}|x|^{2}\varphi^{2}(x)\,dx \quad \text{unless $\varphi=\varphi^{\ast}$,}\\\label{S2}
	\int_{\mathbb{R}^{N}}|x|^{-b}|\varphi^{\ast}(x)|^{p+1}\,dx&>\int_{\mathbb{R}^{N}}|x|^{-b}\varphi^{p+1}(x)\,dx \quad \text{unless $\varphi=\varphi^{\ast}$.}
	\end{align}
	Thus, from $\|\nabla \varphi^{\ast}\|^{2}_{L^{2}}\leq \|\nabla \varphi\|^{2}_{L^{2}}$, we infer that if $\varphi$ is not radial, then 
	$S_\omega(\varphi^{\ast})<S_\omega(\varphi)=d_{\omega}$ and $K_\omega(\varphi^{\ast})<K_\omega(\varphi)=0$, a contradiction.
	This prove that $\varphi$ is  radial and radially decreasing. 
\end{proof}

\section{Sharp thresholds for blowup and global existence in the mass-critical and mass-supercritical cases}
\label{S:2/3}
This section is devoted to the proof of Theorem \ref{prop blowup}. We have divided the proof into a sequence of lemmas.

\begin{lemma} \label{lem poho}
	Let $\gamma>0, N \geq 1, 0<b<\min\{2,N\}, 1<p<2^{\circ}$ and $\omega>-\gamma N$. There exists $u \in \Sigma \backslash \{0\}$ such that $K_\omega(u) = I(u) =0$.
\end{lemma}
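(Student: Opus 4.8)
The plan is to produce the required $u$ explicitly as a ground state, rather than by an abstract minimization over the two constraints simultaneously. By Theorem \ref{prop GS}, the value $d_\omega$ is attained by some $\varphi \in \mathcal{M}_\omega \subset \Sigma \backslash \{0\}$ which solves the elliptic equation \eqref{Sp}, that is $S'_\omega(\varphi)=0$, and which by the very definition of $\mathcal{M}_\omega$ satisfies $K_\omega(\varphi)=0$. Thus it remains only to check that this same $\varphi$ also satisfies $I(\varphi)=0$; then $u=\varphi$ answers the lemma.

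The key observation is that $I$ is precisely the derivative of the energy along the $L^2$-preserving dilation. Writing $u^\lambda(x):=\lambda^{N/2}u(\lambda x)$ for $\lambda>0$, an elementary change of variables gives $\|\nabla u^\lambda\|_{L^2}^2=\lambda^2\|\nabla u\|_{L^2}^2$, $\|xu^\lambda\|_{L^2}^2=\lambda^{-2}\|xu\|_{L^2}^2$, $\|u^\lambda\|_{L^2}^2=\|u\|_{L^2}^2$ and $P(u^\lambda)=\lambda^{\frac{N(p-1)}{2}+b}P(u)$. Differentiating
\[
E(u^\lambda)=\frac{\lambda^2}{2}\|\nabla u\|_{L^2}^2+\frac{\gamma^2\lambda^{-2}}{2}\|xu\|_{L^2}^2-\frac{\lambda^{\frac{N(p-1)}{2}+b}}{p+1}P(u)
\]
at $\lambda=1$ yields $\frac{d}{d\lambda}E(u^\lambda)\big|_{\lambda=1}=I(u)$ for every $u\in\Sigma$.

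Since the dilation preserves the $L^2$-norm, we have $S_\omega(u^\lambda)=E(u^\lambda)+\frac{\omega}{2}\|u\|_{L^2}^2$ with the last term independent of $\lambda$; hence $\frac{d}{d\lambda}S_\omega(\varphi^\lambda)\big|_{\lambda=1}=I(\varphi)$. On the other hand, $\lambda\mapsto\varphi^\lambda$ is a differentiable curve in $\Sigma$ passing through $\varphi$ at $\lambda=1$ with tangent $\partial_\lambda\varphi^\lambda|_{\lambda=1}=\frac{N}{2}\varphi+x\cdot\nabla\varphi$, so that by the chain rule $\frac{d}{d\lambda}S_\omega(\varphi^\lambda)\big|_{\lambda=1}=\langle S'_\omega(\varphi),\,\frac{N}{2}\varphi+x\cdot\nabla\varphi\rangle=0$, because $S'_\omega(\varphi)=0$. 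Equating the two expressions gives $I(\varphi)=0$, and together with $K_\omega(\varphi)=0$ this proves the lemma with $u=\varphi$.

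I expect the only delicate point to be the justification of the chain-rule identity $\frac{d}{d\lambda}S_\omega(\varphi^\lambda)\big|_{\lambda=1}=\langle S'_\omega(\varphi),\partial_\lambda\varphi^\lambda|_{\lambda=1}\rangle$, which is exactly the Pohozaev/virial identity for solutions of \eqref{Sp}. It requires the tangent vector $\frac{N}{2}\varphi+x\cdot\nabla\varphi$ to be an admissible element of $\Sigma$ together with enough regularity on $\varphi$; both are guaranteed by the $C^1$ regularity and the Gaussian-type decay forced by the harmonic potential, already invoked in the proof of Theorem \ref{prop GS}. If instead one wishes to keep the lemma independent of the ground state, the same conclusion can be reached by a two-parameter dilation $v_{\lambda,\mu}(x)=\lambda v(\mu x)$ applied to any fixed $v\in\Sigma\backslash\{0\}$ and solving $K_\omega(v_{\lambda,\mu})=I(v_{\lambda,\mu})=0$ for $(\lambda,\mu)$ using the explicit scaling exponents above; this route is more computational, the main obstacle being to control the sign of the term $\omega\|v\|_{L^2}^2$ when $\omega$ is close to $-\gamma N$.
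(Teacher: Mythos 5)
Your proposal is correct, and it starts from the same key input as the paper: the nontrivial solution of \eqref{Sp} furnished by Theorem \ref{prop GS}. The difference is purely in how the two identities are produced. The paper multiplies \eqref{Sp} by $\overline{u}$ and integrates to get $\|u\|^2_{H_\omega}-P(u)=0$ (i.e. $K_\omega(u)=0$), then multiplies by $x\cdot\nabla \overline{u}$, integrates and takes real parts to get a Pohozaev identity, and finally adds $\tfrac{N}{2}$ times the first identity to the second to conclude $I(u)=0$. You instead read $K_\omega(\varphi)=0$ directly off the constraint defining $\mathcal{M}_\omega$, and obtain $I(\varphi)=0$ as stationarity of $S_\omega$ along the mass-preserving dilation $\varphi^\lambda(x)=\lambda^{N/2}\varphi(\lambda x)$, using $S'_\omega(\varphi)=0$ and the chain rule. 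These are the same computation in different clothing: your tangent vector $\tfrac{N}{2}\varphi+x\cdot\nabla\varphi$ is exactly the paper's combined multiplier ($\tfrac{N}{2}\overline{u}$ plus $x\cdot\nabla\overline{u}$), so differentiating along the dilation amounts to pairing the equation with that multiplier. What your version buys is conceptual clarity --- it exhibits $I$ as the derivative of the energy under $L^2$-preserving scaling, which is also the mechanism behind the virial identity used later in Theorem \ref{prop blowup} --- at the cost of having to justify that $\lambda\mapsto\varphi^\lambda$ is differentiable as a curve in $\Sigma$, equivalently that $\tfrac{N}{2}\varphi+x\cdot\nabla\varphi\in\Sigma$; you flag this correctly. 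Note, though, that the paper's route carries exactly the same hidden burden: the integration by parts against $x\cdot\nabla\overline{u}$ needs the same regularity and decay of $\varphi$ (in particular control of second derivatives near the singularity of $|x|^{-b}$ and Gaussian-type decay at infinity), and the paper is equally silent about it. So the two proofs are at the same level of rigor, with the delicate point merely relocated.
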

\begin{proof}
	By Proposition \ref{prop GS}, there exists a non-trivial solution $u$ to the elliptic equation \eqref{Sp}. Multiplying both sides of \eqref{Sp} with $\overline{u}$ and integrating over $\mathbb{R}^N$, we have
	\begin{align}
	\|\nabla u\|^2_{L^2} + \omega\|u\|^2_{L^2} + \gamma^2 \|xu\|^2_{L^2} - P(u)=0. \label{id 1}
	\end{align}
	On the other hand, multiplying both sides of \eqref{Sp} with $x \cdot \nabla \overline{u}$, integrating over $\mathbb{R}^N$ and taking the real part, we have
	\begin{align}
	\frac{2-N}{2}\|\nabla u\|^2_{L^2} -\frac{N\omega}{2} \|u\|^2_{L^2} -\frac{N+2}{2} \gamma^2 \|xu\|^2_{L^2} + \frac{N-b}{p+1} P(u)=0.\label{id 2}
	\end{align}
	By \eqref{id 1}, it is obvious that $K_\omega(u)=0$. Multiplying both sides of \eqref{id 1} with $\frac{d}{2}$ and adding to \eqref{id 2}, we get
	\[
	\|\nabla u\|^2_{L^2} - \gamma^2 \|xu\|^2_{L^2} - \frac{N(p-1)+2b}{2(p+1)} P(u)=0,
	\]
	which implies that $I(u)=0$.
\end{proof}

\begin{lemma}
	Let $\gamma>0, N\geq 1, 0<b<\min\{2,N\}, 1+\frac{4-2b}{N} \leq N <2^{\circ}$ and $\omega>-\gamma N$. Then the set $\mathcal{N}$ is not empty.
\end{lemma}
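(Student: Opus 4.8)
The plan is to exhibit an explicit element of $\mathcal{N}$ by means of a two-parameter scaling. Set $\kappa := \frac{N(p-1)+2b}{2(p+1)}$, so that $I(u) = \|\nabla u\|_{L^2}^2 - \gamma^2\|xu\|_{L^2}^2 - \kappa P(u)$. The decisive elementary fact is that $0<\kappa<1$: positivity is clear from $p>1$, $b>0$, while $\kappa<1$ is equivalent to $p(N-2)<N+2-2b$, which is exactly the condition $p<2^{\circ}$ in every dimension $N\geq 1$. The usefulness of the constraint $I(u)=0$ is that it eliminates the nonlinear term: on $\{I=0\}$ one has $P(u)=\kappa^{-1}(\|\nabla u\|_{L^2}^2-\gamma^2\|xu\|_{L^2}^2)$, and inserting this into $K_\omega$ yields the closed expression
\[
K_\omega(u) = \frac{\kappa-1}{\kappa}\|\nabla u\|_{L^2}^2 + \frac{\kappa+1}{\kappa}\gamma^2\|xu\|_{L^2}^2 + \omega\|u\|_{L^2}^2,
\]
in which the gradient term carries the negative coefficient $(\kappa-1)/\kappa<0$.

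Next I would fix any $w\in\Sigma\setminus\{0\}$ (for instance the solution of \eqref{Sp} furnished by Lemma \ref{lem poho}) and consider the family $w_{a,\lambda}(x):=a\lambda^{N/2}w(\lambda x)$ with $a,\lambda>0$. A direct computation gives $\|\nabla w_{a,\lambda}\|_{L^2}^2=a^2\lambda^2\|\nabla w\|_{L^2}^2$, $\|xw_{a,\lambda}\|_{L^2}^2=a^2\lambda^{-2}\|xw\|_{L^2}^2$, $\|w_{a,\lambda}\|_{L^2}^2=a^2\|w\|_{L^2}^2$ and $P(w_{a,\lambda})=a^{p+1}\lambda^{\frac{N(p-1)}{2}+b}P(w)$. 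For every $\lambda$ large enough that $\lambda^2\|\nabla w\|_{L^2}^2>\gamma^2\lambda^{-2}\|xw\|_{L^2}^2$, the equation $I(w_{a,\lambda})=0$ can be solved for the amplitude,
\[
a(\lambda) = \left( \frac{\lambda^2\|\nabla w\|_{L^2}^2 - \gamma^2\lambda^{-2}\|xw\|_{L^2}^2}{\kappa\,\lambda^{\frac{N(p-1)}{2}+b}\,P(w)} \right)^{\frac{1}{p-1}} \in (0,\infty),
\]
so that $w_{a(\lambda),\lambda}$ is nonzero and lies on the constraint surface $\{I=0\}$.

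Finally I would insert $u=w_{a(\lambda),\lambda}$ into the closed expression above; since the outcome is proportional to $a(\lambda)^2>0$, the sign of $K_\omega(w_{a(\lambda),\lambda})$ equals the sign of
\[
\frac{\kappa-1}{\kappa}\lambda^2\|\nabla w\|_{L^2}^2 + \frac{\kappa+1}{\kappa}\gamma^2\lambda^{-2}\|xw\|_{L^2}^2 + \omega\|w\|_{L^2}^2.
\]
Because $(\kappa-1)/\kappa<0$, the dominant contribution as $\lambda\to\infty$ is $\frac{\kappa-1}{\kappa}\lambda^2\|\nabla w\|_{L^2}^2\to-\infty$, while the middle term tends to $0$ and the last term is constant in $\lambda$. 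Hence $K_\omega(w_{a(\lambda),\lambda})<0$ for all $\lambda$ sufficiently large, which together with $I(w_{a(\lambda),\lambda})=0$ shows $w_{a(\lambda),\lambda}\in\mathcal{N}$ and proves $\mathcal{N}\neq\emptyset$.

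I expect the only real subtlety to be the recognition that one parameter does not suffice: amplitude scaling alone moves $K_\omega$ and $I$ in lockstep (on a solution of \eqref{Sp} both equal a positive multiple of $a^2-a^{p+1}$), so a single dilation cannot in general be pushed back onto $\{I=0\}$ while keeping $K_\omega<0$. Using the amplitude $a$ to enforce $I=0$ and the dilation $\lambda$ to drive the gradient term of $K_\omega$ to $-\infty$, and isolating the sign condition $0<\kappa<1$ (i.e.\ $p<2^{\circ}$), is precisely what makes the argument go through.
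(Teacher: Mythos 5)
Your proof is correct, and it takes a genuinely different route from the paper's. The paper starts from the solution $u$ of \eqref{Sp} provided by Lemma \ref{lem poho} (so $K_\omega(u)=I(u)=0$, which in turn rests on the variational existence result of Theorem \ref{prop GS}), amplitude-scales it to $\lambda u$ with $\lambda>1$ to make $K_\omega<0$ and $I<0$ (choosing $\lambda$ close to $1$ so that $\|\nabla u^\lambda\|_{L^2}^2-\frac{N(p-1)+2b}{2(p+1)}P(u^\lambda)$ stays positive), and then applies a second dilation $v_\mu(x)=\mu^{\frac{2-b}{p-1}}v(\mu x)$: since the coefficient of $\mu^{a}$ in $I(v_\mu)$ is positive, $I(v_\mu)\to+\infty$ as $\mu\to\infty$, so $I$ returns to zero at some $\mu_0>1$, while the exponent inequalities $a>0$, $a-2\leq 0$, $a-4<0$ keep $K_\omega(v_\mu)<0$; note that $a-2\leq 0$ is exactly where the hypothesis $p\geq 1+\frac{4-2b}{N}$ enters. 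You instead take an arbitrary seed $w\in\Sigma\setminus\{0\}$, solve $I=0$ exactly for the amplitude $a(\lambda)$ in the two-parameter family $a\lambda^{N/2}w(\lambda x)$, and read off the sign of $K_\omega$ from its closed form on $\{I=0\}$, where the gradient term carries the coefficient $(\kappa-1)/\kappa<0$ precisely because $p<2^{\circ}$. Your approach buys three things: it is self-contained (no elliptic solution, hence no dependence on Theorem \ref{prop GS}); it covers the full range $1<p<2^{\circ}$ rather than only $p\geq 1+\frac{4-2b}{N}$; and it avoids the sign bookkeeping of the paper's second scaling (in particular the deduction that $K_\omega(v)<0$ implies $\|\nabla v\|^2_{L^2}-P(v)<0$, which requires some care when $-\gamma N<\omega<0$). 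What the paper's route buys is economy within its own architecture: Lemma \ref{lem poho} is needed anyway for Theorem \ref{INSF}, and once it is available the verification is short. Your closing observation --- that on a solution of \eqref{Sp} pure amplitude scaling moves $K_\omega$ and $I$ in lockstep, both being positive multiples of $a^2-a^{p+1}$ --- is accurate and explains why the paper, too, is forced to introduce a second scaling parameter.
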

\begin{proof}
	By Lemma \ref{lem poho}, there exists $u \in \Sigma \backslash \{0\}$ such that $K_\omega=I(u)=0$. Set $u^\lambda(x) =\lambda u(x)$. We have
	\begin{align*}
	K_\omega(u^\lambda) &= \lambda^2 \|u\|^2_{H_\omega} - \lambda^{p+1} P(u), \\
	I(u^\lambda)&= \lambda^2 (\|\nabla u\|^2_{L^2} - \gamma^2\|xu\|^2_{L^2}) - \frac{N(p-1)+2b}{2(p+1)}\lambda^{p+1} P(u).
	\end{align*}
	Since $K_\omega(u)=I(u)=0$, the equations $K_\omega(u^\lambda)=0$ and $I(u^\lambda)=0$ admit unique non-zero solution $\lambda=1$. Therefore, $K_\omega(u^\lambda)<0, I(u^\lambda)<0$ for all $\lambda>1$. Consider 
	\[
	A(\lambda):= \|\nabla u^\lambda\|^2_{L^2} - \frac{N(p-1)+2b}{2(p+1)} P(u^\lambda).
	\]
	Since $I(u)=0$, we have $A(1)>0$. By continuity, there exists $\lambda_0>1$ such that $A(\lambda_0)>0$. We denote $v(x) = u^{\lambda_0}(x)$. Set
	\[
	v_\mu(x) := \mu^{\frac{2-b}{p-1}} v(\mu x), \quad \mu>0.
	\]
	A calculation shows that
	\begin{align*}
	K_\omega(v_\mu) &= \mu^a (\|\nabla v\|^2_{L^2} - P(v)) + \mu^{a-4}\gamma^2 \|xu\|^2_{L^2} + \mu^{a-2} \omega \|u\|^2_{L^2}, \\
	I(v_\mu) &= \mu^a \left(\|\nabla v\|^2_{L^2} - \frac{N(p-1)+2b}{2(p+1)} P(v)\right) - \mu^{a-4} \gamma^2 \|xu\|^2_{L^2},
	\end{align*}
	where
	\[
	a= \frac{4-2b-(N-2)(p-1)}{p-1} >0.
	\]
	Since $I(v)<0$ and $\lim_{\mu \rightarrow +\infty} I(v_\mu) = +\infty$, there exists $\mu_0>1$ such that $I(v_{\mu_0})=0$. On the other hand, $K_\omega(v)<0$ implies that $\|\nabla v\|^2_{L^2} - P(v) <0$. Moreover, since $a>0, a-2\leq 0$ and $a-4<0$, we see that $K_\omega(v_\mu) <0$ for all $\mu>1$. We obtain $v_{\mu_0} \in \Sigma \backslash \{0\}$, $K_\omega(v_{\mu_0})<0$ and $I(v_{\mu_0})=0$ or $v_{\mu_0} \in \mathcal{N}$. 
\end{proof}

\begin{lemma} \label{lem dn}
	Let $\gamma>0, N\geq 1, 0<b<\min\{2,N\}, 1+ \frac{4-2b}{N} \leq N <2^{\circ}$ and $\omega>-\gamma N$. Then $d_n>0$.
\end{lemma}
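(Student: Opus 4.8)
The plan is to prove that there is a constant $c_0>0$ with $S_\omega(u)\ge c_0$ for every $u\in\mathcal N$, which immediately gives $d_n\ge c_0>0$. The argument rests on two independent features of the constraint set $\mathcal N$: the condition $K_\omega(u)<0$ prevents $u$ from being small in $\Sigma$, while the condition $I(u)=0$ lets one rewrite $S_\omega$ as a nearly coercive combination of the kinetic, potential and mass energies.

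First I would record a non-degeneracy estimate valid on the larger region $\{u\in\Sigma\setminus\{0\}:K_\omega(u)<0\}\supset\mathcal N$. If $K_\omega(u)<0$ then $\|u\|_{H_\omega}^2<P(u)$, and combining the Gagliardo--Nirenberg inequality with Young's inequality exactly as in the proof of Theorem \ref{prop GS} gives $P(u)\le C\|u\|_{H_\omega}^{p+1}$. Hence $\|u\|_{H_\omega}^2<C\|u\|_{H_\omega}^{p+1}$, so $\|u\|_{H_\omega}^{p-1}>1/C$, and therefore $\|u\|_{H_\omega}^2\ge\delta$ and $P(u)>\|u\|_{H_\omega}^2\ge\delta$ for some fixed $\delta>0$. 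Using \eqref{Igst} to compare $\|u\|_{H_\omega}^2$ with $\|\nabla u\|_{L^2}^2+\gamma^2\|xu\|_{L^2}^2$, one also obtains $\|\nabla u\|_{L^2}^2+\gamma^2\|xu\|_{L^2}^2\ge\delta'>0$ throughout $\mathcal N$.

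Next I would exploit $I(u)=0$. Setting $m:=\tfrac{N(p-1)+2b}{2}$ (so that $m\ge 2$ in the range $p\ge 1+\tfrac{4-2b}{N}$, with equality precisely at the mass-critical exponent) and using $I(u)=0$ to substitute $\tfrac{1}{p+1}P(u)=\tfrac1m\big(\|\nabla u\|_{L^2}^2-\gamma^2\|xu\|_{L^2}^2\big)$ into the definition of $S_\omega$, one gets the key identity
\[
S_\omega(u)=\Big(\tfrac12-\tfrac1m\Big)\|\nabla u\|_{L^2}^2+\Big(\tfrac12+\tfrac1m\Big)\gamma^2\|xu\|_{L^2}^2+\tfrac{\omega}{2}\|u\|_{L^2}^2 ,\qquad u\in\mathcal N .
\]
When $m>2$ and $\omega\ge0$ all three coefficients are nonnegative, so $S_\omega(u)\ge\min\{\tfrac12-\tfrac1m,\tfrac12+\tfrac1m\}\big(\|\nabla u\|_{L^2}^2+\gamma^2\|xu\|_{L^2}^2\big)\ge\min\{\tfrac12-\tfrac1m,\tfrac12+\tfrac1m\}\,\delta'>0$, which settles that case.

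The main obstacle is the remaining regime, where the term $\tfrac{\omega}{2}\|u\|_{L^2}^2$ is indefinite (for $-\gamma N<\omega<0$) and, simultaneously, the coefficient $\tfrac12-\tfrac1m$ of $\|\nabla u\|_{L^2}^2$ degenerates to $0$ at the mass-critical value $m=2$. To absorb the indefinite mass term I would invoke the uncertainty inequality \eqref{Igst}, or more sharply \eqref{HI} in the form $\|u\|_{L^2}^2\le\tfrac{2}{N}\|\nabla u\|_{L^2}\|xu\|_{L^2}$, reducing the lower bound for $S_\omega$ to a quadratic form in $\big(\|\nabla u\|_{L^2},\gamma\|xu\|_{L^2}\big)$ whose positive-definiteness must be verified using $|\omega|<\gamma N$. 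For the borderline $m=2$, where the gradient term drops out entirely, I expect to need the sharp Gagliardo--Nirenberg constant \eqref{Lk} (which is exactly the ingredient encoding the critical mass $\|Q\|_{L^2}$) together with the constraint $K_\omega(u)<0$ in order to recover a quantitative lower bound. This balancing of the potential energy against the degenerate kinetic energy is the delicate point, and it is precisely here that the value of $\omega$ and the criticality of $p$ genuinely enter the estimate.
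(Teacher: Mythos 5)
Your proposal is not a complete proof, and the part you leave open cannot be closed. What you do establish is correct: the non\-/degeneracy bound $\|u\|_{H_\omega}^2\ge \delta>0$ on $\{K_\omega<0\}$ via Gagliardo--Nirenberg and Young, the identity $S_\omega(u)=\bigl(\tfrac12-\tfrac1m\bigr)\|\nabla u\|_{L^2}^2+\bigl(\tfrac12+\tfrac1m\bigr)\gamma^2\|xu\|_{L^2}^2+\tfrac{\omega}{2}\|u\|_{L^2}^2$ on $\{I=0\}$ (this is exactly \eqref{S_ome u}), and the conclusion $d_n>0$ in the supercritical case when $\omega\ge 0$; up to making the restriction $\omega\ge0$ explicit, this is the paper's Case 1. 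But for the regimes you yourself flag as delicate, namely $-\gamma N<\omega<0$ and the mass-critical exponent $m=2$, you only describe what you would try (``positive-definiteness must be verified'', ``I expect to need the sharp constant''), so as it stands the argument proves the lemma only for $\omega\ge 0$, while the statement asserts it for all $\omega>-\gamma N$.

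Moreover, the strategy you sketch for the missing regime must fail, because the assertion $d_n>0$ is itself false there. Your quadratic form $\bigl(\tfrac12-\tfrac1m\bigr)x^2+\bigl(\tfrac12+\tfrac1m\bigr)y^2-\tfrac{|\omega|}{\gamma N}\,xy$ is positive definite if and only if $|\omega|<\gamma N\sqrt{1-4/m^2}$, a strictly stronger condition than $|\omega|<\gamma N$, and one that becomes vacuous at $m=2$; and this loss is real, not an artifact, since equality in \eqref{HI} is attained by Gaussians and Gaussians lie in $\mathcal{N}$. Concretely, at the critical exponent fix $\omega\in(-\gamma N,0)$ and take $u=\kappa e^{-\gamma'|x|^2/2}$ with $\gamma'>\gamma$, choosing $\kappa>0$ so that $I(u)=0$ (possible because $\|\nabla u\|_{L^2}^2-\gamma^2\|xu\|_{L^2}^2=\tfrac{N(\gamma'^2-\gamma^2)}{2\gamma'}\|u\|_{L^2}^2>0$ scales like $\kappa^2$ while $P(u)$ scales like $\kappa^{p+1}$). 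Using $\|\nabla u\|_{L^2}^2=\tfrac{N\gamma'}{2}\|u\|_{L^2}^2$ and $\gamma^2\|xu\|_{L^2}^2=\tfrac{N\gamma^2}{2\gamma'}\|u\|_{L^2}^2$, the constraint gives $P(u)=\tfrac{p+1}{2}\bigl(\|\nabla u\|_{L^2}^2-\gamma^2\|xu\|_{L^2}^2\bigr)$, hence $K_\omega(u)=\bigl(\tfrac{1-p}{2}\cdot\tfrac{N\gamma'}{2}+\tfrac{3+p}{2}\cdot\tfrac{N\gamma^2}{2\gamma'}+\omega\bigr)\|u\|_{L^2}^2<0$ for $\gamma'$ large, while \eqref{S_ome u} yields $S_\omega(u)=\bigl(\tfrac{N\gamma^2}{2\gamma'}+\tfrac{\omega}{2}\bigr)\|u\|_{L^2}^2<0$ as soon as $\gamma'>N\gamma^2/|\omega|$. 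Thus $u\in\mathcal{N}$ and $d_n<0$ for every $\omega\in(-\gamma N,0)$ in the critical case; the same construction with the optimal $\gamma'=\gamma\sqrt{(m+2)/(m-2)}$ gives $d_n<0$ in the supercritical case whenever $-\gamma N<\omega<-\gamma N\sqrt{1-4/m^2}$. You should also know that the paper's own proof hides the same issue: its Case 1 step $S_\omega(u)\ge C_3\|u\|_{H_\omega}^2$ and its Case 2 deduction that $S_\omega(u_n)\to0$ forces $\|u_n\|_{L^2}^2\to 0$ and $\|xu_n\|_{L^2}^2\to0$ are both valid only when $\omega\ge 0$. So your diagnosis of where the difficulty sits is exactly right, but the remedy is to add the hypothesis $\omega\ge0$ (with $\omega>0$ at the critical exponent), not to look for a sharper inequality.
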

\begin{proof}
	Let $u \in \Sigma \backslash \{0\}$ be such that $K_\omega(u)<0$ and $I(u)=0$. Since $I(u)=0$, we have
	\[
	\|\nabla u\|^2_{L^2} - \gamma^2 \|xu\|^2_{L^2} = \frac{N(p-1)+2b}{2(p+1)} P(u).
	\]
	Thus,
	\begin{multline}
	S_\omega(u) = \left(\frac{1}{2} - \frac{2}{N(p-1)+2b}\right) \|\nabla u\|^2_{L^2}
	+ \left(\frac{1}{2}	+ \frac{2}{N(p-1)+2b}\right) \gamma^2 \|xu\|^2_{L^2}+\frac{\omega}{2} \|u\|^2_{L^2}. \label{S_ome u}
	\end{multline}
	We now consider two cases: $L^2$-supercritical case and $L^2$-critical case. 
	
	Case 1: $L^2$-supercritical case $1 + \frac{4-2b}{N} < p <2^{\circ}$. By the Gagliardo-Nirenberg inequality, we have
	\begin{align*}
	P(u) & \lesssim \|\nabla u\|^{\frac{N(p-1)}{2}+b}_{L^2} \|u\|^{p+1-\frac{N(p-1)}{2}-b}_{L^2} \\
	&\leq C_1 (\|\nabla u\|^2_{L^2} + \|u\|^2_{L^2})^{\frac{p+1}{2}}\\
	&\leq C_2 \|u\|^{p+1}_{H_\omega}.
	\end{align*}
	Since $K_\omega(u)<0$, it follows that $\|u\|^2_{H_\omega} < P(u)$. Thus, $\|u\|^2_{H_\omega}< P(u)  \leq C_2 \|u\|^{p+1}_{H_\omega}$. We get
	\[
	\|u\|^2_{H_\omega}>\left(\frac{1}{C_2}\right)^{\frac{2}{p-1}}>0. 
	\]
	On the other hand, by \eqref{S_ome u} and the fact $\frac{1}{2}-\frac{2}{N(p-1)+2b}>0$ in this case, we have
	\[
	S_\omega(u) \geq C_3 \|u\|^2_{H_\omega} > C_3 \left(\frac{1}{C_2}\right)^{\frac{2}{p-1}}>0.
	\]
	Taking the infimum, we obtain $d_n>0$.
	
	Case 2: $L^2$-critical case $p=1+\frac{4-2b}{N}$. Assume $d_n=0$, there exists $(u_n)_{n\geq 1} \subset \Sigma \backslash \{0\}$, $K_\omega(u_n) <0$ and $I(u_n)=0$ for all $n\geq 1$ and $S_\omega(u_n) \rightarrow 0$ as $n\rightarrow \infty$. It follows from \eqref{S_ome u} that 
	\begin{align}
	\|u_n\|^2_{L^2} \rightarrow 0, \quad \|xu_n\|^2_{L^2} \rightarrow 0 \quad \text{as } n \rightarrow \infty. \label{limit}
	\end{align}
	Since $K_\omega(u_n) <0$, the sharp Gagliardo-Nirenberg inequality implies that
	\begin{align}
	\|u_n\|^2_{H_\omega} < P(u_n) \leq C \|\nabla u_n\|^2_{L^2} \|u_n\|^{\frac{4-2b}{N}}_{L^2}. \label{ineq 1}
	\end{align}
	For the constant $C$ in \eqref{ineq 1}, we have from \eqref{limit} that for $n$ sufficiently large,
	\[
	\|\nabla u_n\|^2_{L^2} > C \|\nabla u_n\|^2_{L^2} \|u_n\|^{\frac{4-2b}{N}}_{L^2}.
	\]
	It follows that 
	\begin{align}
	\|u_n\|^2_{H_\omega} > C \|\nabla u_n\|^2_{L^2} \|u_n\|^{\frac{4-2b}{N}}_{L^2}. \label{ineq 2}
	\end{align}
	The inequalities \eqref{ineq 1} and \eqref{ineq 2} contradict each other. Therefore, $d_n>0$.
\end{proof}

\begin{lemma}\label{Povs}
	Let $\gamma>0, N\geq 1, 0<b<\min\{2,N\}, 1+\frac{4-2b}{N} \leq p <2^{\circ}$ and $\omega>-\gamma N$. Then $d>0$.
\end{lemma}
\begin{proof}
	It comes from Theorem $\ref{prop GS}$ and Lemma $\ref{lem dn}$.
\end{proof}

\begin{lemma} \label{lem inva}
	Let $\gamma>0, N\geq 1, 0<b<\min\{2,N\}, 1+\frac{4-2b}{N} \leq p <2^{\circ}$ and $\omega>-\gamma N$. Then the sets $K_\pm, R_\pm$ are invariant under the flow of \eqref{GP}.
\end{lemma}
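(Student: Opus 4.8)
The plan is to combine the conservation of $S_\omega$ along the flow with a continuity (first-crossing) argument, using the characterizations of $d_\omega$ and $d_n$ as constrained infima. First I would observe that $S_\omega$ is a conserved quantity: since $S_\omega(u) = E(u) + \frac{\omega}{2}\|u\|^2_{L^2}$ and both the energy $E$ and the mass $\|u\|^2_{L^2}$ are preserved along the flow by Proposition \ref{WP}, every solution satisfies $S_\omega(u(t)) = S_\omega(u_0)$ on its maximal existence interval. In particular, if $S_\omega(u_0) < d$ then $S_\omega(u(t)) < d$ for all $t$, so the first defining condition of each of the sets $K_\pm, R_\pm$ is automatically preserved. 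Moreover, since $\|u(t)\|_{L^2} = \|u_0\|_{L^2} \neq 0$, the solution never vanishes, so $u(t) \in \Sigma \backslash \{0\}$ throughout.

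Next I would handle $R_\pm$. Let $u_0 \in R_+$, so $S_\omega(u_0) < d$ and $K_\omega(u_0) > 0$; I claim $K_\omega(u(t)) > 0$ for all $t$. The map $t \mapsto K_\omega(u(t))$ is continuous because $u \in C([0,T), \Sigma)$ and $K_\omega$ is continuous on $\Sigma$. If the claim failed, there would be a first time $t_0$ with $K_\omega(u(t_0)) = 0$; since $u(t_0) \in \Sigma \backslash \{0\}$, the definition \eqref{d_ome} forces $S_\omega(u(t_0)) \geq d_\omega \geq d$, contradicting $S_\omega(u(t_0)) = S_\omega(u_0) < d$. Hence $u(t) \in R_+$ for all $t$. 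The same argument, with the sign reversed, shows that $K_\omega(u(t)) < 0$ is preserved, giving the invariance of $R_-$.

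Finally, for $K_\pm$ I would preserve $K_\omega < 0$ exactly as for $R_-$, and then control the sign of $I$ by the same first-crossing scheme, now invoking $d_n$. If $u_0 \in K_-$, then $S_\omega(u(t)) < d$ and $K_\omega(u(t)) < 0$ for all $t$; were $I(u(t))$ to change sign there would be a first time $t_0$ with $I(u(t_0)) = 0$, and since $K_\omega(u(t_0)) < 0$ and $u(t_0) \neq 0$ we would have $u(t_0) \in \mathcal{N}$, whence $S_\omega(u(t_0)) \geq d_n \geq d$, again a contradiction. Thus $I(u(t)) < 0$ persists and $u(t) \in K_-$; the case $u_0 \in K_+$ is identical with $I > 0$. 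The main obstacle is arranging the argument so that at each first-crossing time the solution lands precisely on the constraint manifold of the relevant variational problem ($\{K_\omega = 0\}$ for $R_\pm$, and $\mathcal{N}$ for $K_\pm$), which is what lets the bounds $d_\omega, d_n \geq d$ coming from $d = \min\{d_\omega, d_n\}$ produce the contradiction; this relies crucially on mass conservation to keep $u(t_0) \neq 0$ so that the infima are genuinely applicable.
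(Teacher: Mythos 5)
Your proposal is correct and follows essentially the same argument as the paper: conservation of mass and energy gives $S_\omega(u(t))=S_\omega(u_0)<d$, and a continuity (first-crossing) argument shows that $K_\omega$ cannot vanish (else $S_\omega \geq d_\omega \geq d$ by the definition of $d_\omega$) and, once $K_\omega<0$ is preserved, that $I$ cannot vanish (else the solution lies in $\mathcal{N}$ and $S_\omega \geq d_n \geq d$). The only difference is cosmetic: the paper writes out the case $K_-$ and declares the others similar, while you treat all four sets explicitly.
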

\begin{proof}
	We only give the proof for $K_-$, the ones for $K_+, R_\pm$ are similar. Let $u_0 \in K_-$, i.e. $S_\omega(u_0)<d$, $K_\omega(u_0)<0, I(u_0)<0$. By conservation of mass and energy,
	\begin{align}
	S_\omega(u(t)) = S_\omega(u_0) <d, \quad \forall t \in [0,T). \label{global}
	\end{align}
	We now prove $K_\omega(u(t))<0$ for all $t\in [0,T)$. Suppose there exists $t_0>0$ such that $K_\omega(u(t_0)) \geq 0$. By the continuity of $t \mapsto K_\omega(u(t))$, there exists $t_1 \in (0,t_0]$ such that $K_\omega(u(t_1)) =0$. By the definition of $d_\omega$, $S_\omega(u(t_1)) \geq d_\omega \geq d$ which contradicts to \eqref{global}. 
	
	We finally prove that $I(u(t))<0$ for all $t\in [0,T)$. Suppose it is not true, there exists $t_2 \in [0,T)$ such that $I(u(t_2)) \geq 0$. By the continuity of $t\mapsto I(u(t))$, there exists $t_3 \in (0,t_2]$ such that $I(u(t_3)) =0$. We have $K_\omega(u(t_3))<0, I(u(t_3))=0$, by the definition of $d_n$, we have $S_\omega(u(t_3)) \geq d_n \geq d$ which contradicts to \eqref{global}.
\end{proof}

\begin{proof}[ \bf {Proof of Theorem \ref{prop blowup}}]
	By the virial identity,
	\[
	\frac{d^2}{dt^2} \|xu(t)\|^2_{L^2} = 8 I(u(t)), \quad \forall t\in [0,T). 
	\]
	By the convexity argument, it suffices to show that there exists $\delta>0$ such that $I(u(t)) <-\delta$ for all $t\in [0,T)$. Since $K_-$ is invariant under the flow of \eqref{GP}, we have $K_\omega(u(t))<0$ and $I(u(t))<0$ for all $t\in [0,T)$. Fixed $t\in [0,T)$ and denote $u=u(t)$. For $\mu>0$, we set $u_\mu(x) = \mu^{\frac{N-b}{p+1}} u(\mu x)$. We have
	\begin{align*}
	K_\omega(u_\mu) &= \mu^{\frac{2(N-b)-(N-2)(p+1)}{p+1}} \|\nabla u\|^2_{L^2} + \mu^{\frac{2(N-b)-(N+2)(p+1)}{p+1}} \gamma^2 \|xu\|^2_{L^2} \\
	&\mathrel{\phantom{=}} + \mu^{\frac{2(N-b)-N(p+1)}{p+1}} \omega \|u\|^2_{L^2} - P(u),
	\end{align*}
	and
	\begin{align*}
	I(u_\mu)= \mu^{\frac{2(N-b)-(N-2)(p+1)}{p+1}} \|\nabla u\|^2_{L^2} &- \mu^{\frac{2(N-b)-(N+2)(p+1)}{p+1}} \gamma^2 \|xu\|^2_{L^2} \\
	&- \frac{N(p-1)+2b}{2(p+1)} P(u).
	\end{align*}
	Since $1+\frac{4-2b}{N} \leq p<2^{\circ}$, we see that the exponents of $\mu$ in $I(u_\mu)$ are positive and negative respectively. Since $I(u)<0$, it yields that there exists $\mu_0>1$ such that $I(u_{\mu_0}) =0$, and when $\mu \in [1,\mu_0), I(u_\mu)<0$. For $\mu \in [1, \mu_0]$, since $K_\omega(u)<0$, $K_\omega(u_\mu)$ has the following two possibilities:
	\begin{itemize}
		\item[a)] $K_\omega(u_\mu)<0$ for $\mu \in [1,\mu_0]$,
		\item[b)] there exists $1<\mu_1\leq \mu_0$ such that $K_\omega(u_{\mu_1})=0$. 
	\end{itemize}
	For the case a), we have $I(u_{\mu_0})=0$ and $K_\omega(u_{\mu_0})<0$. By the definition of $d_n$, we have $S_\omega(u_{\mu_0}) \geq d_n \geq d$. Moreover, we have 
	\begin{align*}
	S_\omega(u) - S_\omega(u_{\mu_0}) &= \frac{1}{2}\left(1-\mu_0^{\frac{2(N-b)-(N-2)(p+1)}{p+1}} \right) \|\nabla u\|^2_{L^2} \\
	&\mathrel{\phantom{=}} + \frac{1}{2} \left(1-\mu_0^{\frac{2(N-b)-(N+2)(p+1)}{p+1}} \right) \gamma^2 \|xu\|^2_{L^2} \\
	&\mathrel{\phantom{=}} + \frac{1}{2} \left(1-\mu_0^{\frac{2(N-b)-N(p+1)}{p+1}} \right) \omega \|u\|^2_{L^2},
	\end{align*}
	and
	\begin{align*}
	I(u)-I(u_{\mu_0}) &= \left(1- \mu_0^{\frac{2(N-b)-(N-2)(p+1)}{p+1}} \right) \|\nabla u\|^2_{L^2} \\
	&\mathrel{\phantom{=}} - \left( 1- \mu_0^{\frac{2(N-b)-(N+2)(p+1)}{p+1}} \right) \gamma^2 \|xu\|^2_{L^2}.
	\end{align*}
	Since $\mu_0>1$ and $1+ \frac{4-2b}{N} \leq p <2^{\circ}$, it follows that 
	\[
	S_\omega(u)-S_\omega(u_{\mu_0}) \geq \frac{1}{2} (I(u)-I(u_{\mu_0})) = \frac{1}{2} I(u).
	\]
	For the case b), we have $K_\omega(u_{\mu_1}) =0$ and $I(u_{\mu_1}) \leq 0$. By the definition of $d_\omega$, we have $S_\omega(u_{\mu_1}) \geq d_\omega \geq d$. By the same argument as above, we have
	\[
	S_\omega(u)-S_\omega(u_{\mu_1}) \geq \frac{1}{2} (I(u)- I(u_{\mu_1})) \geq \frac{1}{2} I(u).
	\]
	In both cases, we prove that
	\[
	I(u) < 2 (S_\omega(u) - d). 
	\]
	Since the above argument is independent of $t\in [0,T)$, we get $I(u(t)) < - \delta$ for all $t\in [0,T)$, where $\delta = 2 (d-S_\omega(u_0)) >0$. Thus we obtain the proof of statement i) of theorem.
	
Next we prove ii). In the case $1 <p<1+\frac{4-2b}{N}$, the global existence follows from the sharp Gagliardo-Nirenberg inequality. Therefore, we only consider the case $1 + \frac{4-2b}{N} \leq p <2^{\circ}$. 
	
	1) Let us consider the case $u_0 \in R_+$. Since $R_+$ is invariant under the flow of \eqref{GP}, we have $S_\omega(u(t))<d$ and $K_\omega(u(t))>0$ for any $t\in [0,T)$. Since $K_\omega(u(t))>0$, it follows that $\|u(t)\|^2_{H_\omega} > P(u(t))$. Thus,
	\[
	\left(\frac{1}{2}-\frac{1}{p+1}\right) \|u(t)\|^2_{H_\omega} < \frac{1}{2} \|u(t)\|^2_{H_\omega} - \frac{1}{p+1} P(u(t)) = S_\omega(u(t)) <d.
	\]
	We get $\|u(t)\|^2_{H_\omega} < \frac{2d(p+1)}{p-1}$ for any $t\in [0,T)$. Since $\|u\|^2_{H_\omega} \sim \|u\|^2_{\Sigma}$, this implies that the solution exists globally in time.
	
	2) Let us now consider the case $u_0 \in K_+$. Since $K_+$ is invariant under the flow of \eqref{GP}, we have $S_\omega(u(t))<d, K_\omega(u(t))<0$ and $I(u(t))>0$. It follows that
	\begin{multline*}
	\left(\frac{1}{2}-\frac{2}{N(p-1)+2b}\right)\|\nabla u(t)\|^2_{L^2} + \left( \frac{1}{2} + \frac{2}{N(p-1)+2b} \right) \gamma^2 \|xu(t)\|^2_{L^2} + \frac{\omega}{2} \|u(t)\|^2_{L^2} \\
	< \frac{1}{2} \|u(t)\|^2_{H_\omega} - \frac{1}{p+1} P(u(t)) = S_\omega(u(t)) <d.
	\end{multline*}
	
	In the case $L^2$-supercritical case $1+\frac{4-2b}{N} < p < 2^{\circ}$, it follows from the above inequality that $\|\nabla u(t)\|^2_{L^2} < C$ for some constant $C>0$ and for any $t \in [0,T)$. This shows that the solution exists globally in time.
	
	In the $L^2$-critical case $p=1+\frac{4-2b}{N}$, we have
	\begin{align}
	\gamma^2 \|xu(t)\|^2_{L^2} + \frac{\omega}{2} \|u(t)\|^2_{L^2} <d. \label{L2 glo}
	\end{align}
	Fixed $t\in [0,T)$ and denote $u=u(t)$. We set $u_\mu(x) = \mu^{\frac{N(N-b)}{2N+4-2b}} u(\mu x)$. A direct computation shows that
	\[
	I(u_\mu) = \mu^{\frac{4-2b}{N+2-b}} \|\nabla u\|^2_{L^2} - \mu^{-\frac{4N+4-2b}{N+2-b}} \gamma^2 \|xu\|^2_{L^2} - \frac{N(p-1)+2b}{2(p+1)} P(u).
	\]
	Thus, $I(u)>0$ implies that there exists $0<\mu_0 <1$ such that $I(u_{\mu_0}) =0$. It follows that
	\begin{align*}
	S_\omega(u_{\mu_0}) &= \gamma^2 \|xu_{\mu_0}\|^2_{L^2} +\frac{1}{2} \omega \|u_{\mu_0}\|^2_{L^2} \\
	&= \mu_0^{-\frac{4N+4 -2b}{N+2-b}} \gamma^2 \|xu\|^2_{L^2} + \mu_0^{-\frac{2N}{N+2-b}} \omega \|u\|^2_{L^2}.
	\end{align*}
	It follows from \eqref{L2 glo} that 
	\begin{align}
	S_\omega(u_{\mu_0}) < \mu_0^{-\frac{4N+4-2b}{N+2-b}} d. \label{S_ome glo}
	\end{align}
	We now consider $K_\omega(u_{\mu_0})$ which has two possibilities. The first one is $K_\omega(u_{\mu_0})<0$. By the definition of $d_n$ and the fact $I(u_{\mu_0})=0$, we have
	\[
	S_\omega(u_{\mu_0}) \geq d_n \geq d > S_\omega(u).
	\]
	It follows that
	\[
	S_\omega(u) - S_\omega(u_{\mu_0})<0, 
	\]
	which is
	\begin{align*}
	\left( 1- \mu_0^{\frac{4-2b}{N+2-b}} \right) \|\nabla u\|^2_{L^2}  + \left( 1- \mu_0^{-\frac{4N+4-2b}{N+2-b}}\right) \gamma^2 \|xu\|^2_{L^2} 
	+\left(1-\mu_0^{-\frac{2N}{N+2-b}}\right) \omega \|u\|^2_{L^2} <0.
	\end{align*}
	It implies that
	\[
	\|\nabla u\|^2_{L^2}  \lesssim \gamma^2 \|xu\|^2_{L^2} + \omega \|u\|^2_{L^2}.
	\]
	Thanks to \eqref{L2 glo}, we get $\|\nabla u\|^2_{L^2} < C$ for some constant $C>0$.
	
	The second possibility is that $K_\omega(u_{\mu_0}) \geq 0$. In this case, using \eqref{S_ome glo}, we have
	\[
	S_\omega(u_{\mu_0}) - \frac{1}{p+1} K_\omega(u_{\mu_0}) < \mu_0^{-\frac{4N+4-2b}{N+2-b}} d.
	\]
	It follows that 
	\begin{align*}
	\frac{p-1}{2(p+1)} \left( \mu_0^{\frac{4-2b}{N+2-b}} \|\nabla u\|^2_{L^2} + \mu_0^{-\frac{4N+4-2b}{N+2-b}} \gamma^2 \|xu\|^2_{L^2} + \mu_0^{-\frac{2N}{N+2-b}} \omega \|u\|^2_{L^2} \right) 
	< \mu_0^{-\frac{4N+4-2b}{N+2-b}} d. 
	\end{align*}
	We thus get $\|\nabla u\|^2_{L^2} < C$ for some constant $C>0$. In both possibilities, we always have the boundedness of $\|\nabla u\|^2_{L^2}$. Since the above argument is independent of $t\in [0,T)$, we obtain the boundedness of $\|\nabla u(t)\|^2_{L^2}$ for any $t\in [0,T)$. Therefore, the solution exists globally in time in the $L^2$-critical case $p=1+\frac{4-2b}{N}$. This completes the proof of theorem.
\end{proof}

\section{Normalized ground states}
\label{S:3}
This section is devoted to the proof of Theorems \ref{CMm} and \ref{CC} stated in the introduction. Before giving the proof of Theorem \ref{CMm} we recall that  the embedding $\Sigma (\mathbb{R}^{N})\hookrightarrow L^{r+1}(\mathbb{R}^{N})$ is compact, where $1\leq r<1+4/(N-2)$ ($N\geq 3$), $1\leq  r<\infty$ ($N=1$, $2$.); see  \cite[Lemma 3.1]{JZ2000}.

Now we give the proof of Theorem \ref{CMm}. 
\begin{proof}[ \bf {Proof of Theorem \ref{CMm}}]  
Let $\left\{u_{n}\right\}$ be a minimizing sequence for the problem $I_{q}$, then we have that $\left\{u_{n}\right\}$ is bounded in $\Sigma (\mathbb{R}^{N})$. Indeed, by the Gagliardo-Nirenberg's inequality \eqref{GNI} and Young's inequality we see that 
\begin{equation*}
\int_{\mathbb{R}^{N}}|x|^{-b}|u_{n}|^{p+1}dx\leq \epsilon\|\nabla u_{n}\|^{\left(\frac{N(p-1)}{2}+b\right)\alpha}_{L^{2}}+C_{\epsilon}\|u_{n}\|^{\left(p+1-\frac{N(p-1)}{2}-b\right)\beta}_{L^{2}},
\end{equation*}
where $C_{\epsilon}>0$ and $1/\alpha+1/\beta=1$. Now choosing $\alpha=\frac{4}{N(p-1)+2b}>1$ (it is due to the assumption $1<p<\frac{4-2b}{N}$), it follows that 
\begin{equation*}
\int_{\mathbb{R}^{N}}|x|^{-b}|u_{n}|^{p+1}dx\leq \epsilon\|\nabla u_{n}\|^{2}_{L^{2}}+C_{\epsilon}q^{\left(p+1-\frac{N(p-1)}{2}-b\right)\beta}.
\end{equation*}
Eventually, we get
\begin{equation*}
E(u_{n})\geq \left(\frac{1}{2}-\frac{\epsilon}{p+1}\right)\|\nabla u_{n}\|^{2}_{L^{2}}+\frac{\gamma^{2}}{2}\|x\, u_{n}\|^{2}_{L^{2}}-\frac{C_{\epsilon}}{p+1}q^{\left(p+1-\frac{N(p-1)}{2}-b\right)\beta}.
\end{equation*}
Taking $\epsilon>0$ sufficiently small, this implies  that $\left\{u_{n}\right\}$ is bounded in $\Sigma (\mathbb{R}^{N})$. Therefore, there exists $u\in \Sigma (\mathbb{R}^{N})$ such that, up to a subsequence, we can suppose that $u_{n}$ converges to $u$ weakly in $\Sigma (\mathbb{R}^{N})$. Since $\Sigma (\mathbb{R}^{N})\hookrightarrow L^{r+1}(\mathbb{R}^{N})$ is compact, it folows that  $u_{n}\rightarrow u$ in $L^{r+1}$ for $1\leq r<1+4/(N-2)$ if $N\geq 3$ and $1\leq r<\infty$ if $N=1$, $2$. 
By \eqref{conver}, we have $P(u_n) \rightarrow P(u_0)$ as $n\rightarrow \infty$. From the lower semi continuity we have  
\begin{equation*}
\|\nabla u\|^{2}_{L^{2}}+\frac{\gamma^{2}}{2}\|x\, u\|^{2}_{L^{2}}\leq \liminf_{n\rightarrow \infty}\left\{\|\nabla u_{n}\|^{2}_{L^{2}}+\frac{\gamma^{2}}{2}\|x\, u_{n}\|^{2}_{L^{2}}\right\}.
\end{equation*}
It follows that $E(u)\leq \liminf_{n\rightarrow \infty}E(u_{n})$ and $\|u\|^{2}_{L^{2}}=q$, which implies that $u$ is a minimizer of $I_{q}$ and $E(u)= \lim_{n\rightarrow \infty}E(u_{n})$; consequently $u_{n}\rightarrow u$ in $\Sigma (\mathbb{R}^{N})$ as $n\rightarrow +\infty$ and $u\in \mathcal{G}_{q}$, which completes the proof of Item (i). By the same argument as in the Theorem \ref{prop GS} we get that there exists a positive and spherically symmetric function such that $u(x)=e^{i\theta_{0}}\varphi(x)$. This concludes the proof.
\end{proof}

\begin{proof}[ \bf {Proof of Theorem \ref{CC}}]  
Let $p=1+\frac{4-2b}{N}$. Assume that $\left\{u_{n}\right\}$ is a minimizing sequence for $I_{q}$ with $q<\|Q\|^{2}_{L^{2}}$. Then $\left\{u_{n}\right\}$ is bounded in $\Sigma (\mathbb{R}^{N})$. Indeed, since $E(u_{n})\leq I_{q}+1$ for $n$ sufficiently large, by \eqref{Lk} we infer that 
\begin{equation*}
\frac{1}{2}\|\nabla u_{n}\|^{2}_{L^{2}}\left(1-\left(\frac{q}{\|Q\|_{L^{2}}}\right)^{\frac{4-2b}{N}}\right)+\frac{\gamma^{2}}{2}\int_{\mathbb{R}^{N}}|x|^{2}| u_{n}|^{2}dx\leq I_{q}+1.
\end{equation*}
Therefore, we have that $\|u_{n}\|_{\Sigma(\mathbb{R}^{N})}$ is bounded. Thus there exists $u\in \Sigma(\mathbb{R}^{N})$ such that $u_{n}\rightharpoonup u$ in $\Sigma(\mathbb{R}^{N})$ and $u_{n}\rightarrow u$ in $L^{r+1}$ for $1\leq r<1+4/(N-2)$ if $N\geq 3$ and $1\leq r<\infty$ if $N=1$, $2$, as $n$ goes to $+\infty$. From here, the proof of Theorem \ref{CC}  is completed exactly as the proof of Theorem \ref{CMm}.
\end{proof}

\section{The Supercritical Case}
\label{S:4}
In this section, we prove  Theorem \ref{Lmp}.  Firstly we give
\begin{lemma}\label{Mlh} 
Let $1+\frac{4-2b}{N}<p<1+\frac{4-2b}{N-2}$.  The following facts hold:\\
(i) $S_{q}\cap B_{r}$ is not empty set iff $q\leq \frac{r}{\gamma N}$.\\
(ii) For any $r>0$, there exists $q_{0}=q_{0}(r)$ such that, for every $q<q_{0}$,
\begin{equation}\label{II}
\inf\left\{E(u),\quad u\in S_{q}\cap B_{rq/2}\right\}< \inf\left\{E(u),\quad u\in S_{q}\cap (B_{r}\setminus B_{rq} )\right\}
\end{equation}
\end{lemma}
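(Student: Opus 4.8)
The plan is to treat (i) as a direct application of the spectral inequality \eqref{Igst}, and then to obtain (ii) by sandwiching the two infima between explicit powers of $q$. For (i), if $u\in S_q\cap B_r$ then \eqref{Igst} gives $\gamma Nq=\gamma N\|u\|_{L^2}^2\le\|u\|_H^2\le r$, so $q\le r/(\gamma N)$. Conversely, I would normalize the ground state \eqref{Efa} by $\phi:=\Phi/\|\Phi\|_{L^2}$, so $\|\phi\|_{L^2}=1$ and, since $\Phi$ saturates \eqref{INF1}, $\|\phi\|_H^2=\gamma N$; then $u_q:=\sqrt q\,\phi$ obeys $\|u_q\|_{L^2}^2=q$ and $\|u_q\|_H^2=\gamma Nq\le r$, so $u_q\in S_q\cap B_r$.

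For (ii), fix $r$ and take $q<1$, so that $B_{rq/2}\subset B_{rq}\subset B_r$; by (i) the inner set $S_q\cap B_{rq/2}$ is non-empty precisely when $r\ge2\gamma N$, which I assume throughout. The upper bound is immediate from the same test function: since $\|u_q\|_H^2=\gamma Nq\le rq/2$ and $P(u_q)\ge0$,
\[
\inf_{S_q\cap B_{rq/2}}E\ \le\ E(u_q)=\tfrac12\gamma Nq-\tfrac{1}{p+1}P(u_q)\ \le\ \tfrac12\gamma Nq .
\]

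The heart of the matter is a matching \emph{lower} bound on the annulus. Write $\theta:=\tfrac{N(p-1)}{2}+b$ and $s:=\tfrac{p+1-\theta}{2}$; in the range $1+\tfrac{4-2b}{N}<p<2^\circ$ one checks $\theta>2$ and $s>0$. For $u\in S_q\cap(B_r\setminus B_{rq})$ set $Y:=\|u\|_H^2\in(rq,r]$. Using \eqref{GNI} together with $\|u\|_{L^2}^2=q$ and $\|\nabla u\|_{L^2}^2\le Y$ I would bound $P(u)\le C\|\nabla u\|_{L^2}^{\theta}q^{s}\le CY^{\theta/2}q^{s}$, whence
\[
E(u)=\tfrac12 Y-\tfrac{1}{p+1}P(u)\ \ge\ Y\Big(\tfrac12-C'q^{s}Y^{\theta/2-1}\Big),\qquad C'=\tfrac{C}{p+1}.
\]
Since $\theta/2-1>0$ and $Y\le r$, the bracket is at least $\tfrac12-C'q^{s}r^{\theta/2-1}\to\tfrac12$ as $q\to0$; as $Y>rq$, this yields
\[
\inf_{S_q\cap(B_r\setminus B_{rq})}E\ \ge\ rq\Big(\tfrac12-C'q^{s}r^{\theta/2-1}\Big).
\]

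Combining the two estimates, the desired inequality holds as soon as $\tfrac12\gamma N<r\big(\tfrac12-C'q^{s}r^{\theta/2-1}\big)$; because $r\ge2\gamma N>\gamma N$ the right side converges to $\tfrac12 r\ge\gamma N$ as $q\to0$, so the inequality is valid for all $q$ below some threshold, which I would take as $q_0(r)$. The step I expect to require the most care is precisely the annulus bound: the naive estimate replacing $Y^{\theta/2}$ by its maximum $r^{\theta/2}$ gives only $P(u)\le Cr^{\theta/2}q^{s}$, and when $s<1$ (which occurs whenever $N\ge2$) this overshoots the positive term $\tfrac12 rq$ and produces a useless negative lower bound. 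Keeping the genuine factor $Y^{\theta/2}$ and using that $Y$ is \emph{small} near the inner edge of the annulus is what rescues the bound, and getting this bookkeeping right is the crux.
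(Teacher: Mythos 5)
Your proof is correct and follows essentially the same route as the paper's: both hinge on the Gagliardo--Nirenberg inequality \eqref{GNI} giving the lower bound $E(u)\geq \frac{1}{2}\|u\|_{H}^{2}-C q^{s}\|u\|_{H}^{\theta}$ on the annulus (the paper's function $\alpha_{q}$, with the same exponents $s=\chi$ and $\theta=2+2\delta$), compared for small $q$ against a trivial upper bound on $S_{q}\cap B_{rq/2}$ (the paper uses $E(u)\leq\frac{1}{2}\|u\|_{H}^{2}\leq\frac{1}{4}rq$ for an arbitrary element, you test with the Gaussian to get the slightly sharper $\frac{1}{2}\gamma N q$). The only substantive difference is that you make explicit the restriction $r\geq 2\gamma N$ needed for $S_{q}\cap B_{rq/2}$ to be nonempty, a point the paper's statement and proof silently assume.
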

\begin{proof}
 We set $\zeta(x):=\sqrt{q}\Phi(x)$, where $\Phi$ is given in \eqref{Efa}. For any $r>0$, if $q\leq \frac{r}{\gamma N}$, it is clear that 
\begin{equation*}
\|\zeta\|^{2}_{L^{2}}=q \quad \text{and} \quad \|\zeta\|^{2}_{H}=\|\nabla\zeta\|^{2}_{L^{2}}+\gamma^{2}\|x\,\zeta\|^{2}_{L^{2}}=\gamma N\|\zeta\|^{2}_{L^{2}}\leq r.
\end{equation*}
Here, the norm $\|\cdot\|^{2}_{H}$ is defined in \eqref{Hn}. Therefore $\zeta\in S_{q}\cap B_{r}$. On the other hand, if $u\in S_{q}\cap B_{r}$, then from \eqref{Igst} we infer
\begin{equation*}
r\geq \|u\|^{2}_{H}=\|\nabla u\|^{2}_{L^{2}}+\gamma^{2}\|x\,u\|^{2}_{L^{2}}\geq \gamma N q,
\end{equation*}
which completes the proof of the statement (i) above. 

Our proof of statement (ii)  is inspired by the one of Lemma 3.1 in \cite{BEBOJEVI2017}. From Gagliardo-Nirenberg inequality \eqref{GNI} we get 
\begin{equation}\label{Ic1}
\begin{cases} 
E(u)\geq \frac{1}{2}\| u\|^{2}_{H}-Cq^{\frac{p+1}{2}-\frac{N(p-1)}{4}-\frac{b}{2}}\| u\|^{\frac{N(p-1)}{2}+{b}}_{H}=\alpha_{q}(\| u\|_{H}),\\
E(u)\leq \frac{1}{2}\| u\|^{2}_{H}=\beta_{q}(\| u\|_{H}),
\end{cases} 
\end{equation}
where 
\begin{equation*} 
\begin{cases}
 \alpha_{q}(t)=\frac{1}{2}t(1-2Cq^{\chi}t^{\delta})\\
\beta_{q}(t)=\frac{1}{2}t
\end{cases} 
\end{equation*}
and
\begin{equation*} 
 \chi=\frac{1}{2}\left(p+1-\frac{N(p-1)}{2}-b\right)>0, \quad \delta=\frac{N(p-1)+2b-4}{4}>0.
\end{equation*}
Note that, by \eqref{Ic1}, to prove \eqref{II}, it suffices to show that there exists $0<q_{0}=q_{0}(r)<<1$ such that, for every $q<q_{0}$,
\begin{equation*} 
\beta_{q}(qr/2) <\inf_{t\in (rq,r)}\alpha_{q}(t).
\end{equation*}
Now since $\alpha_{q}(t)>\frac{5}{16}t$ for $t\in (0, r)$ and $q<q_{0}(r)<<1$, we get
\begin{equation*} 
\beta_{q}(qr/2)=\frac{1}{4}qr<\frac{5}{16}qr\leq\inf_{t\in (rq,r)}\alpha_{q}(t), 
\end{equation*}
which completes the proof of lemma.
\end{proof}

\begin{proof}[ \bf {Proof of Theorem \ref{Lmp}}]  
Suppose that $\left\{u_{n}\right\}$ is a minimizing sequence for $I^{r}_{q}$. Since $\left\{u_{n}\right\}\subset S_{q}\cap B_{r}$, it follows that $\left\{u_{n}\right\}$ is bounded in $\Sigma(\mathbb{R}^{N})$. Then there exists $u\in \Sigma(\mathbb{R}^{N})$ such that $u_{n}\rightharpoonup u$ in $\Sigma(\mathbb{R}^{N})$ and $u_{n}\rightarrow u$ in $L^{2}$ as $n\rightarrow\infty$.
By lower semi-continuity 
\begin{equation*}
\|\nabla u\|^{2}_{L^{2}}+\gamma^{2}\|x\, u\|^{2}_{L^{2}}\leq \liminf_{n\rightarrow \infty}\left\{\|\nabla u_{n}\|^{2}_{L^{2}}+\gamma^{2}\|x\, u_{n}\|^{2}_{L^{2}}\right\},
\end{equation*}
we infer that $u\in S_{q}\cap B_{r}$ and $E(u)\leq \lim_{n\rightarrow \infty} E(u_{n})=I_{q}$. Thus, $u\in \mathcal{G}_{q}$ and $u_{n}\rightarrow u$ in $\Sigma(\mathbb{R}^{N})$. Moreover, by the same argument as in the proof of the Theorem \ref{CMm}, we see that there exist  a real-valued positive function $\varphi$ and $\theta\in \mathbb{R}$ such that $u=e^{i\theta}\varphi$. 

Now, since $\|\varphi^{\ast}\|^{2}_{L^{2}}=\|\varphi\|^{2}_{L^{2}}$ and $\|\nabla\varphi^{\ast}\|^{2}_{L^{2}}\leq\|\nabla\varphi\|^{2}_{L^{2}}$, from  \eqref{S1} we have that $\varphi^{\ast}\in S_{q}\cap B_{r}$. In addition, if we suppose that $\varphi$ is not radial, then by \eqref{S1}-\eqref{S2} we infer that $E(\varphi^{\ast})<E(\varphi)$, which is a contradiction. Therefore $\varphi$ is  radial and radially decreasing,
which completes the proof of the statements (i) and (iii).

Now we prove statement (ii). From Lemma \ref{Mlh} we infer that $\varphi\in B_{rq}$. This implies that $\varphi$ does not belong to the boundary of $S_{q}\cap B_{r}$. Then, we have that $\varphi$ is a critical point of $E$ on $S_{q}$ and there exists a Lagrange multiplier $\omega\in \mathbb{R}$ such that the Euler-Lagrange equation
\begin{equation}\label{Spa}
-\Delta \varphi+\omega\varphi+\gamma^{2}|x|^{2}\varphi-|x|^{-b}|\varphi|^{p-1}\varphi=0,
\end{equation}
holds. 

Let $\zeta$ be the eigenfunction defined in \eqref{Efa} such that $\|\zeta\|^{2}_{L^{2}}=q$. Then $\zeta\in S_{q}\cap B_{r}$ and
\begin{equation*}
I^{r}_{q}\leq E(\zeta)=\frac{1}{2}\gamma N q-\frac{1}{p+1}\int_{\mathbb{R}^{N}}|x|^{-b}|\zeta|^{p+1}dx<\frac{1}{2}\gamma N q.
\end{equation*}
Thus, from \eqref{Spa} we see that
\begin{align}\nonumber
\omega \|\varphi\|^{2}_{L^{2}}&=- \|\nabla \varphi\|^{2}_{L^{2}}-\gamma^{2}\|x\, \varphi\|^{2}_{L^{2}}+\int_{\mathbb{R}^{N}}|x|^{-b}|\varphi|^{p+1} dx\\\label{Pe1}
&=-2I^{r}_{q}+\frac{p-1}{p+1}\int_{\mathbb{R}^{N}}|x|^{-b}|\varphi|^{p+1} dx> -\gamma N q.
\end{align}
Therefore $\omega>-\gamma N$. Now, from \eqref{GNI}  we obtain

\begin{align*}
\omega \|\varphi\|^{2}_{L^{2}}&= -\|\nabla \varphi\|^{2}_{L^{2}}-\gamma^{2}\|x\, \varphi\|^{2}_{L^{2}}+\int_{\mathbb{R}^{N}}|x|^{-b}|\varphi|^{p+1} dx\\
&\leq -\|\varphi\|^{2}_{H}+C\|\varphi\|^{\frac{N(p-1)}{2}+b}_{H}q^{\frac{p+1}{2}-\frac{N(p-1)}{4}-\frac{b}{2}}\\
&=-\|\varphi\|^{2}_{H}\left(1-C\|\varphi\|^{\frac{N(p-1)}{2}+b-2}_{H}q^{\frac{p+1}{2}-\frac{N(p-1)}{4}-\frac{b}{2}}\right)\\
&\leq -\|\varphi\|^{2}_{H}\left(1-C(rq)^{\frac{N(p-1)}{4}+\frac{b}{2}-1}q^{\frac{p+1}{2}-\frac{N(p-1)}{4}-\frac{b}{2}}\right)\\
&\leq-\|\varphi\|^{2}_{H}\left(1-Cq^{\frac{p-1}{2}}\right),
\end{align*}
and with \eqref{Igst} we obtain 
\begin{equation*}
\omega \leq -\gamma N\left(1-Cq^{\frac{p-1}{2}}\right).
\end{equation*}
This completes the proof of theorem.
\end{proof}

\section{Orbital stability}
\label{S:5}
This section is devoted to the proof of Theorems \ref{Et} and \ref{INSF}. 

\begin{proof}[ \bf {Proof of Theorem \ref{Et}}]  
We only consider the supercritical case $1+\frac{4-2b}{N}<p<1+\frac{4-2b}{N-2}$, the proof in the other cases, when $1<p\leq 1+\frac{4-2b}{N}$, is similar.
We verify the statement of Theorem \ref{Et} (iii) by contradiction. Then we have that there exist $\epsilon>0$ and two sequences $\left\{u_{0,n}\right\}\subset \Sigma(\mathbb{R}^{N})$ and $\left\{t_{n}\right\}\subset \mathbb{R}$ such that 
\begin{align}\label{L3}
&\inf_{\varphi\in \mathcal{G}^{r}_{q}}\|u_{0, n}-\varphi\|_{\Sigma(\mathbb{R}^{N})}\rightarrow 0 \quad \text{as $n\rightarrow +\infty$} \\\label{L4}
&\sup_{t\in \mathbb{R}}\inf_{\varphi\in \mathcal{G}^{r}_{q}}\|u_{n}(t_{n})-\varphi\|_{\Sigma(\mathbb{R}^{N})}\geq \epsilon\quad \text{ for every $n\in\mathbb{N}$.} 
\end{align}
Here $u_{n}(t)$ is the maximal solution of \eqref{GP} with initial datum $u_{0, n}$. Without loss of generality, we may assume that $\|u_{0, n}\|^{2}_{L^{2}}=q$. From \eqref{L3} and the conservation of charge and energy we infer that
\begin{align*}
& \|u_{n}(t_{n})\|^{2}_{L^{2}}=\|u_{0, n}\|^{2}_{L^{2}}=q  \quad \text{ for every $n$,} \\
& E(u_{n}(t_{n}))=E(u_{0, n})\rightarrow I^{r}_{q} \quad \text{ as $n\rightarrow +\infty$.}
\end{align*}
We claim that there exists a  subsequence $\left\{u_{n_{k}}(t_{n_{k}})\right\}$ of $\left\{u_{n}(t_{n})\right\}$ such that $\|u_{n_{k}}(t_{n_{k}})\|^{2}_{H}\leq r$. Indeed, suppose that there exists $K\geq 1$ such that  $\|u_{n}(t_{n})\|^{2}_{H}> r$ for every $n\geq K$. By continuity, there exists $t_{n}^{\ast}\in (0, t_{n})$ such that $\|u_{n}(t^{\ast}_{n})\|^{2}_{H}= r$.  Since $\|u_{n}(t^{\ast}_{n})\|^{2}_{L^{2}}=q$, $\|u_{n}(t^{\ast}_{n})\|^{2}_{H}= r$ and $E(u_{n}(t^{\ast}_{n}))=E(u_{0, n})\rightarrow I^{r}_{q}$ as $n\rightarrow +\infty$, it follows that
$\left\{u_{n}(t^{\ast}_{n})\right\}$ is a minimizing sequence of $I^{r}_{q}$. From Theorem \ref{Lmp}, we infer that there exists $\psi\in \Sigma(\mathbb{R}^{N})$ such that $\|\psi\|^{2}_{L^{2}}=q$, $\|\psi\|^{2}_{H}= r$ and $E(\psi)=I^{r}_{q}$, which is a contradiction with Lemma \ref{Mlh} (ii), because the critical point $\psi$ does not belong to the boundary of $S_{q}\cap B_{r}$. Therefore, there exists a  subsequence $\left\{u_{n_{k}}(t_{n_{k}})\right\}$ such that $\|u_{n_{k}}(t_{n_{k}})\|^{2}_{H}\leq r$ for all $k\geq1$. In particular, $\left\{u_{n_{k}}(t_{n_{k}})\right\}$ is  a minimizing sequence for $I^{r}_{q}$. Again from Theorem \ref{Lmp} we obtain, passing to a subsequence if necessary,
\begin{equation*}
\inf_{\varphi\in \mathcal{G}^{r}_{q}}\|u_{n_{k}}(t_{n_{k}})-\varphi\|_{\Sigma(\mathbb{R}^{N})}\rightarrow 0 \quad \text{as $k\rightarrow +\infty$}, 
\end{equation*}
which is a contradiction with \eqref{L4} and finishes the proof.
\end{proof}

 Next we study the instability of standing waves for \eqref{GP} in the $L^2$-critical and $L^2$-supercritical cases. 

\begin{proof}[ \bf {Proof of Theorem \ref{INSF}}]
	Since $d_n \geq d_\omega$, we have $d=d_\omega$. From Lemma \ref{lem poho}, $K_\omega(\varphi) = I(\varphi) =0$. Set $\varphi^\lambda(x) = \lambda \varphi(x)$. Since 
	\begin{align*}
	K_\omega(\varphi^\lambda) &= \lambda^2 \|\varphi\|^2_{H_\omega} - \lambda^{p+1} P(\varphi), \\
	I(\varphi^\lambda) &= \lambda^2 (\|\nabla \varphi\|^2_{L^2} - \gamma^2 \|x\varphi\|^2_{L^2}) - \frac{N(p-1)+2b}{2(p+1)} \lambda^{p+1} P(\varphi),
	\end{align*}
	it is easy to see that the equations $K_\omega(\varphi^\lambda) =0$ and $I(\varphi^\lambda)=0$ have unique non-zero solution $\lambda_0=1$. It follows that for any $\lambda>1$, 
	\[
	K_\omega(\varphi^\lambda)<0, \quad I(\varphi^\lambda)<0. 
	\]
	On the other hand, we notice that $\frac{d}{d\lambda} S_\omega(\varphi^\lambda) = \lambda^{-1} K_\omega(\varphi^\lambda)$. Thus, $S_\omega(\varphi^\lambda) < S_\omega(\varphi)$ for any $\lambda>1$. Since $S_\omega(\varphi) = d_\omega =d$, we see that for any $\lambda>1$, $S_\omega(\varphi^\lambda)<d, K_\omega(\varphi^\lambda) <0, I_\omega(\varphi^\lambda)<0$. This implies that $\varphi^\lambda \in K_-$ for any $\lambda>1$. Now let $\epsilon>0$. We take $\lambda_1>1$ sufficiently close to 1 such that
	\[
	\|\varphi^{\lambda_1} - \varphi\|_{\Sigma} = (\lambda_1 - 1) \|\varphi\|_{\Sigma} < \epsilon.
	\] 
	Set $u_0 = \varphi^{\lambda_1}$, we see that $u_0 \in K_-$. By Proposition \ref{prop blowup}, the corresponding solution blows up in finite time. Thus we obtain the proof of statement i) of theorem.

Next we prove ii). In this case $d= d_n< d_\omega$. Since $u\in \mathcal{M}_{\omega}$, we have
	\[
	S_\omega(\varphi^\lambda) < S_\omega(\varphi) = d_\omega
	\]
	for any $\lambda>1$. Since $\frac{d}{d\lambda}S_\omega(\varphi^\lambda) = \lambda^{-1} K_\omega(\varphi^\lambda)$ and since $K_\omega(\varphi) =0$, we have $\frac{d}{d\lambda} S_\omega(\varphi^\lambda)<0$ for any $\lambda>1$. On the other hand, $S_\omega(\varphi) =d, S_\omega(\varphi^\lambda) \rightarrow -\infty$ as $\lambda \rightarrow \infty$. Thus, there exists $\lambda_0>1$ such that $S_\omega (\varphi^\lambda) < S_\omega(\varphi^{\lambda_0}) = d$ as $\lambda>\lambda_0$. It follows that $S_\omega(\varphi^\lambda) <d, K_\omega(\varphi^\lambda) <0, I(\varphi^\lambda)<0$ for any $\lambda>\lambda_0$ or $\varphi^\lambda \in K_-$ for any $\lambda>\lambda_0$. Taking $\delta = (\lambda_0 - 1) \|\varphi\|_{\Sigma}$ and choose $u_0 = \varphi^{\lambda_1}$ for some $\lambda_1>\lambda_0$, the result follows.
\end{proof}

\section{Appendix}
\label{S:8}
In this appendix we show the uniqueness result for \eqref{Sp}. More specifically,  if $N\geq 3$,  $0 < b < 1$  and $1 < p < 1 + \frac{4-2b}{N-2}$, then for any $\omega> -\gamma\, N$ there exists a unique positive radial solution of \eqref{Sp}.
 
Through this appendix we assume that $N\geq 3$,  $0 < b < 1$  and $1 < p < 1 + \frac{4-2b}{N-2}$. 

In \cite[Theorem 1]{NSKW}, Shioji and Watanabe  give a uniqueness result for positive radial solutions of
\begin{equation*}
\varphi^{\prime\prime}+\frac{N-2}{r}\varphi^{\prime}+g(r)\varphi+h(r)\varphi^{p}=0 \quad \text{in $(0, +\infty)$,}
\end{equation*}
under  appropriate assumptions on $g(r)$ and $h(r)$. Note that for our case, Eq \eqref{Sp}, we have that $g(r)=-(\omega+\gamma^{2}r^{2})$ and $h(r)=r^{-b}$.

Required conditions in \cite[Theorem 1]{NSKW} are following.\\
(I) $g\in C^{1}((0, +\infty))$, $h\in C^{3}((0, +\infty))$; $g(r)>0$, $h(r)>0$ for every $r\in (0, +\infty)$.\\
(II) $\lim_{r\rightarrow 0}\frac{1}{r^{N-1}}\int^{r}_{0}\tau^{N-1}\left[g(\tau)+h(\tau)\right]\,d\tau=0.$\\
(III) There exists $r^{\ast}\in (0, +\infty)$ such that\\
(i) $r^{N-1}g(r)\in L^{1}((0, r^{\ast}))$, $r^{N-1}h(r)\in L^{1}((0, r^{\ast}))$.\\
(ii) $\tau^{N-1}\left(g(\tau)+h(\tau)\right)\left(\frac{(r^{\ast})^{2-N}-\tau^{2-N}}{2-N}\right)\in  L^{1}((0, r^{\ast}))$.\\
(IV) $\lim_{r\rightarrow 0}a(r)<+\infty$, $\lim_{r\rightarrow 0}|\beta(r)|<+\infty$, $\lim_{r\rightarrow 0}c(r)\in [0,+\infty]$, $\lim_{r\rightarrow 0}a(r)g(r)=0$ and  $\lim_{r\rightarrow 0}a(r)h(r)=0$, where
\begin{align*}
a(r)=& r^{\frac{2(N-1)(p+1)}{p+3}}h(r)^{-\frac{2}{p+3}}, \\
\beta(r)=& -\frac{1}{2}a^{\prime}(r)+\frac{N-1}{r}a(r),\\
c(r)=& -\beta^{\prime}(r)+\frac{N-1}{r}\beta(r).
\end{align*}
(V) There exists $k\in [0,+\infty)$ such that
\begin{align*}
G(r)> 0 \,\,\, \text{on $(0, k)$ and }\,\,\, G(r)< 0\,\, \text{ on $(k, +\infty)$,}
\end{align*}
where 
\begin{align*}
G(r)=-\beta(r)g(r)+\frac{1}{2}c^{\prime}(r)+\frac{1}{2}(ag)^{\prime}(r).
\end{align*}
(VI) $G^{-}\neq 0$ is satisfied, where $G^{-}=\min\left\{G(r),0\right\}$ for $r\in (0, +\infty)$.

Next we check the conditions (I)-(VI) to prove the uniqueness of a solution of \eqref{Sp}. Since $N\geq 3$ and  $0 < b < 1$,  it is clear  that the conditions (I)-(III) hold true. For simplicity, we assume that $\gamma=1$. Recalling that $g(r)=-(\omega+r^{2})$ and $h(r)=r^{-b}$, a straightforward calculations give 
\begin{align*}
a(r)=& r^{2\frac{b+(N-1)(p+1)}{p+3}}, \\
\beta(r)=& \frac{1}{p+3}r^{\frac{2b+2N(p+1)-3p-5}{p+3}}\left(2(N-1)-b\right),\\
c(r)=& \frac{1}{(p+3)^{2}}r^{\frac{2(b+N(p+1)-2(p+2))}{p+3}}\left(b-2-(N-1)\right)\left(2b+N-(p-1)-2(p+1)\right),
\end{align*}
and 
\begin{align*}
G(r)=A\,r^{2}+B\,r+C,
\end{align*}
where
\begin{align*}
A=& -(p+3)^{2}(2b+N(p-1)+4) \\
B=& \omega(p+3)^{2}(2N-(2+b))\\
C=& (b-2N+2)(p(N-2)+b+N-4)(p(N-2)+2b-N-2).
\end{align*}
Since $N\geq 3$,  $0 < b < 1$  and $1 < p < 1 + \frac{4-2b}{N-2}$,  it is not hard to show that  (IV)-(VI) hold true. In particular, we obtain $A<0$ and $C\geq0$, thus we can find that there exists $k\in [0,+\infty)$ such that $G(r)> 0$ on $(0, k)$ and  $G(r)< 0$ on $(k, +\infty)$. Hence by \cite[Theorem 1]{NSKW} we see that  there exists a unique positive radial solution of \eqref{Sp}.


\begin{thebibliography}{10}

\bibitem{AGR}
G.~P. Agrawal.
\newblock {\em Nonlinear Fiber Optics}.
\newblock Academic Press, 2007.

\bibitem{BP}
G.~Baym and C.~J. Pethick.
\newblock Ground state properties of magnetically trapped {B}ose-{E}instein
  condensate rubidium gas.
\newblock {\em Phys. Rev. Lett.}, 76(6-9), 1996.

\bibitem{BEBOJEVI2017}
J.~Bellazzini, N.~Boussaid, L.~Jeanjean, and N.~Visciglia.
\newblock Existence and stability of standing waves for supercritical {NLS}
  with a partial confinement.
\newblock {\em Comm. Math. Physics}, 353(1):229--339, 2017.

\bibitem{RCIH}
R.~Carles.
\newblock Critical nonlinear {S}chr\"odinger equations with and without
  harmonic potential.
\newblock {\em Math. Models Methods Appl. Sci.}, 12(10):1513--1523, 2002.

\bibitem{Carles2002}
R.~Carles.
\newblock Remarks on the nonlinear {S}chr\"odinger equation with harmonic
  potential.
\newblock {\em Ann. Henri Poincar\'e}, 3:757--772, 2002.

\bibitem{CB}
T.~Cazenave.
\newblock {\em Semilinear {S}chr\"odinger Equations}.
\newblock Courant Lecture Notes in Mathematics,10. American Mathematical
  Society, Courant Institute of Mathematical Sciences, 2003.

\bibitem{JC}
J.~Chen.
\newblock On a class of nonlinear inhomogeneous {S}chr\"odinger equations.
\newblock {\em J. Appl. Math. Comput.}, 32:237--253, 2010.

\bibitem{CGO}
J.~Chen and B.~Guo.
\newblock {S}harp global existence and blowing up results for inhomogeneous
  {S}chr\"odinger equations.
\newblock {\em Discrete Contin. Dyn. Syst. Ser. B}, 8:357--367, 2007.

\bibitem{Chen2010}
Jianqing Chen.
\newblock On the inhomogeneous nonlinear {S}chr{\"o}dinger equation with
  harmonic potential and unbounded coefficient.
\newblock {\em Czechos. Math. J.}, 60(3):715--736, 2010.

\bibitem{VCGG}
V.~Combet and F.~Genoud.
\newblock Classification of minimal mass blow-up solutions for an ${L}^{2}$
  critical inhomogeneous {NLS}.
\newblock {\em J. Evol. Equ.}, 16:483--500, 2016.

\bibitem{ABRF}
A.~de~Bouard and R.~Fukuizumi.
\newblock Stability of standing waves for nonlinear {S}chr\"odinger equations
  with inhomogeneous nonlinearities.
\newblock {\em Ann. Henri Poincar\'e}, 6:1157--1177, 2005.

\bibitem{Dinh}
V.~D. Dinh.
\newblock Blowup of ${H}^1$ solutions for a class of the focusing inhomogeneous
  nonlinear {S}chr\"odinger equation.
\newblock {\em Nonlinear Analysis}, 174:169--188, 2018.

\bibitem{LGF}
L.~G. Farah.
\newblock Global well-posedness and blow-up on the energy space for the
  inhomogeneous nonlinear {S}chr\"odinger equation.
\newblock {\em J. Evol. Equ.}, 16(1):193--208, 2016.

\bibitem{BFe}
B.~Feng.
\newblock Sharp threshold of global existence and instability of standing wave for the Schr\"odinger-Hartree equation with harmonic potential
\newblock {\em Nonlinear Anal. Real World Appl.}, 31:132--145, 2016.

\bibitem{Fukui2000}
R.~Fukuizumi.
\newblock Stability and instability of standing waves for the {S}chr\"odinger
  equation with harmonic potential.
\newblock {\em Discrete Contin. Dynam. Systems}, 7:525--544, 2000.

\bibitem{FOG}
R.~Fukuizumi.
\newblock Stability of standing waves for nonlinear {\Large s}chr\"odinger
  equations with critical power nonlinearity and potentials.
\newblock {\em Advances in Differential Equations}, 10(2):259--276, 2005.

\bibitem{STW}
R.~Fukuizumi and M.~Ohta.
\newblock Stability of standing waves for nonlinear { S}chr\"odinger equations
  with potentials.
\newblock {\em Differential and Integral Equations}, 16(1):111--128, 2003.

\bibitem{OhFu2005}
R.~Fukuizumi and M.~Ohta.
\newblock Instability of standing waves for nonlinear {S}chr\"odinger equations
  with inhomogeneous nonlinearities.
\newblock {\em J. Math. Kyoto University}, 45:145--158, 2005.

\bibitem{FG1}
F.~Genoud.
\newblock An inhomogeneous, ${L}^2$-critical, nonlinear {S}chr\"odinger
  equation.
\newblock {\em Z. Anal. Anwend.}, 31(3):283--290, 2012.

\bibitem{FGCS}
F.~Genoud and C.~Stuart.
\newblock {S}chr\"odinger equations with a spatially decaying nonlinearity:
  existence and stability of standing waves.
\newblock {\em Discrete Contin. Dyn. Syst.}, 21:137--186, 2008.

\bibitem{MGM}
C.~M. Guzm\'an.
\newblock On well posedness for the inhomogneous nonlinear {S}chr\"odinger
  equation.
\newblock {\em Nonlinear Anal.}, 37:249--286, 2017.

\bibitem{HHa}
H.~Hajaiej.
\newblock Cases of equality and strict inequality in the extended
  hardy-littlewood inequalities.
\newblock {\em Proc. Roy. Soc. Edinburgh Sect. A}, 135(3):643--661, 2005.

\bibitem{Luo2018}
X.~Luo.
\newblock Stability and multiplicity of standing waves for the inhomogeneous
  {NLS} equation with a harmonic potential.
\newblock {\em Nonlinear Anal. Real World Appl.}, 45:688--703, 2019.

\bibitem{Saanouni2015}
T.~Saanouni.
\newblock Global well-posedness and instability of an inhomogeneous nonlinear
  {S}chr{\"o}dinger equation.
\newblock {\em Med. J. Math.}, 12(2):387--417, 2015.

\bibitem{TSaa}
T.~Saanouni.
\newblock Remarks on the inhomogeneous fractional nonlinear {S}chr\"odinger
  equation.
\newblock {\em J. Math Phys.}, 57(8):081503, 2016.

\bibitem{NSKW}
N.~Shioji and K.Watanabe.
\newblock A generalized pohozaev identity and uniqueness of positive radial
  solutions of $ {\Delta} u+g(r)u+h(r)u^{p}=0$.
\newblock {\em J. Differential Equations}, 255:4448--4475., 2013.

\bibitem{TaoC}
T.~Tao.
\newblock A pseudoconformal compactification of the nonlinear {S}chr\"odinger
  equation and applications,.
\newblock {\em New York J. Math}, 15:265--282, 2009.

\bibitem{ZEc}
J.~Zhang.
\newblock Stability of attractive {B}ose-{E}instein condensates.
\newblock {\em J. Statist. Phys.}, 101(3/4):731--746, 2000.

\bibitem{JZ2000}
J.~Zhang.
\newblock Stability of standing waves for nonlinear {S}chr\"odinger equations
  with unbounded potentials.
\newblock {\em Z. Angew. Math. Phys.}, 51:498--503, 2000.

\bibitem{JZ2SS005}
J.~Zhang.
\newblock Sharp threshold for global existence and blowup in nonlinear
  {S}chr\"odinger equation with harmonic potential.
\newblock {\em Commun. Partial Differ. Equ.}, 30:1429--1443, 2005.

\bibitem{SZ}
S.~Zhu.
\newblock Blow-up solutions for the inhomogeneous {S}chr\"odinger equation with
  ${L}^{2}$ supercritical nonlinearity.
\newblock {\em J. Math. Anal. Appl.}, 409:760--776, 2014.

\end{thebibliography}

\end{document}